\renewcommand\eqref[1]{(\ref{#1})} %Need with hyperref
\newtheorem{theorem}{Theorem}[section]
\newtheorem{lemma}[theorem]{Lemma}
\newtheorem{corollary}[theorem]{Corollary}
\theoremstyle{definition}
\newtheorem{definition}[theorem]{Definition}
\theoremstyle{remark}
\newtheorem{remark}[theorem]{Remark}
\numberwithin{equation}{section}
\def\ind{{\mathcal I}}
\def\M{\overline{M}}
\begin{document}
	\setcounter{page}{1}
	
\title[Bounded global operators on smooth manifolds]{ $L^p$-$L^q$ boundedness of  pseudo-differential operators on smooth manifolds and its applications to nonlinear equations}
	
	\author[D. Cardona ]{Duv\'an Cardona S\'anchez}
	\address{
		Duv\'an Cardona S\'anchez:
		\endgraf
		Department of Mathematics: Analysis Logic and Discrete Mathematics
		\endgraf
		Ghent University, Belgium
		\endgraf
		{\it E-mail address} {\rm duvanc306@gmail.com}
		}
	
	\author[V. Kumar]{Vishvesh Kumar}
	\address{
		Vishvesh Kumar:
		\endgraf
		Department of Mathematics: Analysis Logic and Discrete Mathematics
		\endgraf
		Ghent University, Belgium
		\endgraf
		{\it E-mail address} {\rm vishveshmishra@gmail.com}
		\endgraf
	}

\author[M. Ruzhansky]{Michael Ruzhansky}
\address{
  Michael Ruzhansky:
  \endgraf
  Department of Mathematics: Analysis, Logic and Discrete Mathematics
  \endgraf
  Ghent University, Belgium
  \endgraf
 and
  \endgraf
  School of Mathematics
  \endgraf
  Queen Mary University of London
  \endgraf
  United Kingdom
  \endgraf
  {\it E-mail address} {\rm michael.ruzhansky@ugent.be}
  }

\author[N. Tokmagambetov]{Niyaz Tokmagambetov}
\address{
  Niyaz Tokmagambetov:
  \endgraf
  Department of Mathematics: Analysis, Logic and Discrete Mathematics
  \endgraf
  Ghent University, Belgium
  \endgraf
  and
  \endgraf
  Al--Farabi Kazakh National University
  \endgraf
  71 al--Farabi ave., Almaty, Kazakhstan
  \endgraf
  {\it E-mail address} {\rm niyaz.tokmagambetov@ugent.be}
  }

\thanks{The authors were supported by the FWO Odysseus 1 grant G.0H94.18N: Analysis and Partial Differential Equations. MR was also supported in parts by the EPSRC Grant EP/R003025/1, by the Leverhulme Research Grant RPG-2017-151.}

	%Tusi Mathematical Research Group (TMRG), Mashhad, Iran.}
	%\email{\textcolor[rgb]{0.00,0.00,0.84}{second@afa.ac.ir}}

	\subjclass[2010]{ 58J40; Secondary 47B10, 47G30, 35S30}
	
	\keywords{Pseudo-differential operator, nonharmonic analysis, manifold, Hausdorff-Young-Paley inequality, multiplier, boundedness, non-linear partial differential equation}

	\begin{abstract} In this paper we study the boundedness of global pseudo-differential operators on smooth manifolds. By using the notion of global symbol we extend a classical condition of H\"ormander type to guarantee the $L^p$-$L^q$-boundedness of global operators. First we investigate $L^p$-boundedness of pseudo-differential operators in  view of the   H\"ormander-Mihlin condition. We also prove $L^\infty$-$BMO$ estimates for pseudo-differential operators. Later, we concentrate our investigation to settle $L^p$-$L^q$ boundedness of the Fourier multipliers and pseudo-differential operators for the range $1<p \leq 2 \leq q<\infty.$ On the way to achieve our goal of  $L^p$-$L^q$ boundedness we prove two classical inequalities, namely, Paley inequality and Hausdorff-Young-Paley inequality for smooth manifolds. Finally, we present the applications of our boundedness theorems to the well-posedness properties of different types of the nonlinear partial differential equations. \\
	%\textbf{MSC 2010.} Primary {58J40; Secondary 47B10, 47G30, 35S30}.
	\end{abstract} \maketitle
	
	\tableofcontents
	
\section{Introduction}

In this paper we investigate classical conditions for the boundedness of multipliers and more generally,  pseudo-differential operators in the context of the Fourier analysis arising from the spectral decomposition of a model operator $L$ on a smooth manifold $\overline{M}$ (which can be closed or  with smooth boundary). To explain the results in this paper, let us recall the following classical results of Fourier analysis. If $\mathscr{F}:L^2(\mathbb{R}^n)\rightarrow L^2(\mathbb{R}^n)$ is the Fourier transform on $\mathbb{R}^n,$
\begin{equation}
    (\mathscr{F}f)(\xi):=\int\limits_{\mathbb{R}^n}e^{-i2\pi x\cdot \xi}f(x)dx,\,\,\xi\in \mathbb{R}^n,\,\,f\in C^{\infty}_{0}(\mathbb{R}^n),
\end{equation}
the function $m$ is measurable  on $\mathbb{R}^n,$ and the function $\psi\in C_{0}(\mathbb{R}^n)$ is a test function, under the following conditions,
\begin{itemize}
    \item[1.](H\"ormander Mihlin Condition) \begin{equation}\label{1}
    \Vert m \Vert_{l. u.  \mathcal{H}^s }=\sup_{r>0}r^{(s-\frac{n}{2})}\Vert \langle \,\cdot\,\rangle^{s}\mathscr{F}[m(\cdot)\psi({r^{-1}|\cdot|})]\Vert_{L^2(\mathbb{R}^n)}<\infty,\,\,\,s>n/2,
\end{equation}
\item[2.](Paley-type inequality) 
\begin{equation}\label{2}
    M_{\phi}:=\sup_{t>0}t\{\xi\in \mathbb{R}^n:\phi(\xi)\geq t\}<\infty,
\end{equation} 
\end{itemize}the operators $T_m$ and $T$ defined by
\begin{itemize}
    \item[1'.] \begin{equation}\label{FM}
    T_{m}f(x):=\int\limits_{\mathbb{R}^n}e^{i2\pi x\cdot \xi}m(\xi)(\mathscr{F}f)(\xi)d\xi,\,\,f\in C^\infty_0(\mathbb{R}^n),
\end{equation}
\item[2'.]   
\begin{equation}
   Tf(\xi):=(\mathscr{F}u)(\xi)\phi(\xi)^{2(\frac{1}{p}-\frac{1}{2})},\,f\in C^\infty_0(\mathbb{R}^n),
\end{equation} 
\end{itemize}admit bounded extensions $T_m:L^p(\mathbb{R}^n)\rightarrow L^p(\mathbb{R}^n),$ for  $1<p<\infty,$ and $T:L^p(\mathbb{R}^n)\rightarrow L^p(\mathbb{R}^n),$ when $1<p\leq 2.$
These two classical results are due to H\"ormander (see \cite[pages 105 and 120]{Hormander1960}). So, the H\"ormander Mihlin Condition assures the $L^p$-boundedness of multipliers of the Fourier transform, while, the Paley-type inequality describes the growth of the Fourier transform of a function in terms of its $L^p$-norm. Interpolating the Paley-inequality  with the Hausdorff-Young inequality one can obtain the following H\"ormander's version of the  Hausdorff-Young-Paley inequality,
\begin{equation}\label{3}
    \left(\int\limits_{\mathbb{R}^n}|(\mathscr{F}f)(\xi)\phi(\xi)^{ \frac{1}{r}-\frac{1}{p'} }|^rd\xi\right)^{\frac{1}{r}}\leq \Vert f \Vert_{L^p(\mathbb{R}^n)},\,\,\,1<p\leq r\leq p'<\infty, \,\,1<p<2.
\end{equation} Also, as a consequence  of the Hausdorff-Young-Paley inequality, H\"ormander \cite[page 106]{Hormander1960} proves that the condition 
\begin{equation}\label{4}
    \sup_{t>0}t^b\{\xi\in \mathbb{R}^n:m(\xi)\geq t\}<\infty,\quad  \frac{1}{p}-\frac{1}{q}=\frac{1}{b},
\end{equation}where $1<p\leq 2\leq q<\infty,$ implies the existence of a bounded extension of $T_{m}:L^p(\mathbb{R}^n)\rightarrow L^q(\mathbb{R}^n).$ The aim of this paper is to extend these results to the case of smooth-manifolds, by using the Fourier analysis associated to a model operator $L$ on $\overline{M}.$  To formulate our results more precisely, let $L$ be a pseudo-differential operator of order $m$ on the interior $M$ of $\overline{M}$ in the sense of H\"ormander. This means that in every coordinate chart on the interior $M,$ $L$ agrees with a pseudo-differential operator of order $m$ in some open subset of $\mathbb{R}^{\dim(M)}.$
 
 We assume that some boundary conditions called (BC) are fixed and lead to a discrete spectrum with a family of eigenfunctions yielding a Riesz basis in $L^2(\overline{M})$. However, it is important to point out that
 the operator $L$ does not have to be self-adjoint or an elliptic differential operator. 
 For a discussion on general bi-orthogonal systems we refer the reader to Bari \cite{bari}
 and  Gelfand \cite{Gelfand:on-Bari}. Now we formulate our
 assumptions precisely. The discrete set of eigenvalues and eigenfunctions will be indexed by a countable set $\mathcal{I}$. 
We consider the spectrum $\{\lambda_{\xi}\in\mathbb{C}:\xi\in\mathcal{I}\}$ of $L$ with corresponding eigenfunctions in $L^2(M)$ denoted by $u_{\xi}$, i.e.
\begin{equation}\label{EQ:LL}
Lu_{\xi}=\lambda_{\xi}u_{\xi} \mbox{ in }M,\, \mbox{ for all }\xi\in\mathcal{I},
\end{equation}
and the eigenfunctions $u_{\xi}$ satisfy the boundary conditions (BC). We can think of (BC) as defining the domain of the operator $L.$ The conjugate spectral problem is
\[L^*v_{\xi}=\overline{\lambda_{\xi}}v_{\xi} \mbox{ in }M,\, \mbox{ for all }\xi\in\mathcal{I},\]
which we equip with the conjugate boundary conditions $\mathrm{(BC)}^*$. We assume that the functions $u_{\xi}, v_{\xi}$ are normalised, i.e.
 $\|u_{\xi}\|_{L^2}=\|v_{\xi}\|_{L^2}=1$ for all $\xi\in \mathcal{I}$. Moreover, we can take biorthogonal systems $\{u_{\xi}\}_{\xi\in\mathcal{I}}$
and $\{v_{\xi}\}_{\xi\in\mathcal{I}}$, i.e. 
$ (u_{\xi}, v_{\eta})_{L^2}=0 \mbox{ for }\xi\neq\eta, \,\mbox{ and }\, (u_{\xi}, v_{\eta})_{L^2}=1 \mbox{ for }\xi=\eta,$
where
$$ (f,g)_{L^2}=\int\limits_M f(x)\overline{g(x)}dx $$
is the usual inner product of the Hilbert space $L^2(M)$. We also assume that
 the system $\{u_{\xi}\}$ is a Riesz basis of $L^2(\overline{M})$, i.e. for every $f\in L^2(\overline{M})$ there exists a unique series $\sum_{\xi\in\mathcal{I}}a_{\xi}u_{\xi}$ that converges to $f$ in $L^2(\overline{M})$. It is well known that (cf. \cite{bari}) the system $\{u_{\xi}\}$ is a basis of $L^2(\overline{M})$ if and only if the system $\{v_{\xi}\}$ is a basis of $L^2(\overline{M})$.
Our analysis will be based on the quantization process carried by  the non-harmonic analysis developed in \cite{Ruz-Tok,ProfRuzM:TokN:20017}.  So, if $C^\infty_L(\overline{M}):=\cap_{k=1}^\infty \textnormal{Dom}(L^k),$ an $L$-pseudo-differential operator is a continuous linear operator $A:C^\infty_L(\overline{M})\rightarrow C^\infty_L(\overline{M}), $   defined by
\begin{equation}\label{qf}
    Af(x)\equiv T_mf(x):=\sum_{\xi\in \mathcal{I}}u_\xi(x)m(x,\xi)(\mathcal{F}_{L}f)(\xi),\,\,\,f\in C^\infty_L(\overline{M}).
\end{equation} The $L$-symbol of $A$ is the function $m:\overline{M}\times \mathcal{I}\rightarrow\mathbb{C},$ and $\mathcal{F}_{L}f$ is the $L$-Fourier transform of $f$ at $\xi\in \mathcal{I},$ which is defined via,
$$\widehat{f}(\xi)\equiv (\mathcal{F}_{L}f)(\xi):=\int\limits_{\overline{M}}f(x)\overline{v_{\xi}(x)} dx.$$
In this paper, our main goal is to generalise  the H\"ormander-Mihlin condition \eqref{1}, the Paley inequality \eqref{2}, the Hausdorff-Young-Paley inequality \label{3}, and the weak-$L^b$ condition \eqref{4}, when the Fourier transform is replaced by the $L$-Fourier transform $ \mathcal{F}_L$,  and instead of the  Fourier multipliers defined by \eqref{FM}, we consider pseudo-differential operators of the kind \eqref{qf}. Indeed, our main results can be summarised as follows.
\begin{itemize}
    \item With the notation of Definition \ref{pseudomultiplier}, every $L$-pseudo-differential operator $A,$ can be realised as a pseudo-multiplier of $L$ via \eqref{defipseudom} associating to $A$ a continuous function $\tau_m:\overline{M}\times \mathbb{R}\rightarrow\mathbb{C}$ interpolating the values of the symbol $m$ of $A$ in the variable $\xi\in \mathcal{I},$  in terms of the spectrum of $|L|,$ in such a way that $m(x,\xi)=\tau_{m}(x,\lambda_\xi)$.  In Theorem \ref{Hormandercondition}, we prove that the H\"ormander-Mihlin condition, 
    $$ \Vert \tau_m \Vert_{l. u.  \mathcal{H}^s }=\sup_{r>0,x\in \overline{M}}r^{(s-\frac{Q_m}{2})}\Vert \langle \,\cdot\,\rangle^{s}\mathscr{F}[\tau_m(x,\cdot)\psi({r^{-1}\cdot})]\Vert_{L^2(\mathbb{R})}<\infty, $$ with $s$ large enough, and $\psi$ implies that $A\equiv T_m$ defined by \eqref{qf} admits a bounded extension on $L^p(\overline{M}),$ for all $1<p<\infty.$ This in particular implies that, if $m$ satisfies the  Marcinkiewicz type condition 
    \begin{equation}\label{Marcinkiewicztypecondition}
    \sup_{x\in \overline{M}}|\partial_\omega^{\alpha}\tau_m(x,\omega)|\leq C_{\alpha,\beta}(1+|\omega|)^{-|\alpha|},\,\,\,\omega\in \mathbb{R},
\end{equation} the operator $A\equiv T_m$ in \eqref{qf} admits a bounded extension on $L^p(\overline{M}),$ for all $1<p<\infty.$ Similar conditions are studied in Theorem \ref{bmo} in  the $L^\infty(\overline{M})$-$BMO(\overline{M})$ setting.

\item We prove the following Paley-Inequality (see Theorem \ref{PI}):  Let   $1<p \leq 2,$ and let us assume that   \begin{equation}\label{maxcondition}\sup_{\xi \in \mathcal{I}} \left( \frac{\|v_\xi\|_{L^\infty( \overline{M})}}{\|u_\xi\|_{L^\infty(\overline{M})}} \right)<\infty.\end{equation}  If $\varphi(\xi)$ is a positive sequence in $\mathcal{I}$ such that 
 $$M_\varphi:= \sup_{t>0} t  \sum_{\overset{\xi \in \mathcal{I}}{t \leq \varphi(\xi) }}  \|u_\xi\|^2_{L^\infty(\overline{M})}    $$
is finite, then for every $f \in {L^p(\overline{M})}$ we have
\begin{equation} 
     \left( \sum_{\xi \in \mathcal{I}} |\mathcal{F}_L(f)(\xi)|^p \|u_{\xi}\|_{L^\infty(\overline{M})}^{2-p}  \varphi(\xi)^{2-p}   \right)^{\frac{1}{p}} \lesssim M_\varphi^{\frac{2-p}{p}} \|f\|_{L^p(\overline{M})}.
\end{equation} 
\item Assuming \eqref{maxcondition}, the Hausdorff-Young-Paley inequality (see Theorem \ref{HYP}) takes the form, 
\begin{equation} \label{Vish5.9'}
    \left( \sum_{\xi \in \mathcal{I}}  \left( |\mathcal{F}_Lf(\xi)| \varphi(\xi)^{\frac{1}{b}-\frac{1}{p'}} \right)^b \|u_{\xi}\|_{L^\infty(\overline{M})}^{1-\frac{b}{p'}}  \|v_{\xi}\|_{L^\infty(\overline{M})}^{1- \frac{b}{p}}     \right)^{\frac{1}{b}} \lesssim_p M_\varphi^{\frac{1}{b}-\frac{1}{p'}} \|f\|_{L^p(\overline{M})},
\end{equation} provided that 
$$M_\varphi:= \sup_{t>0} t  \sum_{\overset{\xi \in \mathcal{I}}{t \leq \varphi(\xi) }}  \|u_\xi\|^2_{L^\infty(\overline{M})} <\infty   .$$
\item Assuming \begin{equation} 
    \sup_{\xi \in \mathcal{I}} \left( \frac{\|v_\xi\|_{L^\infty( \overline{M})}}{\|u_\xi\|_{L^\infty(\overline{M})}} \right)<\infty\quad \text{and} \quad \sup_{\xi \in \mathcal{I}} \left( \frac{\|u_\xi\|_{L^\infty( \overline{M})}}{\|v_\xi\|_{L^\infty(\overline{M})}} \right)<\infty,
\end{equation} in Theorem \ref{Th:LpLq-2},
for  $1<p \leq 2 \leq q <\infty,$  we prove that under the weak-$\ell^{b},$ condition with $\frac{1}{b}=\frac{1}{p}-\frac{1}{q},$ \begin{equation}\label{hmcineq}
\sup_{s>0, \,x\in \overline{M}} s \left( \sum_{\overset{\xi \in \mathcal{I}}{|\partial_{x}^\beta m(x,\xi)|>s}} \max \{\|u_{\xi}\|^2_{L^\infty(\overline{M})}, \|v_{\xi}\|^2_{L^\infty(\overline{M})} \}   \right)^{\frac{1}{b}}<\infty,
\end{equation} for $|\beta|\leq \rho,$ with $\rho$ large enough, the operator $A\equiv T_m:L^p(\overline{M})\rightarrow L^q(\overline{M}),$ extends to a bounded linear operator. 
\end{itemize}

Finally, we apply the above $L^{p}-L^{q}$ results to the non-linear partial differential equations (PDEs):
\begin{itemize}
\item Let us denote by $L^2(\M)$ the Hilbert space $L^2$ on $\M$. In the nonlinear stationary problem case, we consider the following equation in $L^2(\M)$
\begin{equation*}
Au=|Bu|^{p}+f,
\end{equation*}
where $A, B: L^2(\M) \to L^2(\M)$ and $1\leq p<\infty$.

\item  As an example of the application to the nonlinear heat equation, we study the Cauchy problem in the space $L^{\infty}(0, T; L^{2}(\M))$
\begin{equation*}
u_t(t) - |Bu(t)|^{p} = 0, u(0)=u_0,
\end{equation*}
where $B$ is a linear operator in $L^2(\M)$ and $1\leq p<\infty$.

\item In the non-linear wave equation case, we study the following initial value problem (IVP)
\begin{align*}
u_{tt}(t) - b(t)|Bu(t)|^{p} = 0,
\end{align*}
$$
u(0)=u_0, \,\,\, u_t(0)=u_1,
$$
where $b$ is a positive bounded function depending only on time, $B$ is a linear operator in $L^2(\M)$ and $1\leq p<\infty$.
\end{itemize}
In all of these cases, we establish well-posedness properties of the solutions in the space $L^{\infty}(0, T; L^{2}(\M))$. We also note that the operators $B$ in our examples have a nature of integro-differential operators.

\begin{remark}\label{rem}
Let us observe that for the $n$-torus, $\overline{M}=\mathbb{T}^n\equiv[0,1)^n,$ we have that $\partial{M}=\emptyset,$ and if we choose $L=\Delta_{\mathbb{T}^n}$ being the Laplacian on the torus, then $v_{\xi}=u_\xi=e_\xi,$ where $e_{\xi}(x):=e^{2\pi ix\cdot \xi},$  $x\in \mathbb{T}^n,$ $\xi\in \mathcal{I}=\mathbb{Z}^n.$ In this case, our main results recover the classical periodic Paley-Inequality, 
\begin{equation} 
    \left( \sum_{\xi \in \mathbb{Z}^n} |\widehat{f}(\xi)|^p  \varphi(\xi)^{2-p}   \right)^{\frac{1}{p}} \lesssim M_\varphi^{\frac{2-p}{p}} \|f\|_{L^p(\mathbb{T}^n)},
\end{equation}  and the periodic Hausdorff-Young-Paley inequality  
\begin{equation} \label{Vish5.9'''}
    \left( \sum_{\xi \in \mathbb{Z}^n}  \left( |\widehat{f}(\xi)| \varphi(\xi)^{\frac{1}{b}-\frac{1}{p'}} \right)^b     \right)^{\frac{1}{b}} \lesssim_p M_\varphi^{\frac{1}{b}-\frac{1}{p'}} \|f\|_{L^p(\mathbb{T}^n)}.
\end{equation} Observe that the condition \eqref{hmcineq}, takes the form
\begin{equation}
\sup_{x\in \mathbb{T}^n}  \sup_{s>0}s^b\#\{\xi\in \mathbb{Z}^n:m(x,\xi)\geq s\}^{\frac{1}{b}}:=  \sup_{s>0, \,x\in \mathbb{T}^n} s \left( \sum_{\overset{\xi \in \mathcal{I}}{|\partial_{x}^\beta m(x,\xi)|>s}}  \right)^{\frac{1}{b}}<\infty,
\end{equation} for $|\beta|\leq [n/p]+1,$ which implies that the periodic operator 
\begin{equation}\label{periodic}
    Af(x)=\sum_{\xi\in \mathbb{Z}^n}e^{i2\pi x\cdot \xi}m(x,\xi)(\mathcal{F}_{\Delta_{\mathbb{T}^n}}f)(\xi),\,\,\,f\in C^\infty(\mathbb{T}^n),
\end{equation}admits a bounded extension from $L^p(\mathbb{T}^n)$ into $L^q(\mathbb{T}^n),$ for  $1<p \leq 2 \leq q <\infty,$  and $\frac{1}{b}=\frac{1}{p}-\frac{1}{q}.$
\end{remark}
\begin{remark}
The H\"ormander condition for pseudo-multipliers, in particular, associated with the harmonic oscillator on $M=\mathbb{R}^n,$ has been studied in \cite{CR} and \cite{C19} and references therein. In this work we will generalise such analysis to the case of arbitrary smooth manifolds. 
\end{remark}
\begin{remark}
The periodic Paley-inequality, Hausdorff-Young-Paley inequality, and the $L^p(\mathbb{T}^n)$-$L^q(\mathbb{T}^n),$ estimate are known to be sharp. We refer the reader to Littlewood and Paley \cite{HP-1,HP}, and Zygmund \cite{Zygmund} for details. 
\end{remark}
\begin{remark}
The  classical periodic inequalities in Remark \ref{rem}, together with the the $L^p(\mathbb{T}^n)$-$L^q(\mathbb{T}^n)$ estimate above, were first extended to the case of compact homogeneous manifolds in the work of Akylzhanov, the third author and Nursultanov
\cite{ARN}, by taking   $\overline{M}=M=G/K,$ $\partial{M}=\emptyset,$ with $G$  being a compact Lie group and $K$  one of its closed subgroups. The Paley-inequality, the Hausdorff-Young-Paley inequality, and the $L^p(M)$-$L^q(M)$ estimates obtained in \cite{ARN}, used the notion of matrix-valued symbol and also a matrix-valued Fourier transform. If we consider the model operator $L$ being $L\equiv \mathcal{L}_{G/K},$ that is the lifting of the Laplacian $\mathcal{L}_{G}$ on $G,$ to $M,$ the inequalities obtained here are different of the obtained in \cite{ARN},  because we use scalar-valued symbols and a scalar-valued Fourier transform. However they are related in some sense. Recently, in \cite{CK20} the $L^p$-$L^q$ boundedness of spectral multipliers of the anharmonic oscillator has been investigated by Chatzakou and the second author. The anharmonic oscillator can be thought as a self-adjoint prototype for model operator $L$ when $M=\mathbb{R}^n.$  
\end{remark}
 \begin{remark}
The sharpness of the Paley-inequality on compact homogeneous manifolds was discussed in \cite[page 1529]{ARN}, and in particular in the case of $M=\textnormal{SU}(2),$ with the notion of {\em monotone matrices} (see Definition 1.8 of \cite{ARN}).
\end{remark}

\begin{remark}
Some results on $L^p$-Fourier multipliers in the spirit of the H\"ormander-Mihlin theorem, are also known on locally compact groups (see the paper of Akylzhanov and the third author \cite{AR}). The classical work of Coifman and Weiss \cite{CW71} includes the case of the group SU(2), the reference \cite{RW15}  for  general  compact  Lie  groups,  and  \cite{FR14}  for  graded  Lie  groups.   The case of pseudo-differential operators on compact Lie groups (and also in graded groups) can be found in  \cite{CRD19}  and \cite{DR19}.
\end{remark}
\begin{remark}
If $L$ admits a self-adjoint extension $L^*$ on $L^2(M),$ then we have that $u_\xi=v_\xi$ for every $\xi\in \mathcal{I},$ and the condition \eqref{maxcondition} holds true. In this case, $L\subset L^{**},$ which means that $\textnormal{Dom}(L)\subset\textnormal{Dom}(L^*),$ and for every $f\in \textnormal{Dom}(L), $ $Lf=L^*f.$ In this privileged situation we have,
\begin{equation}\label{maxcondition2}\sup_{\xi \in \mathcal{I}} \left( \frac{\|v_\xi\|_{L^\infty( \overline{M})}}{\|u_\xi\|_{L^\infty(\overline{M})}} \right)=\sup_{\xi \in \mathcal{I}} \left( \frac{\|u_\xi\|_{L^\infty( \overline{M})}}{\|v_\xi\|_{L^\infty(\overline{M})}} \right)=1.\end{equation}  
\end{remark}

\begin{remark}
If $\overline{M}$ is a geodesically complete Riemannian manifold, the $L^\infty$-$BMO$ boundedness of pseudo-differential operators will be considered in Theorem \ref{bmo}.
\end{remark}
This work is organised as follows. In Section \ref{PRE} we present some basics about the non-harmonic analysis developed in \cite{Ruz-Tok,ProfRuzM:TokN:20017}.  In Section \ref{HMS}, we prove our H\"ormander-Mihlin condition and also our Marcinkiewicz type condition. The  Paley-intequality, Hausdorff-Young-Paley inequality, and the $L^p$-$L^q$ boundedness of pseudo-differential operators will be investigated in Section \ref{LpLq}. Finally, in Section \ref{Applications}, we obtain some applications of our main results. Indeed,  we obtain some applications to non-linear PDEs.

Throughout the paper, we shall use the notation $A \lesssim B$ to indicate $A\leq cB $ for a suitable constant $c >0$, where as $A \asymp B$ if $A\leq cB$ and $B\leq d A$, for suitable $c, d >0.$

	\section{Preliminaries}\label{PRE}
	Let  $\overline{M}$ be a manifold with boundary. This means that  the interior of $\overline{M}$, denoted by $M$, is the set of points in $\overline{M}$ which have neighbourhoods homeomorphic to an open subset of $\mathbb{R}^n$. The boundary of $\overline{M}$, denoted $\partial M$, is the complement of  $M$ in $\overline{M}$. The boundary points can be characterised as those points which are mapped  on the boundary hyperplane of $\{x=(x_1,\cdots,x_n)\in \mathbb{R}^n:x_{n}\geq 0 \}$  under some coordinate chart. If $M$ is a manifold with boundary of dimension $n$ then  $\partial M\neq \emptyset$  is a manifold (without boundary) of dimension $n-1$. We will assume that $M$ is orientable. This implies the orientability of $\partial M.$ So, we assume that $\overline{M}$ is endowed with a density $dx.$ In practice, we can assume that $dx$ is defined by a non-trivial volume form $dx=\omega dx_1\wedge \cdots \wedge dx_n$ on $\overline{M}.$ A function $f:\overline{M}\rightarrow \mathbb{C}$ is smooth at $x\in M,$ if there exists a chart $(\phi, V)$ on $M,$ where $V$ is a neighbourhood of $x,$ $V\subset M,$ and $\phi: V\rightarrow W=\phi(V)\subset \mathbb{R}^n$ is a coordinate path, such that the mapping $f\circ \phi^{-1}:\phi(V)\rightarrow \mathbb{C}$ is smooth. If $x\in \overline{M}\setminus M=\partial{M},$ we say that $f:\overline{M}\rightarrow \mathbb{C}$ is smooth at $x,$ if there exists a chart $(\phi, V)$ on $M,$ where $V$ is neighbourhood of $x\in \partial{M},$ and $\phi:V\rightarrow\phi(V)=W\cap (\mathbb{R}^{n-1}\times[0,\infty)), $ with $W$ being an open subset of $ \mathbb{R}^n,$ such that the mapping $f\circ\phi^{-1}: W\cap (\mathbb{R}^{n-1}\times[0,\infty))\rightarrow \mathbb{C}, $ is the restriction to $W\cap (\mathbb{R}^{n-1}\times[0,\infty))$ of a smooth map $g:W\rightarrow \mathbb{C},$ i.e. $g|_{W\cap (\mathbb{R}^{n-1}\times[0,\infty))}=f.$    We will denote by $C^\infty(\overline{M})$  the set of smooth functions $f$ over $\overline{M}.$       We will denote by $\partial_{x}^\beta f:=\partial_{x}^\beta g|_{W}\circ \phi,$  the  partial derivatives of $f,$ defined in local coordinates on $\overline{M}$.  We will denote by $L^p(\overline{M}),$ $1\leq p<\infty,$ the Lebesgue spaces associated to $dx.$ For $p=\infty,$ $L^\infty(\overline{M})$ denotes the set of essentially $dx$-bounded functions.
	
We will  describe some elements involved in the  quantization of  pseudo-differential operators on manifolds as developed by the third and last author in \cite{Ruz-Tok} and \cite{ProfRuzM:TokN:20017}.
The space 
\begin{equation}\label{Eq:R-T-dom}
C^\infty_L(\overline{M}):=\cap_{k=1}^\infty \textnormal{Dom}(L^k)
\end{equation}
where $\textnormal{Dom}(L^k):=\{ f\in L^2(\overline{M})\,|\, L^j f\in \textnormal{Dom}(L),\, j=0,1,\cdots ,k-1\},$
so that the boundary condition (BC) are satisfied by  the operators $L^j$. The Fr\'echet topology of $C^\infty_L(\overline{M})$ is given by the family of norms
$$\|f\|_{C^k_L}:=\max_{j\leq k} \| L^j f\|_{L^2(\overline{M})},\,\, k\in\mathbb{N}_0,\,\,f\in  C^\infty_L(\overline{M}).$$
Similarly, we define $C^\infty_{L^\ast}(\overline{M})$ corresponding to the adjoint $L^\ast$ by 
$$C^\infty_{L^\ast}(\overline{M}):=\cap_{k=1}^\infty \textnormal{Dom}((L^*)^k)$$ 
where $\textnormal{Dom}((L^*)^k):=\{ f\in L^2(\overline{M})\,|\, (L^*)^j f\in \textnormal{Dom}(L),\, j=0,1,\cdots ,k-1\},$
which satisfy the adjoint boundary conditions corresponding to the operator $L^\ast$. The Fr\'echet topology of $C^\infty_{L^\ast}(\overline{M})$ is given by the family of norms
$$\|f\|_{C^k_{L^\ast}}:=\max_{j\leq k} \| (L^\ast)^j f\|_{L^2(\overline{M})},\,\, k\in\mathbb{N}_0,\,\,f\in  C^\infty_L(\overline{M}).$$
\\
Since $\{ u_\xi\}$ and $\{ v_\xi\}$ are dense in $L^2(\overline{M})$ we have  that $C^\infty_{L}(\overline{M})$ and $C^\infty_{L^\ast}(\overline{M})$ are dense in $L^2(\overline{M})$.

In order to introduce a global definition of the Fourier transform let us introduce the space
 $\mathcal{S}(\mathcal{I}),$ which consists of all rapidly decreasing functions $\phi:\mathcal{I}\to \mathbb{C}$. This means that for any $N\in \mathbb{N},$ there exists a constant $C_{\phi,N}$ such that $|\phi(\xi)|\leq C_{\phi,N}\langle \xi\rangle^{-N}\,\,\text{for all } \xi\in\mathcal{I}.$ The space  $\mathcal{S}(\mathcal{I})$ forms a Fr\'echet space with the family of semi-norms $p_k(\phi):=\sup_{\xi\in\mathcal{I}} \langle\xi\rangle^k |\phi(\xi)|.$
The $L$-Fourier transform is a bijective homeomorphism $\mathcal{F}_L:C^\infty_L(\overline{M})\to  \mathcal{S}(\mathcal{I})$ defined by 
\begin{equation}\label{Lfourier}
(\mathcal{F}_L f)(\xi):=\widehat{f}(\xi):=\int\limits_{\overline{M}}  f(x)\overline{v_\xi(x)}\, dx.
\end{equation}
The  inverse operator $\mathcal{F}^{-1}_L: \mathcal{S}(\mathcal{I})\to  C^\infty_L( \overline{M})$ is given by  $$(\mathcal{F}^{-1}_L h)(x):=\sum_{\xi\in \mathcal{I}} h(\xi) u_\xi(x)$$ so that the Fourier inversion formula is given by
\begin{equation}
f(x)=\sum_{\xi\in \mathcal{I}} \widehat{f}(\xi) u_\xi(x),\,\, f\in C^\infty_L(\overline{M}).
\end{equation} 
Similarly, the $L^\ast$-Fourier transform is a bijective homeomorphism $\mathcal{F}_L:C^\infty_{L\ast}(\overline{M})\to  \mathcal{S}(\mathcal{I})$ defined by 
\begin{equation*}
(\mathcal{F}_{L^\ast} f)(\xi):=\widehat{f}_\ast (\xi):=\int\limits_{\overline{M}}  f(x)\overline{u_\xi(x)}\, dx.
\end{equation*}
Its inverse $\mathcal{F}^{-1}_{L^\ast}: \mathcal{S}(\mathcal{I})\to  C^\infty_{L^\ast}(\overline{M})$ is given by $(\mathcal{F}^{-1}_{L^\ast}h)(x):=\sum_{\xi\in \mathcal{I}} h(\xi) v_\xi(x)$
so that the conjugate Fourier inversion formula is given by
\begin{equation}
f(x):=\sum_{\xi\in \mathcal{I}} \widehat{f}_{\ast}(\xi) v_\xi(x),\,\, f\in C^\infty_{L^\ast}(M).
\end{equation}
The space $\mathcal{D}'_L(\overline{M}):=\mathcal{L}(C^\infty_{L^*}(\overline{M},\mathbb{C}))$ of linear continuous functionals on $C^\infty_{L^*}(\overline{M})$ is called the space of {\em $L$-distributions}. By dualising the inverse $L$-Fourier transform $\mathcal{F}^{-1}_L:\mathcal{S}(\mathcal{I})\to C^\infty_L(\overline{M}),$ the $L$-Fourier transform extends uniquely to the mapping $$\mathcal{F}_L:\mathcal{D}'_L(\overline{M})\to\mathcal{S}'(\mathcal{I})$$
by the formula $\langle \mathcal{F}_L w,\phi\rangle:=\langle w, \overline{\mathcal{F}^{-1}_{L^*}\overline{\phi}}\rangle$ with $w\in \mathcal{D}'_L(\overline{M}),$ $\phi\in\mathcal{S}(\mathcal{I})$.  The space $l^2_L:=\mathcal{F}_L\big( L^2(\overline{M})  \big)$ is defined as the image of $L^2(\overline{M})$ under the $L$-Fourier transform. Then the space of $l^2_L$ is a Hilbert space with the linear product
\begin{equation} \label{Hilb}
    (a,b)_{l^2_L}:=\sum_{\xi\in\mathcal{I}} a(\xi)\overline{(\mathcal{F}_{L^\ast}\circ \mathcal{F}^{-1}_Lb(\xi))}.
\end{equation}
Then the space $l^2_L$ consists of the sequences of the Fourier coefficients of function in $L^2(\overline{M})$, in which Plancherel identity holds, for $a,b\in l^2_L$,
$$(a,b)_{l^2_L}=(\mathcal{F}^{-1}_L a,\mathcal{F}^{-1}_L b )_{L^2}.$$
For $f\in \mathcal{D}'_L(\overline{M})\cap \mathcal{D}'_{L^*}(M)$ and $s\in\mathbb{R}$, we say that 
$$f\in \mathcal{H}^s_L(\overline{M})\,\,\text{ if and only if }\,\, \langle \xi \rangle^s \widehat{f}(\xi)\in l^2_L,$$
provided with the norm $$\|f \|_{\mathcal{H}^s_L}:=\big( \sum_{\xi\in\mathcal{I}}\langle \xi \rangle^{2s} \widehat{f}(\xi)\overline{\widehat{f}_*(\xi)} \big)^{1/2}.$$

Now, we will present the definition of global pseudo-differential operator as developed in \cite{Ruz-Tok}. If $m: \overline{M}\times\mathcal{I}\rightarrow \mathbb{C}$ is a smooth function, which means that $m(\cdot,\xi )\in C^{\infty}_L(\overline{M}),$ for every $\xi\in\mathcal{I},$ the pseudo-differential operator associated to $m,$ is defined by 
\begin{equation}\label{defipseudo}
   Af(x)=\sum_{\xi\in\mathcal{I}} u_\xi(x)m(x,\xi) \widehat{f}(\xi),\,\,f\in \textnormal{Dom}(A). 
\end{equation} 
In those cases where $A:C^\infty_{L}(\overline{M})\to C^\infty_{L}(\overline{M})$ is a continuous linear operator with symbol $\sigma:\mathcal{I}\to \mathbb{C}$, that does not depends on $x\in \overline{M},$ we say that $A$ is  a {\em $L$-Fourier multiplier}. Indeed, such operators satisfy the identity
 $$\mathcal{F}_{L}(Af)(\xi)=\sigma(\xi)\mathcal{F}_{L}(f)(\xi)$$for every $  f\in C^\infty_{L}(\overline{M})$ and  for every $\xi\in\mathcal{I}.$

\section{$L^p$-$L^p$ boundedness of pseudo-differential operators }\label{HMS}
\subsection{H\"ormander-Mihlin condition for pseudo-differential operators }
In this section we investigate the $L^p$-boundedness of global pseudo-differential operators on a manifold $\overline{M}=M\cup \partial M,$ where $M$ is the interior of $\overline{M}$ and $\partial M$ is its boundary. We will denote by $L^{\circ}$ the densely defined  operator given by
\begin{equation*}
    L^{\circ}u_\xi=\overline{\lambda_\xi}u_\xi,\quad \xi \in \mathcal{I}.
\end{equation*}
The results presented here also allow the case $\partial M=\emptyset$.  We will assume the following facts,
\begin{itemize}
    \item[HMI:] there exist $-\infty<\gamma_p^{(1)},\gamma_p^{(2)}<\infty,$  satisfying
\begin{equation}\label{gammap}
\Vert u_\xi\Vert_{L^{p}(\overline{M})} \lesssim  |\lambda_\xi|^{\gamma_p^{(1)}},\,\,\, \Vert v_\xi\Vert_{L^{p'}(\overline{M})}\lesssim |\lambda_\xi|^{\gamma_p^{(2)}},\,\,\,1\leq p\leq \infty.
\end{equation} 
\item[HMII:] The operator $\sqrt{L^{\circ } L}$  satisfies the Weyl-eigenvalue counting formula
\begin{equation}\label{weylformula}
    N(\lambda):=\sum_{\xi\in \mathcal{I}:|\lambda_\xi|\leq \lambda}=O(\lambda^{Q}),\,\lambda\rightarrow\infty,
\end{equation}where  $Q>0.$ If $Q'>Q,$ then $N(\lambda)=O(\lambda^{Q'}),$ $\lambda\rightarrow\infty,$ so that we assume that $Q$  is the smallest real number satisfying \eqref{weylformula}. 

\end{itemize}
\begin{remark}
The first  assumption (HMI) means that the $L^p$-norms of the biorthonomal system $\{u_{\xi}\}_{\xi\in\mathcal{I}}$ and $\{v_\xi\}_{\xi\in \mathcal{I}}$ growth polynomially, while (HMII) assures that we have a suitable control on the spectrum of $L.$ If $M$ is a closed manifold and $L$ is an elliptic self-adjoint and positive pseudo-differential operator, is known that in \eqref{weylformula},  $Q=\dim (M).$ Other kind of operators appear for example when $L$ is the positive sub-Laplacian on a closed manifold $M,$ in this case  \eqref{weylformula} holds with $Q$ being the Hausdorff dimension associated to the Carnot-Carath\'eodory distance associated with
$L.$
\end{remark}
We observe that $\gamma_{2}^{(1)}=\gamma_{2}^{(2)}=0$ in view that the functions $u_\xi$ are considered with $L^2(\overline{M})$-norm normalised. We will denote
\begin{equation}
    \gamma_{p}:=\gamma_{p}^{(1)}+\gamma_{p}^{(2)}.
\end{equation} Now, we will precise the kind of pseudo-differential that we will analyse in this section. We will refer to them as pseudo-multipliers. We will define it as follows.
\begin{definition}\label{pseudomultiplier}
Let $A:C^\infty_L(\overline{M})\rightarrow C^\infty_L(\overline{M})$ be a continuous linear operator defined as in \eqref{defipseudo}. We say that the pseudo-differential operator $A$ is a pseudo-multiplier associated with $L$ (pseudo-multiplier for short), if there exists a continuous function $\tau_m:\overline{M}\times \mathbb{R}\rightarrow \mathbb{C},$ such that for every $\xi\in \mathcal{I},$ and $x\in \overline{M},$ we have $m(x,\xi)=\tau_m(x,|\lambda_\xi|).$ In this case, we say that $A$ is the pseudo-multiplier associated with $\tau_m.$ Clearly, 
\begin{equation}\label{defipseudom}
   Af(x)\equiv \tau_{m}(x,\sqrt{L^{\circ } L})f(x):=\sum_{\xi\in\mathcal{I}} u_\xi(x)\tau_m(x,|\lambda_\xi|) \widehat{f}(\xi), 
\end{equation} for all $f\in C^\infty_{L}(\overline{M}).$
\end{definition}
\begin{remark}
There is a one to one correspondence between pseudo-differential operators mapping $C^\infty_L(\overline{M})$ into itself and pseudo-multipliers. Indeed, starting with a pseudo-multiplier defined by \eqref{defipseudom}, we can associate to it a symbol via $m(x,\xi):=\tau_{m}(x,|\lambda_\xi|),$ and viceversa, starting with a pseudo-differential operator defined by \eqref{defipseudo}, we can define for every $\lambda_\xi,$ $\tau'_{m}(x,|\lambda_\xi|):=m(x,\xi),$ and after that we can interpolate  $\{\tau_{m}'(x,|\lambda_\xi|)\}_{x\in \overline{M},\,\xi\in \mathcal{I}},$ with a continuous function $\tau_m:\overline{M}\times \mathbb{R}\rightarrow \mathbb{C},$ in such a way that $$\tau_m|_{\overline{M}\times  \{ \lambda_
\xi \}_{\xi\in \mathcal{I}} }=\{\tau_{m}'(x,|\lambda_{\xi}|)\}_{x\in \overline{M},\,\xi\in \mathcal{I}}=\{m(x,\xi)\}_{x\in \overline{M},\,\xi\in \mathcal{I}}.$$
\end{remark}
\begin{remark}
The approach in proving the $L^p$-estimates for this section comes from starting with a function $\tau_m:\overline{M}\times \mathbb{R}\rightarrow\mathbb{C},$  satisfying the H\"ormander condition 
\begin{equation}\label{HMCD}
    \Vert \tau_m \Vert_{l. u.  \mathcal{H}^s }=\sup_{r>0,x\in \overline{M}}r^{(s-\frac{Q_m}{2})}\Vert \langle \,\cdot\,\rangle^{s}\mathscr{F}[\tau_m(x,\cdot)\psi({r^{-1}\cdot})]\Vert_{L^2(\mathbb{R})}<\infty,
\end{equation} where $Q_m\in \mathbb{R},$ and later we consider for such a function $\tau_m,$ the pseudo-differential operator $T_m,$ with symbol $\tau_m|_{{\overline{M}\times \{|\lambda_\xi|\}_{\xi\in \mathcal{I}}}}=\{m(x,\xi)\}_{(x,\xi)\in  \overline{M}\times \mathcal{I}  }$ obtained from the restriction of $\tau_m:\overline{M}\times \mathbb{R}\rightarrow\mathbb{C},$ to the set  $\overline{M}\times \{|\lambda_\xi|\}_{\xi\in \mathcal{I}} $. Because there are infinite continuous  extensions $\tau_m$ for $m,$ the H\"ormander Mihlin condition  depends on the extension $\tau_m$ under consideration. In practice, however, we can start with a function $\tau:\overline{M}\times \mathbb{R}\rightarrow \mathbb{C}$ satisfying \eqref{HMCD} (with $\tau$ instead of $\tau_m$) and we can consider the pseudo-multiplier associated to $\tau$ which defines a pseudo-differential operator bounded on $L^p(\overline{M}),$ (for $s$ large enough). Important examples of pseudo-multipliers, are the spectral multipliers of $\sqrt{L^\circ L}$ which are defined by 
\begin{equation}\label{defipseudom2}
  \tau(\sqrt{L^{\circ } L})f(x):=\sum_{\xi\in\mathcal{I}} u_\xi(x)\tau(|\lambda_\xi|) \widehat{f}(\xi), 
\end{equation} for all $f\in C^\infty_{L}(\overline{M}).$ Of particular interest are the functions of positive elliptic operators $E,$ $\tau(E)$ on a closed manifold, satisfying estimates of the type $|\partial_t^\alpha\tau(t)|\lesssim (1+ t)^{-\rho|\alpha|},$ $\rho>0,$ (see e.g. \cite{Sikora} and references therein). The prototype in this situation is the  positive Laplacian $L=\Delta_{(M,g)}$ on a closed Riemannian manifold $(M,g).$
\end{remark}
\begin{remark} We summarise the assumptions of this section keeping in mind that if we know how the spectrum of $\sqrt{L^{\circ } L}$ behaves (in the form of (HMII)), if we can estimate polynomially the $L^p$-norms of the eigenfunctions, and we encode the symbol of a pseudo-differential operator $A,$ $m$ in terms of the function $\tau_m,$ we expect to provide information on the boundedness of $A,$ on $L^p(\overline{M}),$ (or from $L^\infty(\overline{M})$ to $BMO(\overline{M})$), by using conditions of H\"ormander Mihlin type on $\tau_m$. One reason for this is that $\overline{M}\times\textnormal{Spectrum}(\sqrt{L^{\circ } L})$ is contained in the domain of $\tau_m.$ 
\end{remark}

\subsection{$L^p$-boundedness of pseudo-multipliers of $L$}
In this section we prove the H\"ormander-Mihlin theorem for operators on a manifold $\overline{M},$ possibly with $\partial M\neq \emptyset,$ allowing also the case $\partial M= \emptyset.$

\begin{theorem}\label{Hormandercondition}
Let $\overline{M}$ be a smooth manifold with boundary  and let  $A:C^\infty_L(\overline{M})\rightarrow C^\infty_L(\overline{M})$ be the pseudo-multiplier defined in \eqref{defipseudom}.  Let us assume that $\tau_m$  satisfies the following H\"ormander condition,
\begin{eqnarray}\label{Fcondition}
\Vert \tau_m \Vert_{l. u.  \mathcal{H}^s }=\sup_{r>0,x\in \overline{M}}r^{(s-\frac{Q_m}{2})}\Vert \langle \,\cdot\,\rangle^{s}\mathscr{F}[\tau_m(x,\cdot)\psi({r^{-1}\cdot})]\Vert_{L^2(\mathbb{R})}<\infty,
\end{eqnarray}  for $s>\max\{1/2, \gamma_p+ Q+(Q_m/2)\}.$ Then $A\equiv T_m:L^p(\overline{M})\rightarrow L^p(\overline{M})$ extends to a  bounded linear operator for all $1<p<\infty.$
\end{theorem}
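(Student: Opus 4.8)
The plan is to exploit a Littlewood--Paley (dyadic) decomposition in the spectral variable of $\sqrt{L^{\circ}L}$ and to reduce the theorem to a single \emph{band} estimate carrying geometric decay. Fix $\psi_0\in C^\infty_0(\mathbb{R})$ equal to $1$ near the origin and, for $\ell\geq 1$, set $\psi_\ell(t)=\psi(2^{-\ell}t)$ with $\psi$ supported in an annulus $\{|t|\sim 1\}$, so that $\sum_{\ell\geq 0}\psi_\ell\equiv 1$. Writing $\tau_\ell(x,t):=\tau_m(x,t)\psi_\ell(t)$ and letting $T_\ell$ be the pseudo-multiplier attached to $\tau_\ell$ via \eqref{defipseudom}, we have $T_m=\sum_{\ell\geq0}T_\ell$ (the $\ell=0$ term being a bounded finite-band operator), and it suffices to prove $\|T_\ell\|_{L^p\to L^p}\lesssim 2^{-\ell\varepsilon}$ for some $\varepsilon>0$ and then sum the geometric series. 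The essential difficulty, which forbids a direct Plancherel argument, is that $\tau_m(x,|\lambda_\xi|)$ entangles the space variable $x$ with the index $\xi$, so $T_\ell$ is a genuine operator rather than a spectral multiplier and no orthogonality is at hand.

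To decouple the two variables I would pass to the Fourier side in the spectral variable. By Fourier inversion one writes, for each fixed $x$, $\tau_\ell(x,|\lambda_\xi|)=\int_{\mathbb{R}}\widehat{\tau_\ell}(x,s)\,e^{2\pi i s|\lambda_\xi|}\,ds$ with $\widehat{\tau_\ell}(x,\cdot)=\mathscr{F}[\tau_\ell(x,\cdot)]$. Inserting this into \eqref{defipseudom} and interchanging the (finite, by discreteness of the spectrum) band sum with the integral yields
\begin{equation*}
T_\ell f(x)=\int_{\mathbb{R}}\widehat{\tau_\ell}(x,s)\,\big(e^{2\pi is\sqrt{L^{\circ}L}}P_\ell f\big)(x)\,ds,
\end{equation*}
where $P_\ell$ projects onto the band $\{\xi:|\lambda_\xi|\sim 2^\ell\}$ and $e^{2\pi is\sqrt{L^{\circ}L}}g=\sum_\xi e^{2\pi is|\lambda_\xi|}\widehat{g}(\xi)u_\xi$. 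The point of this manoeuvre is that the group $e^{2\pi is\sqrt{L^{\circ}L}}$ carries the oscillation in a factor of modulus one, confining the $s$-dependence to $\widehat{\tau_\ell}(x,s)$, on which the H\"ormander hypothesis \eqref{Fcondition} acts.

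The band norm then splits into two independent bounds. Using $\|hg\|_{L^p}\leq\|h\|_{L^\infty}\|g\|_{L^p}$ and Minkowski's integral inequality, $\|T_\ell f\|_{L^p}\leq\int_{\mathbb{R}}\|\widehat{\tau_\ell}(\cdot,s)\|_{L^\infty(\overline{M})}\,\|e^{2\pi is\sqrt{L^{\circ}L}}P_\ell f\|_{L^p}\,ds$. First, by the triangle inequality and the H\"older bound $|\widehat{f}(\xi)|\leq\|f\|_{L^p}\|v_\xi\|_{L^{p'}}$, one gets uniformly in $s$,
\begin{equation*}
\big\|e^{2\pi is\sqrt{L^{\circ}L}}P_\ell f\big\|_{L^p}\leq\|f\|_{L^p}\sum_{|\lambda_\xi|\sim 2^\ell}\|v_\xi\|_{L^{p'}}\|u_\xi\|_{L^p}\lesssim 2^{\ell(\gamma_p+Q)}\|f\|_{L^p},
\end{equation*}
where the loss $2^{\ell\gamma_p}$ comes from the growth assumption (HMI) and the factor $2^{\ell Q}$, bounding the number of eigenvalues in the band, from the Weyl law (HMII). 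Second, Cauchy--Schwarz in $s$ against the weight $\langle s\rangle^{-s_0}$ (square-integrable for $s_0>1/2$) converts Sobolev regularity into decay: $\int_{\mathbb{R}}\|\widehat{\tau_\ell}(\cdot,s)\|_{L^\infty(\overline{M})}\,ds\lesssim\sup_x\|\tau_\ell(x,\cdot)\|_{\mathcal{H}^{s_0}}$, while the rescaling built into \eqref{Fcondition} with $r=2^\ell$ gives $\sup_x\|\tau_\ell(x,\cdot)\|_{\mathcal{H}^{s}}\lesssim 2^{-\ell(s-Q_m/2)}\|\tau_m\|_{l.u.\,\mathcal{H}^s}$. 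Combining the two (with $1/2<s_0\leq s$) yields $\|T_\ell f\|_{L^p}\lesssim 2^{-\ell(s-\gamma_p-Q-Q_m/2)}\|\tau_m\|_{l.u.\,\mathcal{H}^s}\|f\|_{L^p}$, summable exactly when $s>\gamma_p+Q+Q_m/2$, the auxiliary requirement $s>1/2$ being precisely what legitimises the Cauchy--Schwarz step. Since this is already an $L^p\to L^p$ bound for every $1<p<\infty$, with $p$ entering only through $\gamma_p$, neither interpolation nor duality is required.

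The step I expect to be the main obstacle is this second bound: turning the one-dimensional spectral regularity of \eqref{Fcondition} into decay in $\ell$ with the correct exponent, where the interplay of $\gamma_p$, $Q$ and $Q_m$ must conspire to beat the polynomial growth and thereby forces the stated threshold on $s$. A subsidiary nuisance is that the interchange of $\sup_{x\in\overline{M}}$ with the $L^2$-integral in $s$ appearing in $\int_{\mathbb{R}}\|\widehat{\tau_\ell}(\cdot,s)\|_{L^\infty(\overline{M})}\,ds$ is not an identity and must be absorbed into the uniform constant $\|\tau_m\|_{l.u.\,\mathcal{H}^s}$; one must also justify the convergence of $\sum_\ell T_\ell$ on the dense subspace $C^\infty_L(\overline{M})$ before passing to the $L^p(\overline{M})$ closure.
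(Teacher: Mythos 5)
Your proposal is correct and follows essentially the same route as the paper's proof: the same dyadic decomposition $\sum_\ell \psi_\ell\equiv 1$ in the spectral variable, the same Euclidean Fourier inversion writing $\tau_\ell(x,|\lambda_\xi|)=\int_{\mathbb{R}}\widehat{\tau_\ell}(x,z)e^{2\pi i z|\lambda_\xi|}\,dz$, the same crude band estimate $\sum_{|\lambda_\xi|\sim 2^\ell}\|u_\xi\|_{L^p}\|v_\xi\|_{L^{p'}}\lesssim 2^{\ell(\gamma_p+Q)}$ from (HMI) and the Weyl law (HMII), and the same Cauchy--Schwarz step against $\langle z\rangle^{-s}$ (which is what forces $s>1/2$) combined with the scaling $r=2^\ell$ in \eqref{Fcondition} to obtain the summable decay $2^{-\ell(s-\gamma_p-Q-\frac{Q_m}{2})}$. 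The only cosmetic difference is that the paper organises the band bound through the duality pairing $(T_{m_k}f,g)_{L^2(\overline{M})}$ with $\Vert g\Vert_{L^{p'}}=1$, whereas you phrase it through Minkowski's integral inequality and the modulus-one factors $e^{2\pi i z|\lambda_\xi|}$ acting on the band projection; the resulting estimates are identical.
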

\begin{proof}
We choose a function $\psi_0\in C^{\infty}_{0}(\mathbb{R}),$  $\psi_0(\lambda)=1,$  if $|\lambda|\leq 1,$ and $\psi(\lambda)=0,$ for $|\lambda|\geq 2.$ For every $j\geq 1,$ let us define $\psi_{j}(\lambda)=\psi_{0}(2^{-j}\lambda)-\psi_{0}(2^{-j+1}\lambda).$ Then we have
\begin{eqnarray}\label{deco1}
\sum_{l\in\mathbb{N}_{0}}\psi_{l}(\lambda)=1,\,\,\, \text{for every}\,\,\, \lambda>0.
\end{eqnarray}
Let us consider $f\in C^\infty_{0}(\overline{M}).$
We will decompose the function $m$ as
 \begin{equation}
  \tau_m(x,|\lambda_\xi|)=\tau_m(x,|\lambda_\xi|)(\psi_0(|\lambda_\xi|)+\psi_1(|\lambda_\xi|))+\sum_{k=2}^{\infty}  m_k(x,\xi),
 \end{equation} where
 $$  m_k(x,\xi):= \tau_m(x,|\lambda_\xi|)\cdot \psi_{k}(|\lambda_\xi|).  $$
Let us define  the sequence of  pseudo-differential operators   $T_{m_j},\,\,j\in \mathbb{N},$  associated to every symbol $m_j,$  for $j\geq 2,$ and by $T_{0} $ the operator with symbol   $$\sigma\equiv \tau_m(x,|\lambda_\xi|)(\psi_{0}(|\lambda_\xi|)+\psi_{1}(|\lambda_\xi|)).$$ Then we want to show that the operator series
\begin{equation}
T_0+S_m,\,\,S_m:=\sum_{k} T_{m_k},
\end{equation}
satisfies,
\begin{equation}
\Vert T_m \Vert_{\mathscr{B}(L^p(\overline{M}))}  \leq \Vert T_0 \Vert_{\mathscr{B}(L^p(\overline{M}))} +\sum_k  \Vert T_{m_k} \Vert_{\mathscr{B}(L^p(\overline{M}))} ,
\end{equation}where the series in the right hand side converges. So, we want to estimate every norm $\Vert T_{m_j} \Vert_{\mathscr{B}(L^p(\overline{M}))}.$
For this, we will use the fact that for $f\in C^\infty_{0}(\overline{M}),$
\begin{equation}
\Vert T_{m_j}f \Vert_{L^p(\overline{M})}=\sup\{ |( T_{m_j} f,g)_{L^2(\overline{M})}|\, :\, \Vert g\Vert_{L^{p'}(\overline{M})}=1 \}.
\end{equation}
In fact, for $f$ and $g$ as above we have
\begin{align*}
( T_{m_k} f,\overline{g})_{L^2(\overline{M})} &=\int\limits_{\overline{M}}T_{m_k}f(x)g(x)dx\\
&=\int\limits_{\overline{M}}\sum_{2^k\leq|\lambda_\xi|<2^{k+1}}m(x,\xi)\widehat{f}(\xi)u_\xi(x)g(x)dx\\
&=\int\limits_{\overline{M}}\int\limits_{\overline{M}}\sum_{2^k\leq|\lambda_\xi|<2^{k+1}}m(x,\xi)f(y)u_\xi(x)\overline{v}_\xi(y)g(x)dydx.
\end{align*}
Now, in order use that $\tau_m$ satisfies the H\"ormander condition, we will use the Euclidean Fourier transform. Indeed, 
for every $x\in\overline{M}$ let us denote the inverse Euclidean Fourier transform of the function $$\tau_m(x,\cdot)\psi(2^{-k}\cdot):\omega\mapsto \tau_m(x,\omega)\psi(2^{-k}\omega),$$ by $\mathscr{F}^{-1}[\tau_m(x,\cdot)\psi(2^{-k}\cdot)].$ So, for every $\xi\in \mathcal{I},$ $\omega=|\lambda_\xi|\in \mathbb{R},$ and we have
\begin{align*}
&m_k(x,\xi):=\tau_m(x,|\lambda_\xi|)\psi(2^{-k}|\lambda_\xi|)\\
&=\mathscr{F}^{-1}(\mathscr{F}[\tau_m(x,\cdot)\psi(2^{-k}\cdot)])(\xi)=\int\limits_{\mathbb{R}} \mathscr{F}[\tau_m(x,\cdot)\psi(2^{-k}\cdot)](z)e^{2\pi i |\lambda_\xi|\cdot z}dz.
\end{align*} Consequently,
\begin{align*}
&|( T_{m_k} f,\overline{g})_{L^2(\overline{M})}|\\
&\leq \sum_{2^k\leq|\lambda_\xi|<2^{k+1}}\sup_{x\in\overline{M}}\int\limits_{\mathbb{R}}|\mathscr{F}[\tau_m(x,\cdot)\psi(2^{-k}\cdot)](z)|dz \\
&\hspace{6cm}\times \Vert f\Vert_{L^p}\Vert g\Vert_{L^{p'}}\Vert u_\xi\Vert_{L^{p}}\Vert \Vert  v_\xi\Vert_{L^{p'}}\\
&\lesssim \sum_{2^k\leq|\lambda_\xi|<2^{k+1}}\sup_{x\in\overline{M}}\int\limits_{\mathbb{R}} |\mathscr{F}[\tau_m(x,\cdot)\psi(2^{-k}\cdot)](z)|dz\\
&\hspace{6cm}\times\Vert f\Vert_{L^p}\Vert g\Vert_{L^{p'}}|\lambda_\xi|^{\gamma_p}.
\end{align*} So, we can estimate the operator norm of $T_{m_k}$ by
\begin{align*}
&\Vert T_{m_k} \Vert_{\mathscr{B}(L^p)}\\
&\lesssim \sum_{2^k\leq|\lambda_\xi|<2^{k+1}}\sup_{x\in\overline{M}}\int\limits_{\mathbb{R} } |\mathscr{F}[\tau_m(x,\cdot)\psi(2^{-k}\cdot)](z)|dz|\lambda_\xi|^{\gamma_p}\\
&\lesssim \sum_{2^k\leq|\lambda_\xi|<2^{k+1}}\sup_{x\in\overline{M}}\left(\int\limits_{\mathbb{R} } \langle z\rangle^{2s}|\mathscr{F}[\tau_m(x,\cdot)\psi(2^{-k}\cdot)](z)|^{2} dz\right)^{\frac{1}{2}}\Vert\langle\, \cdot\,\rangle^{-s}\Vert_{L^2}|\lambda_\xi|^{\gamma_p}.
\end{align*}
Because  $s>\frac{1}{2},$ we have the estimate $\Vert\langle\, \cdot\,\rangle^{-s}\Vert_{L^2}<\infty,$ and observing that from \eqref{Fcondition} we get the inequality 
\begin{equation}
\sup_{\overline{M}}\left(\int\limits_{\mathbb{R}} \langle z\rangle^{2s}|\mathscr{F}[\tau_m(x,\cdot)\psi(2^{-k}\cdot)](z)|^{2} dz\right)^{\frac{1}{2}}\leq \Vert \tau_m\Vert_{l.u. \mathcal{H}^s }\cdot 2^{-k(s-\frac{Q_m}{2})},
\end{equation}
 we deduce that
\begin{align*}
\Vert T_{m_k} \Vert_{\mathscr{B}(L^p)} &\lesssim \sum_{2^k\leq|\lambda_\xi|<2^{k+1}}\Vert \tau_m\Vert_{l.u. \mathcal{H}^s }\cdot 2^{-k(s-\frac{Q_m}{2})}|\lambda_\xi|^{\gamma_p}\\
&\asymp 2^{kQ-k(s-\frac{Q_m}{2})+k\gamma_p}=2^{-k(s-Q-\frac{Q_m}{2}-\gamma_p)}.
\end{align*}
Since
\begin{align*}
\Vert T_{0}f \Vert_{L^p(\overline{M})}\lesssim \Vert m(\cdot,0)\Vert_{L^\infty(\overline{M})}\Vert  f \Vert_{L^p(\overline{M})},\\
\end{align*}
we have the boundedness of $T_0$ on $L^p.$ It is clear that if we  want to end the proof, we need  to estimate  $I:=\sum_{k\geq 0} \Vert T_{m_k} \Vert_{\mathscr{B}(L^p(\overline{M}))}.$
Consequently, we obtain  $$0<I\lesssim \Vert T_{0} \Vert_{\mathscr{B}(L^p)}+ \sum_{k=1}^\infty2^{-k(s-Q-\frac{Q_m}{2}-\gamma_p)} \Vert \tau_m\Vert_{l.u.,  \mathcal{H}^s }<\infty,$$ for $s>Q+\frac{Q_m}{2}+\gamma_p.$ So, we have
$$\Vert T_{m} \Vert_{\mathscr{B}(L^p)}\leq  C (\Vert \tau_m \Vert_{l.u., \mathcal{H}^s }+\Vert m\Vert_{L^\infty}). $$ The proof is complete.
 \end{proof}
As an application of the H\"ormander-Mihlin theorem proved above, we will prove that the following Marcinkiewicz  type condition also implies the $L^p$ boundedness of pseudo-differential operators (defined  in \eqref{qf}).

\begin{theorem}\label{lpmulti}
Let $\overline{M}$ be a smooth manifold with boundary  and let  $A:C^\infty_L(\overline{M})\rightarrow C^\infty_L(\overline{M})$ be the pseudo-multiplier defined in \eqref{defipseudom}.  Let us assume that $\tau_m$  satisfies  the following   Marcinkiewicz  type condition
\begin{equation}\label{KNcondition}
  \sup_{x\in \overline{M}}  |\partial_\omega^{\alpha}\tau_m(x,\omega)|\leq C_{\alpha,\beta}(1+|\omega|)^{-|\alpha|},\,\,\,(x,\omega)\in \overline{M}\times \mathbb{R},
\end{equation} for $|\alpha|\leq \rho,$ where $\rho\in \mathbb{N},$ and $\rho>\max\{1/2,\gamma_p+Q+(1/2)\}.$ Then $A\equiv T_m:L^p(\overline{M})\rightarrow  L^p(\overline{M})$  extends to a  bounded linear operator for all $1<p<\infty.$
\end{theorem}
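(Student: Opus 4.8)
The plan is to deduce Theorem \ref{lpmulti} directly from the H\"ormander--Mihlin theorem, Theorem \ref{Hormandercondition}, by showing that the Marcinkiewicz-type condition \eqref{KNcondition} forces the H\"ormander condition \eqref{Fcondition} to hold with $Q_m=1$ and with $s$ any real number in the range $\max\{1/2,\gamma_p+Q+1/2\}<s\le\rho$. With this choice the hypothesis $s>\max\{1/2,\gamma_p+Q+Q_m/2\}$ of Theorem \ref{Hormandercondition} is satisfied, so the $L^p$-boundedness of $A\equiv T_m$ for all $1<p<\infty$ follows at once. Thus the entire argument reduces to a single scale-by-scale estimate on the localized symbol.

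First I would fix $x\in\overline{M}$ and a dyadic scale $r=2^k$ with $k\ge 2$; the low frequencies $|\omega|\lesssim 1$, corresponding to the piece $T_0$ in the proof of Theorem \ref{Hormandercondition}, are handled separately, since there $\tau_m(x,\cdot)(\psi_0+\psi_1)$ is smooth with fixed compact support and hence defines a bounded multiplier. On the support of $\psi(r^{-1}\cdot)$ one has $|\omega|\asymp r\gtrsim 1$, so that $1+|\omega|\asymp r$. After the change of variables $\omega=r\eta$, the quantity $\|\langle\cdot\rangle^{s}\mathscr{F}[\tau_m(x,\cdot)\psi(r^{-1}\cdot)]\|_{L^2(\mathbb{R})}=\|\tau_m(x,\cdot)\psi(r^{-1}\cdot)\|_{H^s(\mathbb{R})}$ is converted, up to the scaling factor recorded in the definition of $\|\cdot\|_{l.u.\mathcal{H}^s}$, into the $H^s$-norm of the rescaled symbol $h_{x,r}(\eta):=\tau_m(x,r\eta)\psi(\eta)$, which is supported in the fixed annulus $\operatorname{supp}\psi$.

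Next I would estimate $h_{x,r}$ in $H^s$ uniformly in $x$ and $r$. Taking $s\le\rho$ and applying the Leibniz and chain rules gives, for $j\le\rho$, the identity $\partial_\eta^{j}h_{x,r}(\eta)=\sum_{i\le j}\binom{j}{i}r^{\,i}(\partial_\omega^{i}\tau_m)(x,r\eta)\,\partial_\eta^{\,j-i}\psi(\eta)$. On $\operatorname{supp}\psi$ we have $|r\eta|\asymp r$, so \eqref{KNcondition} yields $|(\partial_\omega^{i}\tau_m)(x,r\eta)|\lesssim (1+|r\eta|)^{-i}\asymp r^{-i}$; the factors $r^{i}$ and $r^{-i}$ cancel, and since $\psi$ and its derivatives are bounded with fixed support we obtain $|\partial_\eta^{j}h_{x,r}(\eta)|\lesssim 1$, uniformly in $x\in\overline{M}$ and in $r\gtrsim1$. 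Summing over $j\le\rho$ shows $\sup_{x,\,r}\|h_{x,r}\|_{H^s(\mathbb{R})}<\infty$ for every $s\le\rho$, which is precisely the content of \eqref{Fcondition} with $Q_m=1$ once the normalizing power is reinstated.

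The genuinely delicate point is purely the scaling bookkeeping: one must check that the cancellation of $r^{i}$ against $r^{-i}$, combined with the $r^{-1}$ coming from the measure of the annulus $\{|\omega|\asymp r\}$ under $\omega=r\eta$, reproduces exactly the normalizing factor $r^{s-Q_m/2}$ with $Q_m=1$, the value $1$ being forced by the one-dimensionality of the spectral variable $\omega\in\mathbb{R}$ (in analogy with the exponent $n/2$ in the Euclidean condition \eqref{1}). No new analytic difficulty arises beyond this: the restriction $s>1/2$ is used only, as in Theorem \ref{Hormandercondition}, to ensure $\|\langle\cdot\rangle^{-s}\|_{L^2(\mathbb{R})}<\infty$, and the condition $\rho>\gamma_p+Q+1/2$ is exactly what renders the resulting dyadic series $\sum_k 2^{-k(s-Q-1/2-\gamma_p)}$ convergent. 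Hence the statement follows immediately from Theorem \ref{Hormandercondition}.
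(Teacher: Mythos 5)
Your proposal is correct and follows essentially the same route as the paper: the paper also reduces Theorem \ref{lpmulti} to Theorem \ref{Hormandercondition} with $Q_m=1$ and $s=\rho$, using the equivalent integer-order Sobolev norm $\sum_{|\beta|\le\rho}\Vert\partial^\beta_\xi g\Vert_{L^2(\mathbb{R})}$, the rescaling $\tau_m(x,2^k\cdot)\psi(\cdot)$ (your $h_{x,r}$, with the same scaling identity $2^{k(\rho-\frac12)}\Vert\tau_m(x,\cdot)\psi(2^{-k}\cdot)\Vert_{\mathcal{H}^\rho}=\Vert\tau_m(x,2^k\cdot)\psi(\cdot)\Vert_{\mathcal{H}^\rho}$ you flag as the bookkeeping step), and the Leibniz-rule cancellation of $2^{k|\alpha|}$ against $(1+|2^k\eta|)^{-|\alpha|}$ on $\operatorname{supp}\psi$. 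Your separate treatment of the low-frequency piece $T_0$ is already absorbed into Theorem \ref{Hormandercondition} in the paper's version, so no genuinely different idea is involved.
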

\begin{proof}
For the proof, we will use that the Sobolev  space $\mathcal{H}^ {s}(\mathbb{R})$ defined by those functions $g$ satisfying $ \Vert g\Vert_{ \mathcal{H}^s (\mathbb{R})}:=\Vert \langle z\rangle^s(\mathscr{F}g)\Vert_{L^2(\mathbb{R})}<\infty, $
has the equivalent norm
\begin{equation}
\Vert g\Vert'_{ \mathcal{H}^s (\mathbb{R})}:=\sum_{|\beta|\leq s}\Vert \partial_{\xi}^\beta g \Vert_{L^2(\mathbb{R})},
\end{equation}
when $s$ is an integer (see, e.g. \cite{Duo}, p. 163). We will show that
\begin{multline}\label{lemma}
\sup_{k>0,x\in\overline{M}}2^{k(\rho-\frac{1}{2})}\Vert \tau_m(x,\cdot)\psi(2^{-k}\cdot)\Vert_{\mathcal{H}^ \rho}=\sup_{k>0,x\in\overline{M}}\Vert \tau_m(x,2^{k}\cdot)\psi(\cdot)\Vert_{\mathcal{H}^ \rho}<\infty,
\end{multline}
provided that $\rho$ is an integer. From the estimate
\begin{equation}
\Vert \tau_m(x,2^{k}\cdot)\psi(\cdot)\Vert_{\mathcal{H}^ \rho}\asymp \Vert \tau_m(x,2^{k}\cdot)\psi(\cdot)\Vert'_{\mathcal{H}^ \rho}=\sum_{|\beta|\leq \rho}\Vert \partial_{\xi}^\beta( \tau(x,2^{k}\cdot)\psi(\cdot) )\Vert_{L^2(\mathbb{R})},
\end{equation}
we will estimate the $L^2$-norms of the derivatives $\partial_{\xi}^\beta( \tau_m(x,2^{k}\cdot)\psi(\cdot) )(\xi).$  By the  Leibniz rule we have
 $$ \partial_{\xi}^\beta( \tau_m(x,2^{k}\xi)\psi(\xi) )=\sum_{|\alpha|\leq |\beta|}2^{k|\alpha|}(\partial_{\xi}^\alpha \tau_m)(x,2^{k}\xi)\partial_\xi^{\beta-\alpha}\psi(\xi).$$
So, we obtain
\begin{equation}\label{dilatationestimate}
\Vert \partial_{\xi}^\beta( \tau_m(x,2^{k}\cdot)\psi(\cdot) )\Vert_{L^2}\leq  \sum_{|\alpha|\leq \rho}C_\alpha \Vert \partial_\xi^{\beta-\alpha}\psi(\cdot)\Vert_{L^2},
\end{equation}
where we have used that \eqref{KNcondition} implies the estimate $|2^{k|\alpha|}(\partial_{\xi}^\alpha \tau_m)(x,2^{k}\cdot)|\leq C_\alpha,$ for $k$ large enough. Now, \eqref{lemma}  follows by summing both sides of \eqref{dilatationestimate} over $|\beta|\leq \rho.$ Thus, if we use Theorem \ref{Hormandercondition} with $Q_{m}/2=1/2$ and $s=\rho,$ we finish the proof because the condition \eqref{KNcondition} implies that \eqref{Fcondition} holds true and consequently we obtain the boundedness of $A$ on $L^p(\overline{M})$.
\end{proof}

\subsection{$L^\infty$-$BMO$  boundedness for pseudo-differential operators}Next, we will study the $L^\infty(\overline{M})$-$BMO(\overline{M})$ boundedness for pseudo-differential operators on compact manifolds with boundary. In this subsection assume that $(\overline{M},g)$ is a geodesically complete Riemannian manifold.   So, let us fix the geodesical  geodesic distance $d(\cdot,\cdot)$ on $\overline{M}.$ Under the condition that $(\overline{M},g)$ is geodesically complete we can assure that every point in the boundary $\partial M$ can be connected with other points in $\overline{M}$  using a  geodesic path.  This allows us  to define balls on the boundary using the geodesic distance $d(\cdot,\cdot)$ defined by the Riemannian metric $g$ (see e.g. Pigola and Veronelli \cite{pv}). 

The ball of radius $r>0,$ is defined as 
\begin{equation*}
    B(x,r)=\{y\in \overline{M}:d(x,y)<r\}.
\end{equation*}Then the $BMO$ space on $\overline{M},$ $BMO(\overline{M}),$ is the space of locally integrable functions $f$ satisfying
\begin{equation*}
    \Vert f\Vert_{BMO(\overline{M})}:=\sup_{\mathbb{B}}\frac{1}{|\mathbb{B}|}\int\limits_{\mathbb{B}}|f(x)-f_{\mathbb{B}}|dx<\infty,\textnormal{ where  } f_{\mathbb{B}}:=\frac{1}{|\mathbb{B}|}\int\limits_{\mathbb{B}}f(x)dx,
\end{equation*}
and $\mathbb{B}$ ranges over all balls $B(x_{0},r),$ with $(x_0,r)\in \overline{M}\times (0,\infty).$
\begin{remark}
If $(\overline{M},g)$ is a Riemannian metric and with the geodesic distance $d(\cdot,\cdot),$ $(\overline{M},d)$ is a complete metric space, then $(\overline{M},g)$ is geodesically complete (see e.g. Theorem A and Corollary B of Pigola and Veronelli \cite{pv}).
\end{remark}

The Hardy space $H^{1}(\overline{M})$ will be defined via the atomic decomposition. Thus, $f\in H^{1}(\overline{M})$ if and only if $f$ can be expressed as $f=\sum_{j=1}^\infty c_{j}a_{j},$ where $\{c_j\}_{j=1}^\infty$ is a sequence in $\ell^1(\mathbb{N}),$ and every function $a_j$ is an atom, i.e., $a_j$ is supported in some ball $\mathbb{B}=B_j,$ $\int_{B_j}a_{j}(x)dx=0,$ and 
\begin{equation*}
    \Vert a_j\Vert_{L^\infty(G)}\leq \frac{1}{|B_j|}.
\end{equation*} The norm $\Vert f\Vert_{H^{1}(\overline{M})}$ is the infimum over  all possible series $\sum_{j=1}^\infty|c_j|.$ Furthermore, if $dx$ satisfies the doubling property, the space $BMO(\overline{M})$ is the dual of $H^{1}(\overline{M}),$ which can be deduced from the general work on complete metric spaces with the doubling property due to  Carbonaro,  Mauceri, and   Meda \cite{CMM}. 

\begin{itemize}
    \item[(a).] If $\phi\in BMO(\overline{M}), $ then $\Phi: f\mapsto \int\limits_{\overline{M}}f(x)\phi(x)dx,$ admits a bounded extension on $H^1(\overline{M}).$
    \item[(b).] Conversely, every continuous linear functional $\Phi$ on $H^1(\overline{M})$ arises as in $\textnormal{(a)}$ with a unique element $\phi\in BMO(\overline{M}).$
\end{itemize}

\begin{equation}\label{BMOnormduality}
 \Vert f\Vert_{BMO(\overline{M})}  =\sup_{\Vert g\Vert_{H^1}=1} 
\left| \int\limits_{\overline{M}}f(x)g(x)dx\right|,\,\,\,\,\,\Vert g \Vert_{H^1}  =\sup_{\Vert f\Vert_{BMO}=1} 
\left| \int\limits_{\overline{M}}f(x)g(x)dx\right|.
\end{equation}
So, the $L^\infty$-$BMO$ boundedness for pseudo-differential operators is considered as follows.
\begin{theorem}\label{bmo}
Let $\overline{M}$ be a geodesically complete Riemannian manifold with (possibly empty) boundary $\partial M,$   and let  $A:C^\infty_L(\overline{M})\rightarrow C^\infty_L(\overline{M})$ be the pseudo-multiplier defined in \eqref{defipseudom}. Let us assume that  one of the following two conditions hold.
\begin{itemize}
    \item[1.] 
    \begin{equation}\label{Fcondition22'}
    \Vert \tau_m \Vert_{l. u.  \mathcal{H}^s }=\sup_{r>0,x\in \overline{M}}r^{(s-\frac{Q_m}{2})}\Vert \langle \,\cdot\,\rangle^{s}\mathscr{F}[\tau_m(x,\cdot)\psi({r^{-1}\cdot})]\Vert_{L^2(\mathbb{R})}<\infty,
\end{equation}  for $s>\max\{1/2, (Q_m/2)+Q +\gamma_{\infty}\}.$ 
\item[2.] \begin{equation}\label{KNcondition2}
  \sup_{x\in \overline{M}}  |\partial_\omega^{\alpha}\tau_m(x,\omega)|\leq C_{\alpha,\beta}(1+|\omega|)^{-|\alpha|},\,\,\,(x,\omega)\in \overline{M}\times \mathbb{R},
\end{equation} for all $ \alpha\in\mathbb{N}^n_0,$ with $|\alpha|\leq\rho,$ where $\rho\in \mathbb{N},$ and $\rho>\max\{1/2, (Q_m/2)+Q +\gamma_{\infty}\}.$
\end{itemize}
Then, $A\equiv T_m:L^\infty(\overline{M})\rightarrow BMO(\overline{M})$ extends to a bounded operator.
\end{theorem}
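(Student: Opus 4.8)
The plan is to use the duality $BMO(\overline{M})=(H^1(\overline{M}))^\ast$ recorded in \eqref{BMOnormduality} to define $T_m$ on $L^\infty(\overline{M})$ — for which no approximation by $C^\infty_0(\overline{M})$ is available — as the Banach adjoint of its transpose acting on $H^1(\overline{M})$, and thereby to reduce the whole statement to the boundedness of that transpose $T_m^t$ from $H^1(\overline{M})$ into $L^1(\overline{M})$. Since $H^1(\overline{M})\hookrightarrow L^1(\overline{M})$ continuously, it in fact suffices to establish $T_m^t\colon L^1(\overline{M})\to L^1(\overline{M})$.

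First I would dispose of the two hypotheses by observing that condition 2 implies condition 1: repeating the dilation computation from the proof of Theorem \ref{lpmulti} — expanding $\partial_\xi^\beta(\tau_m(x,2^k\cdot)\psi(\cdot))$ by the Leibniz rule, using \eqref{KNcondition2} to bound $2^{k|\alpha|}(\partial_\xi^\alpha\tau_m)(x,2^k\cdot)$ uniformly in $x$ and $k$, and passing to the equivalent integer Sobolev norm — shows that the Marcinkiewicz condition forces the local-uniform H\"ormander norm \eqref{Fcondition22'} to be finite (with $s=\rho$). Hence one may argue under condition 1 alone.

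Under condition 1, I would run the same dyadic decomposition $m=\sum_k m_k$, $m_k(x,\xi)=\tau_m(x,|\lambda_\xi|)\psi_k(|\lambda_\xi|)$, as in the proof of Theorem \ref{Hormandercondition}, and estimate the transpose piecewise. The operator $T_{m_k}^t$ pairs $\overline{v_\xi(y)}$ against $m_k(x,\xi)u_\xi(x)$, so for $g\in L^1(\overline{M})$ one has
\[
\|T_{m_k}^t g\|_{L^1}\le \|g\|_{L^1}\sum_{2^k\le|\lambda_\xi|<2^{k+1}}\Big(\sup_{x\in\overline{M}}|m_k(x,\xi)|\Big)\,\|u_\xi\|_{L^\infty}\,\|v_\xi\|_{L^1}.
\]
Here the Cauchy--Schwarz/H\"ormander bound used in Theorem \ref{Hormandercondition} gives $\sup_x|m_k(x,\xi)|\lesssim \|\tau_m\|_{l.u.\,\mathcal{H}^s}\,2^{-k(s-Q_m/2)}$ (this is where $s>1/2$ enters), the endpoint $p=\infty$ of (HMI) gives $\|u_\xi\|_{L^\infty}\|v_\xi\|_{L^1}\lesssim|\lambda_\xi|^{\gamma_\infty}$, and the Weyl law (HMII) bounds the cardinality of the dyadic shell by $O(2^{kQ})$. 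Multiplying these yields $\|T_{m_k}^t\|_{L^1\to L^1}\lesssim 2^{-k(s-Q_m/2-Q-\gamma_\infty)}$, a geometric series summable exactly when $s>Q_m/2+Q+\gamma_\infty$; adding the trivially bounded low-frequency operator $T_0^t$ gives $T_m^t\colon L^1(\overline{M})\to L^1(\overline{M})$.

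Taking Banach adjoints and invoking $(H^1)^\ast=BMO$ then produces the bounded extension $T_m\colon L^\infty(\overline{M})\to BMO(\overline{M})$, since $\|T_m f\|_{BMO}=\sup_{\|g\|_{H^1}=1}|\int_{\overline{M}} f\,T_m^t g\,dx|\le\|f\|_{L^\infty}\sup_{\|g\|_{H^1}=1}\|T_m^t g\|_{L^1}\lesssim\|f\|_{L^\infty}$. I expect the only genuinely delicate ingredient to be the endpoint case of (HMI), i.e. that both $\|u_\xi\|_{L^\infty}$ and $\|v_\xi\|_{L^1}$ grow at most polynomially in $|\lambda_\xi|$ so that their product is controlled by $|\lambda_\xi|^{\gamma_\infty}$; the remainder is a formal transcription of the dyadic argument of Theorem \ref{Hormandercondition} to the endpoint $p=\infty$, with the $BMO$ target serving the technical purpose of supplying a well-defined image for $L^\infty$ data. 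One should also verify at the outset that $v_\xi\in L^1$ makes the pairing of $L^\infty$ functions against $H^1$ meaningful, which is precisely what the $p=\infty$ endpoint of (HMI) guarantees.
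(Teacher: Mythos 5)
Your proposal is correct, and every quantitative ingredient coincides with the paper's: the reduction of condition 2 to condition 1 via the dilation computation of Theorem \ref{lpmulti}, the dyadic decomposition $m=\sigma+\sum_{k\geq 2}m_k$ with $m_k(x,\xi)=\tau_m(x,|\lambda_\xi|)\psi_k(|\lambda_\xi|)$, the uniform symbol bound $\sup_x|m_k(x,\xi)|\lesssim \Vert \tau_m\Vert_{l.u.\,\mathcal{H}^s}\,2^{-k(s-Q_m/2)}$ obtained from Fourier inversion in the spectral variable plus Cauchy--Schwarz (where $s>1/2$ enters), the endpoint of (HMI) in the form $\Vert u_\xi\Vert_{L^\infty}\Vert v_\xi\Vert_{L^{1}}\lesssim|\lambda_\xi|^{\gamma_\infty}$, and the Weyl count $O(2^{kQ})$ per shell, giving the same geometric series convergent exactly for $s>Q_m/2+Q+\gamma_\infty$. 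What differs is the functional-analytic packaging. The paper estimates $\Vert T_{m_k}f\Vert_{BMO}$ directly: it tests each product $m_k(\cdot,\xi)u_\xi(\cdot)$ against an $H^1$ function via \eqref{BMOnormduality}, uses the atomic decomposition to get $\int_{\overline{M}}|u_\xi||\Omega|\,dx\leq(1+\varepsilon)\Vert u_\xi\Vert_{L^\infty(\overline{M})}$, and absorbs $|\widehat{f}(\xi)|\leq\Vert v_\xi\Vert_{L^1}\Vert f\Vert_{L^\infty}$. You instead transpose, prove the \emph{stronger} intermediate statement that $T_{m_k}^t$ is bounded on $L^1(\overline{M})$ with summable norms, and dualize once at the end through $H^1\hookrightarrow L^1$ and $(H^1)^\ast=BMO$. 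Your route is slightly more elementary (atoms appear only through the trivial embedding $H^1\hookrightarrow L^1$), it mirrors the direction of duality the paper itself exploits in Corollary \ref{bmoH11}, and it makes visible a point the paper leaves implicit: these crude shell-by-shell absolute-value estimates in fact yield $L^1$-boundedness of the transpose, i.e., essentially $L^\infty$-to-$L^\infty$ boundedness of $T_m$, so the $BMO$ target is not forced by the method but is the natural endpoint formulation (and the one that would survive a genuine Calder\'on--Zygmund refinement of the block estimates). Your closing cautions --- that $v_\xi\in L^1(\overline{M})$ and $u_\xi\in L^\infty(\overline{M})$ are needed to make the pairings and the transpose well defined, and that this is exactly the $p=\infty$ endpoint of (HMI) --- are apt, and since each shell contains finitely many $\xi$ by (HMII), the Fubini step justifying $\int (T_{m_k}f)\,g=\int f\,(T_{m_k}^tg)$ is unproblematic.
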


\begin{proof} Let us assume that $\tau_m$ satisfies \eqref{Fcondition22'}. This is the relevant assumption, because in Theorem \ref{lpmulti}, we have proved that a function satisfying \eqref{KNcondition2} also satisfies \eqref{Fcondition22'}.  Let us consider $f\in L^\infty(\overline{M}).$ Similar as in Theorem \ref{Hormandercondition}, we choose a function $\psi_0\in C^{\infty}_{0}(\mathbb{R}),$  $\psi_0(\lambda)=1,$  if $|\lambda|\leq 1,$ and $\psi(\lambda)=0,$ for $|\lambda|\geq 2.$ For every $j\geq 1,$ let us define $\psi_{j}(\lambda)=\psi_{0}(2^{-j}\lambda)-\psi_{0}(2^{-j+1}\lambda).$ Then we have
\begin{eqnarray}\label{deco1}
\sum_{l\in\mathbb{N}_{0}}\psi_{l}(\lambda)=1,\,\,\, \text{for every}\,\,\, \lambda>0.
\end{eqnarray}
We will decompose the symbol $\tau_m$ as
 \begin{equation}
  \tau_m(x,|\lambda_\xi|)=\tau_m(x,|\lambda_\xi|)(\psi_0(|\lambda_\xi|)+\psi_1(|\lambda_\xi|))+\sum_{k=2}^{\infty}m_k(x,\xi)\,\,  
 \end{equation} where we have denoted $$ m_k(x,\xi):= \tau_m(x,|\lambda_\xi|)\cdot \psi_{k}(|\lambda_\xi|).$$
Let us define  the sequence of  pseudo-differential operators   $T_{m_j},\,\,j\in \mathbb{N},$  associated to every symbol $m_j,$  for $j\geq 2,$ and by $T_{0} $ the operator with symbol   $$\sigma\equiv \tau_m(x,|\lambda_\xi|)(\psi_{0}(|\lambda_\xi|)+\psi_{1}(|\lambda_\xi|)).$$
Because, $f\in L^\infty(\overline{M})$ and for every $j,$ $T_{m_j}$ has symbol with compact support in the $\xi$-variable, $T_{m_j}:L^\infty(\overline{M})\rightarrow L^\infty(\overline{M})$ is bounded, and consequently $T_{m_j}f\in L^\infty(\overline{M})\subset BMO(\overline{M}).$ Now, because $T_{m_j}f\in BMO(\overline{M}),$ we will estimate its $BMO$-norm $\Vert T_{m_j}f\Vert _{BMO(\overline{M})}$.
By using that every symbol $m_k$ has variable $\xi$  supported in $\{\xi\in \mathcal{I}:2^{k-1}\leq |\lambda_\xi|\leq 2^{k+1}\},$ we have
\begin{equation*}
   T_{m_k}f(x) =\sum_{2^{k-1}\leq |\lambda_\xi|\leq 2^{k+1}}m_k(x,\xi)u_\xi(x)\widehat{f}(\xi),\,\,x\in\overline{M}.
\end{equation*}
Consequently,
\begin{equation}
    \Vert   T_{m_k}f\Vert_{BMO(\overline{M})}\leq \sum_{2^{k-1}\leq |\lambda_\xi|\leq 2^{k+1}} \Vert   m_k(\cdot,\xi)u_\xi(\cdot) \Vert_{BMO(\overline{M})}|\widehat{f}(\xi)|.
\end{equation}
From  \eqref{BMOnormduality} and by using the Euclidean Fourier inversion formula applied to $\tau_{m_k}(x,\cdot):= \tau_m(x,\cdot)\cdot \psi_{k}(\cdot)$ we have,
\begin{align*}
    \Vert   m_k(\cdot,\xi)u_\xi(\cdot) \Vert_{BMO(\overline{M})} &=\sup_{\Vert \Omega_0\Vert_{H^1}=1}\left|\int\limits_{\overline{M}}  m_k(x,\xi)u_\xi(x)\Omega_0(x)dx\right|\\
    &= \left|\int\limits_{\overline{M}}   \int\limits_{\mathbb{R}}e^{i2\pi |\lambda_\xi|\cdot z} \widehat{\tau_m}_k(x,z)dz\,u_\xi(x)\Omega(x)dx\right|\\
   &\leq\sup_{x\in \overline{M}}\int\limits_{\mathbb{R}}|\widehat{\tau_m}_{k}(x,z)|dz\times   \int\limits_{\overline{M}}|u_\xi(x)||\Omega(x)|dx,
\end{align*} for some $\Omega\in H^{1}(\overline{M}),$ such that $\Vert \Omega\Vert_{H^1}=1.$    Let us note that, for every $\varepsilon>0,$  there exists a decomposition of  $\Omega$ given by $$\Omega=\sum_{j=1}^\infty c_{j}a_{j},$$ where $\{c_j\}_{j=1}^\infty$ is a sequence in $\ell^1(\mathbb{N}),$ and every function $a_j$ is an atom, i.e., $a_j$ is supported in some ball $\mathbb{B}=B_j,$ satisfying the cancellation property: $\int_{B_j}a_{j}(x)dx=0,$ with 
\begin{equation*}
    \Vert a_j\Vert_{L^\infty(G)}\leq \frac{1}{|B_j|},
\end{equation*}and   $$\Vert \Omega\Vert_{H^{1}(\overline{M})}=1\leq \sum_{j=1}^\infty|c_j|<1+\varepsilon. $$ 
Observe that
\begin{align*}
    \int\limits_{\overline{M}}|u_\xi(x)||\Omega(x)|dx&\leq \sum_{j=1}^{\infty}|c_{j}|\Vert u_\xi\Vert_{L^\infty(\overline{M})}\int\limits_{\overline{M}}|a_j(x)|dx
    = \sum_{j=1}^{\infty}|c_{j}|\Vert u_\xi\Vert_{L^\infty(\overline{M})}\int\limits_{B_{j}}|a_j(x)|dx\\
    &\leq  \sum_{j=1}^{\infty}|c_{j}|\Vert u_\xi\Vert_{L^\infty(\overline{M})}\Vert a_{j} \Vert_{L^\infty(\overline{M})}|B_{j}|\\
    &\leq (1+\varepsilon)\Vert u_\xi\Vert_{L^\infty(\overline{M})}.
\end{align*}
By the Cauchy-Schwarz inequality, and the condition $s>1/2,$ we have
\begin{equation}
     \int\limits_{\mathbb{R}} |\widehat{\tau_m}_{k}(x,z)|dz\leq \left( \int\limits_{\mathbb{R}} \langle z\rangle^{2s}|\widehat{\tau_m}_{k}(x,z)|^2dz \right)^\frac{1}{2}\left( \int\limits_{\mathbb{R}} \langle z \rangle^{-2s}dz  \right)^\frac{1}{2}.
\end{equation} Consequently, we claim that
\begin{eqnarray}
\int\limits_{\mathbb{R}} |\widehat{\tau_m}_{k}(x,z)|dz\leq C\Vert \tau_m\Vert_{l.u. \mathcal{H}^s }\times 2^{-k(s-\frac{Q_m}{2})}.
\end{eqnarray}
Indeed, 
\begin{align*}
    \int\limits_{\mathbb{R}} |\widehat{\tau_m}_{k}(x,z)|dz &\lesssim \Vert \tau_{m_k}(x,\cdot) \Vert_{ \mathcal{H}^s (\mathbb{R})}= \Vert\tau_m(\cdot)\psi(2^{-k}|\cdot|) \Vert_{ \mathcal{H}^s (\mathbb{R})}\\
    &\lesssim  \Vert \tau_m\Vert_{l.u. \mathcal{H}^s }\times 2^{-k(s-\frac{Q_m}{2})}.
\end{align*}  So, we obtain
\begin{align*}
  & \Vert   m_k(\cdot,\xi)u_\xi(\cdot) \Vert_{BMO(\overline{M})}\leq \Vert \tau_m \Vert_{l.u.,  \mathcal{H}^s }\times 2^{-k(s-\frac{Q_m}{2})}\times  \int\limits_{\overline{M}}|u_\xi(x)||\Omega(x)|dx\\
   &\leq \Vert \tau_m \Vert_{l.u.,  \mathcal{H}^s }\times 2^{-k(s-\frac{Q_m}{2})}(1+\varepsilon)\Vert u_\xi\Vert_{L^\infty(\overline{M})}.\\
\end{align*} 
Thus, we can write
\begin{align*}
    \Vert   T_{m_k}f\Vert_{BMO(\overline{M})} &\leq \sum_{2^{k-1}\leq |\lambda_\xi|\leq 2^{k+1}}   \Vert \tau_m \Vert_{l.u.,  \mathcal{H}^s } 2^{-k(s-\frac{Q_m}{2})}  \Vert u_\xi\Vert_{L^\infty(\overline{M})}|\widehat{f}(\xi)|\\
    &\leq \sum_{2^{k-1}\leq |\lambda_\xi|\leq 2^{k+1}}   \Vert\tau_m \Vert_{l.u.,  \mathcal{H}^s } 2^{-k(s-\frac{Q_m}{2})}  \Vert u_\xi\Vert_{L^\infty(\overline{M})}\Vert u_\xi\Vert_{L^1}\Vert f \Vert_{L^\infty}.
\end{align*}
Thus, the analysis above implies the following estimate for the operator norm of $T_{m_k},$ for all $k\geq 2,$
\begin{align*}
  &  \Vert T_{m_k} \Vert_{ \mathscr{B}(L^\infty(\overline{M}),BMO(\overline{M})) }
    \lesssim \sum_{2^{k-1}\leq |\lambda_\xi|\leq 2^{k+1}}   \Vert \tau_m \Vert_{l.u.,  \mathcal{H}^s } 2^{-k(s-\frac{Q_m}{2})}  \Vert u_\xi\Vert_{L^\infty(\overline{M})}\Vert v_\xi\Vert_{L^1(\overline{M})}\\
    &\lesssim \sum_{2^{k-1}\leq |\lambda_\xi|\leq 2^{k+1}}  2^{k\gamma_{\infty} }\times  \Vert \tau_m\Vert_{l.u. \mathcal{H}^s }\times 2^{-k(s-\frac{Q_m}{2})}\\
   & \asymp 2^{kQ}\times2^{k\gamma_{\infty}}\times  \Vert \tau_m\Vert_{l.u. \mathcal{H}^s }\times 2^{-k(s-\frac{Q_m}{2})}.
\end{align*}
Now, by using that $T_0$ is an operator whose symbol has compact support in the $\xi$-variables, we conclude that $T_0$ is bounded from $L^\infty(\overline{M})$ to $BMO(\overline{M})$ and  $$\Vert T_0 \Vert_{\mathscr{B}(L^\infty(\overline{M}),BMO(\overline{M}))}\leq C\Vert m\Vert_{L^\infty}.$$ This analysis, allows us to estimate, the operator norm of $T_m$ as follows,
\begin{align*}
&\Vert T_m \Vert_{\mathscr{B}(L^\infty(\overline{M}),BMO(\overline{M}))}\\  &\leq \Vert T_0 \Vert_{\mathscr{B}(L^\infty(\overline{M}),BMO(\overline{M}))} +\sum_k  \Vert T_{m_k} \Vert_{\mathscr{B}(L^\infty(\overline{M}),BMO(\overline{M}))} \\
&\lesssim \Vert m\Vert_{L^\infty}+\sum_{k=1}^{\infty}2^{-k(s-Q-\frac{Q_m}{2} -\gamma_{\infty})}  \Vert \tau_m\Vert_{l.u. \mathcal{H}^s }\\
&\leq C(\Vert m\Vert_{L^\infty}+\Vert \tau_m\Vert_{l.u. \mathcal{H}^s })<\infty,
\end{align*}
provided that $s>(Q_m/2)+Q +\gamma_{\infty}.$ So, we have proved the $L^\infty$-$BMO$ boundedness of $T_m.$
\end{proof}

Now, observe that in view of the duality $(H^1)'=BMO,$ we can use the duality argument to deduce the following estimate for $L$-Fourier multipliers.
\begin{corollary}\label{bmoH11}
Let $\overline{M}$ be a geodesically complete Riemannian manifold with (possibly empty) boundary $\partial M,$   and let  $A:C^\infty_L(\overline{M})\rightarrow C^\infty_L(\overline{M})$ be an $L$-Fourier multiplier. Let us assume that  one of the following two conditions hold.
\begin{itemize}
    \item[1.]\label{Fcondition22} 
    \begin{equation}
    \Vert \tau_m \Vert_{l. u.  \mathcal{H}^s }=\sup_{r>0}r^{(s-\frac{Q_m}{2})}\Vert \langle \,\cdot\,\rangle^{s}\mathscr{F}[\tau_m(\cdot)\psi({r^{-1}\cdot})]\Vert_{L^2(\mathbb{R})}<\infty,
\end{equation}  for $s>\max\{1/2, \gamma_{\infty}+ Q+(Q_m/2)\}.$ 
\item[2.] \begin{equation}\label{KNcondition22}
   |\partial_\omega^{\alpha}\tau_m(\omega)|\leq C_{\alpha,\beta}(1+|\omega|)^{-|\alpha|},\,\,\,\omega\in  \mathbb{R},
\end{equation} for all $ \alpha\in\mathbb{N}^n_0,$ with $|\alpha|\leq\rho,$ where $\rho\in \mathbb{N},$ and $\rho>\max\{1/2, (Q_m/2)+Q +\gamma_{\infty}\}.$
\end{itemize}
 Then, $A$ admits a bounded extension from $L^\infty(\overline{M})$ into $   BMO(\overline{M})$ and from the Hardy space $H^1(\overline{M})$ to $ L^1(\overline{M})$.
\end{corollary}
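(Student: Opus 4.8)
The first assertion requires no work: an $L$-Fourier multiplier is exactly a pseudo-multiplier in the sense of Definition \ref{pseudomultiplier} whose symbol is independent of $x\in\overline{M}$, so Theorem \ref{bmo} (read with $\tau_m$ a function of $\omega$ alone) already delivers the bounded extension $A:L^\infty(\overline{M})\to BMO(\overline{M})$ under either \eqref{Fcondition22} or \eqref{KNcondition22}. The real content is therefore the $H^1$--$L^1$ bound, and the plan is to obtain it by dualising an $L^\infty$--$BMO$ estimate for the transpose, exploiting $BMO(\overline{M})=(H^1(\overline{M}))'$ as recorded in \eqref{BMOnormduality}.

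The first step is to identify the bilinear transpose $A^{t}$, defined on $C^\infty_L(\overline{M})$ by $\int_{\overline{M}}(Af)\,g\,dx=\int_{\overline{M}}f\,(A^{t}g)\,dx$. Using $\widehat{f}(\xi)=(f,v_\xi)_{L^2}$ together with the expansion \eqref{defipseudo}, a direct computation shows that $A^{t}$ is again a Fourier multiplier, but now attached to the adjoint operator $L^{*}$: it acts by $A^{t}g=\sum_{\xi\in\mathcal I}v_\xi\,\overline{\sigma(\xi)}\,(\mathcal F_{L^*}g)(\xi)$, so that the biorthogonal systems $\{u_\xi\}$ and $\{v_\xi\}$ exchange roles and the symbol is replaced by its complex conjugate. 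Since $|\overline{\sigma}|=|\sigma|$ and $\partial_\omega^\alpha\overline{\tau_m}=\overline{\partial_\omega^\alpha\tau_m}$, the hypotheses \eqref{Fcondition22} and \eqref{KNcondition22} are invariant under conjugation; moreover (HMI) bounds the $L^p$-norms of \emph{both} families $\{u_\xi\}$ and $\{v_\xi\}$ polynomially, and the Weyl law (HMII) involves only $|\lambda_\xi|$, hence is insensitive to the swap $L\leftrightarrow L^{*}$. Consequently Theorem \ref{bmo}, applied with $L$ replaced by $L^{*}$, yields that $A^{t}$ extends boundedly from $L^\infty(\overline{M})$ into $BMO(\overline{M})$.

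With this in hand the $H^1$--$L^1$ estimate follows formally. For $h\in C^\infty_L(\overline{M})\cap H^1(\overline{M})$ and $g\in L^\infty(\overline{M})$ one writes
\begin{equation*}
\left|\int\limits_{\overline{M}}(Ah)(x)\,g(x)\,dx\right|
=\left|\int\limits_{\overline{M}}h(x)\,(A^{t}g)(x)\,dx\right|
\leq \|h\|_{H^1(\overline{M})}\,\|A^{t}g\|_{BMO(\overline{M})}
\lesssim \|h\|_{H^1(\overline{M})}\,\|g\|_{L^\infty(\overline{M})},
\end{equation*}
where the middle step is the $H^1$--$BMO$ duality \eqref{BMOnormduality} and the last is the bound for $A^{t}$ just obtained. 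Taking the supremum over all $g$ with $\|g\|_{L^\infty(\overline{M})}=1$ and using $(L^1(\overline{M}))'=L^\infty(\overline{M})$ gives $\|Ah\|_{L^1(\overline{M})}\lesssim \|h\|_{H^1(\overline{M})}$; a density argument then extends $A$ to a bounded operator $H^1(\overline{M})\to L^1(\overline{M})$.

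The only place demanding care is the first step: one must verify that the transpose of an $L$-multiplier is genuinely an $L^{*}$-multiplier of the same type, and that the exponents feeding Theorem \ref{bmo} transfer correctly once $\{u_\xi\}$ and $\{v_\xi\}$ interchange. Concretely, the product of eigenfunction norms governing the $BMO$-estimate now becomes $\|v_\xi\|_{L^\infty(\overline{M})}\|u_\xi\|_{L^1(\overline{M})}$, which is still polynomially controlled through (HMI); this is the substantive check underlying the whole argument. The duality manipulation itself is entirely routine once it is run on the dense subspace $C^\infty_L(\overline{M})$ and then passed to the limit.
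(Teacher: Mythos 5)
Your proposal is correct and follows exactly the route the paper intends: the paper records no proof beyond the remark that the corollary follows from Theorem \ref{bmo} together with the duality $(H^1)'=BMO$, and your argument --- applying Theorem \ref{bmo} with an $x$-independent symbol for the $L^\infty$--$BMO$ bound, then dualising through the transpose, which is an $L^*$-Fourier multiplier with conjugated symbol (the same fact the paper invokes via \cite{DRT17} in the proof of Theorem \ref{Th:LpLq-1}) --- is precisely that duality argument made explicit. Your verification that after swapping the roles of $\{u_\xi\}$ and $\{v_\xi\}$ the governing product becomes $\|v_\xi\|_{L^\infty(\overline{M})}\|u_\xi\|_{L^1(\overline{M})}$, still polynomially controlled through (HMI), is the one substantive check, and you record it with more care than the paper does.
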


\section{$L^p$-$L^q$ boundedness of pseudo-differential operators for $1<p\leq 2\leq q<\infty$}\label{LpLq}
This section is devoted to the study of $L^p$-$L^q$ boundedness of the pseudo-differential operators and Fourier multipliers on manifolds $\overline{M}$. To accomplish this aim we will first prove some inequalities, namely, Paley inequality and Hausdorff-Young-Paley inequality in our setting which eventually yield us  the boundedness results. Before stating our main results of this section  we recall the definition of relevant $L^p$-spaces on the discrete set $\mathcal{I}$ from \cite{Ruz-Tok}. 

We describe the $p$-Lebesgue versions of the spaces of Fourier coefficients.
These spaces can be considered as the extension of the usual $\ell^p$ spaces on the discrete set
$\mathcal{I}$ adapted to the fact that we are dealing with biorthogonal systems.

Thus, we introduce the spaces $l^{p}_{ L}=l^{p}({ L})$ as the spaces of all
$a\in\mathcal S'(\ind)$ such that
\begin{equation}\label{EQ:norm1}
\|a\|_{l^{p}({L})}:=\left(\sum_{\xi\in\ind}| a(\xi)|^{p}
\|u_{\xi}\|^{2-p}_{L^{\infty}(\overline{M})} \right)^{1/p}<\infty,\quad \textrm{ for }\; 1\leq p\leq2,
\end{equation}
and
\begin{equation}\label{EQ:norm2}
\|a\|_{l^{p}({ L})}:=\left(\sum_{\xi\in\ind}| a(\xi)|^{p}
\|v_{\xi}\|^{2-p}_{L^{\infty}(\overline{M})} \right)^{1/p}<\infty,\quad \textrm{ for }\; 2\leq p<\infty,
\end{equation}
and, for $p=\infty$,
$$
\|a\|_{l^{\infty}({ L})}:=\sup_{\xi\in\ind}\left( |a(\xi)|\cdot
\|v_{\xi}\|^{-1}_{L^{\infty}(\overline{M})}\right)<\infty.
$$

We note that in the case of $p=2$, we have already defined the space $l^{2}({ L})$
by the norm \eqref{Hilb}. There is no problem with this since the norms
\eqref{EQ:norm1}-\eqref{EQ:norm2} with $p=2$ are equivalent to that in
\eqref{Hilb}. 

Analogously, we also introduce spaces $l^{p}_{ L^*}=l^{p}( L^*)$ as the spaces of
all $b\in\mathcal S'(\ind)$ such that the following norms are finite:
$$
\|b\|_{l^{p}( L^*)}=\left(\sum_{\xi\in\ind}|
b(\xi)|^{p} \|v_{\xi}\|^{2-p}_{L^{\infty}(\Omega)} \right)^{1/p},\quad \textrm{ for }\; 1\leq p\leq2,
$$
$$
\|b\|_{l^{p}( L^*)}=\left(\sum_{\xi\in\ind}|
b(\xi)|^{p} \|u_{\xi}\|^{2-p}_{L^{\infty}(\Omega)}
\right)^{1/p},\quad \textrm{ for }\; 2\leq p<\infty,
$$
$$
\|b\|_{l^{\infty}( L^*)}=\sup_{\xi\in\ind}\left(|b(\xi)|\cdot \|u_{\xi}\|^{-1}_{L^{\infty}(\Omega)}\right).
$$

For more discussion on this we refer to \cite{Ruz-Tok}.
The following Hausdorff-Young inequality is proved by the last two authors in \cite{Ruz-Tok}.

\begin{theorem}[Hausdorff-Young inequality]\label{haus} Let $1 \leq p \leq 2$ and $\frac{1}{p}+\frac{1}{p'}=1.$ There is a constant $C_p \geq 1$ such that for all $f \in L^p(\overline{M})$ we have 
 $$  \left( \sum_{\xi \in \mathcal{I}}   |\mathcal{F}_L(f)(\xi)|^{p'} \|v_{\xi}\|_{L^\infty(\overline{M})}^{2-p'} \right)^{\frac{1}{p'}}=\|\widehat{f}\|_{l^{p'}(L)} \leq C_p \|f\|_{L^p(\overline{M})}.$$
 Similarly, we also have 
 $$  \left( \sum_{\xi \in \mathcal{I}}   |\mathcal{F}_{L^*}(f)(\xi)|^{p'} \|u_{\xi}\|_{L^\infty(\overline{M})}^{2-p'} \right)^{\frac{1}{p'}}=\|\widehat{f}\|_{l^{p'}(L^*)} \leq C_p \|f\|_{L^p(\overline{M})}.$$
\end{theorem}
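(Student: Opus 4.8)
The plan is to deduce the inequality by interpolating between the two endpoints $p=1$ and $p=2$, exactly as in the classical Euclidean argument, the only new ingredient being the weights $\|v_\xi\|_{L^\infty(\overline{M})}^{2-p'}$ that the biorthogonal structure forces into the target spaces $l^{p'}(L)$. I would first record the endpoint $p=1$: for $f\in L^1(\overline{M})$ and any $\xi\in\mathcal{I}$, the definition \eqref{Lfourier} gives at once
$$|\mathcal{F}_L f(\xi)|=\left|\int_{\overline{M}}f(x)\overline{v_\xi(x)}\,dx\right|\leq \|v_\xi\|_{L^\infty(\overline{M})}\,\|f\|_{L^1(\overline{M})},$$
so that $|\mathcal{F}_L f(\xi)|\,\|v_\xi\|_{L^\infty(\overline{M})}^{-1}\leq \|f\|_{L^1(\overline{M})}$ for every $\xi$, i.e. $\mathcal{F}_L\colon L^1(\overline{M})\to l^\infty(L)$ is bounded with norm at most $1$. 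For the endpoint $p=2$ one has $p'=2$, where the weight is $\|v_\xi\|^{2-2}=1$ and $l^2(L)$ is the unweighted $\ell^2(\mathcal{I})$, whose norm is equivalent to the Hilbert norm \eqref{Hilb}. The Plancherel identity recorded right after \eqref{Hilb} then yields $\mathcal{F}_L\colon L^2(\overline{M})\to l^2(L)$ boundedly, the constant being exactly the equivalence constant between the two $l^2$ norms.

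The second step is interpolation. Choosing $0<\theta<1$ with $\frac1p=\frac{1-\theta}{2}+\frac{\theta}{1}$ and $\frac{1}{p'}=\frac{1-\theta}{2}+\frac{\theta}{\infty}=\frac{1-\theta}{2}$, I would apply the Stein--Weiss (change of weights) form of the Riesz--Thorin theorem to $\mathcal{F}_L$ between the two endpoint pairs, obtaining boundedness $L^p(\overline{M})\to \ell^{p'}(\mathcal{I},W)$ where the target weight $W$ is the geometric mean of the two endpoint weights. The decisive point is that this geometric mean is precisely $\|v_\xi\|_{L^\infty(\overline{M})}^{2-p'}$: writing the norm as $\big(\sum_\xi (|\mathcal{F}_L f(\xi)|\,W(\xi)^{1/p'})^{p'}\big)^{1/p'}$, the per-coordinate factor $W(\xi)^{1/p'}=\|v_\xi\|^{2/p'-1}$ interpolates affinely in $1/p'$ between $\|v_\xi\|^{0}=1$ at $p'=2$ and $\|v_\xi\|^{-1}$ at $p'=\infty$, which identifies $\ell^{p'}(\mathcal{I},W)$ with $l^{p'}(L)$ and gives the claimed estimate.

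The main obstacle is exactly this bookkeeping of the weights through interpolation: one must verify that $\{l^{p'}(L)\}_{2\le p'\le\infty}$ is a genuine interpolation scale with the stated weights, and be careful at the $\ell^\infty$ endpoint, where the supremum norm has to be interpolated and $\|v_\xi\|^{-1}$ enters as a pointwise multiplier rather than against a summable measure; the cleanest remedy is to run the complex-interpolation method directly on the weighted sequence spaces so that the endpoint $p'=\infty$ is handled as a limiting case. Once this is settled, the second inequality, involving $\mathcal{F}_{L^*}$ and the norms $\|u_\xi\|_{L^\infty(\overline{M})}$, follows verbatim by exchanging the roles of the biorthogonal systems $\{u_\xi\}\leftrightarrow\{v_\xi\}$ and of $L\leftrightarrow L^*$.
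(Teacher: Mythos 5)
Your proposal is correct, and you should know that the paper itself offers no proof of this statement: it is quoted from \cite{Ruz-Tok}, where it is established by precisely the scheme you describe --- the trivial bound $|\mathcal{F}_Lf(\xi)|\le \|v_\xi\|_{L^\infty(\overline{M})}\|f\|_{L^1(\overline{M})}$ at $p=1$, Plancherel at $p=2$ up to the Riesz-basis constants (which is where $C_p\ge 1$ comes from), and interpolation in between. So your route coincides with the source's. The one point worth tightening is exactly the one you flagged: the paper's interpolation tool (Theorem \ref{interpolation}) requires $p_0,p_1<\infty$ and so does not cover your $\ell^\infty$ endpoint with a multiplier weight. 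The cleanest fix is to eliminate the changing weights altogether: put the fixed measure $\mu(\{\xi\}):=\|v_\xi\|_{L^\infty(\overline{M})}^{2}$ on $\mathcal{I}$ and consider the linear operator $Tf(\xi):=\mathcal{F}_Lf(\xi)\,\|v_\xi\|_{L^\infty(\overline{M})}^{-1}$. Then $\|Tf\|_{L^\infty(\mathcal{I},\mu)}\le \|f\|_{L^1(\overline{M})}$ is your $p=1$ estimate, $\|Tf\|_{L^2(\mathcal{I},\mu)}=\big(\sum_{\xi\in\mathcal{I}}|\widehat f(\xi)|^2\big)^{1/2}\lesssim \|f\|_{L^2(\overline{M})}$ is the Riesz-basis Plancherel bound, and ordinary Riesz--Thorin with the \emph{same} measure at both endpoints yields $\|Tf\|_{L^{p'}(\mathcal{I},\mu)}\lesssim \|f\|_{L^p(\overline{M})}$; unwinding, $\|Tf\|_{L^{p'}(\mathcal{I},\mu)}^{p'}=\sum_{\xi\in\mathcal{I}}|\widehat f(\xi)|^{p'}\|v_\xi\|_{L^\infty(\overline{M})}^{-p'}\|v_\xi\|_{L^\infty(\overline{M})}^{2}$, which is exactly $\|\widehat f\|_{l^{p'}(L)}^{p'}$. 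This is the same computation as your ``geometric mean of the weights'' (note $(2-p')/p'=2/p'-1$ is affine in $1/p'$, as you observed), but it renders the $p'=\infty$ endpoint unproblematic, since the isometry $a\mapsto\{a_\xi\|v_\xi\|_{L^\infty(\overline{M})}^{-1}\}$ identifies your couple $\bigl(\ell^2(\mathcal{I}),\,\ell^\infty(\|v_\xi\|^{-1})\bigr)$ with $\bigl(L^2(\mu),L^\infty(\mu)\bigr)$. The $\mathcal{F}_{L^*}$ statement then follows by exchanging $u_\xi\leftrightarrow v_\xi$ and $L\leftrightarrow L^*$, exactly as you say.
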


In this direction, we present the the following Paley-type inequality.

\subsection{Hausdorff-Young-Paley inequality}  
In \cite{Hormander1960}, Lars H$\ddot{\text{o}}$rmander established a Paley-type inequality for the Fourier transform on $\mathbb{R}^n.$ The following inequality is an analogue of this inequality for the $L$-Fourier transform on manifolds. This inequality was established by the third author and his collaborators for compact homogeneous spaces and for locally compact unimodular groups  \cite{ARN,AR}.
 
\begin{theorem}[$L$-Paley-type inequality]\label{PI}
Let  $1<p \leq 2$ and $$\sup_{\xi \in \mathcal{I}} \left( \frac{\|v_\xi\|_{L^\infty( \overline{M})}}{\|u_\xi\|_{L^\infty(\overline{M})}} \right)<\infty.$$  If $\varphi(\xi)$ is a positive sequence in $\mathcal{I}$ such that 
 $$M_\varphi:= \sup_{t>0} t  \sum_{\overset{\xi \in \mathcal{I}}{t \leq \varphi(\xi) }}  \|u_\xi\|_{L^\infty(\overline{M})}^2  <\infty,  $$
 then for every $f \in {L^p(\overline{M})}$ we have
\begin{equation} \label{Vish5.1}
    \left( \sum_{\xi \in \mathcal{I}} |\mathcal{F}_L(f)(\xi)|^p \|u_{\xi}\|_{L^\infty(\overline{M})}^{2-p}  \varphi(\xi)^{2-p}   \right)^{\frac{1}{p}} \lesssim M_\varphi^{\frac{2-p}{p}} \|f\|_{L^p(\overline{M})}.
\end{equation}
\end{theorem}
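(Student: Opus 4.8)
The plan is to realize the left-hand side of \eqref{Vish5.1} as the $L^p$-norm of a single linear operator acting on a weighted counting-measure space, and then to bound that operator by Marcinkiewicz interpolation between a weak $(1,1)$ and a strong $(2,2)$ endpoint, following the classical scheme of H\"ormander adapted to the biorthogonal setting. Concretely, I would equip $\mathcal{I}$ with the measure $\nu$ assigning mass $\|u_\xi\|^2_{L^\infty(\overline{M})}\,\varphi(\xi)^2$ to each $\xi\in\mathcal{I}$, and introduce the operator
$$ Tf(\xi):=\frac{\mathcal{F}_L(f)(\xi)}{\|u_\xi\|_{L^\infty(\overline{M})}\,\varphi(\xi)}. $$
A direct computation shows that $\|Tf\|^p_{L^p(\mathcal{I},\nu)}$ equals the left-hand side of \eqref{Vish5.1}, so the claimed inequality is equivalent to the operator bound $\|Tf\|_{L^p(\mathcal{I},\nu)}\lesssim M_\varphi^{(2-p)/p}\,\|f\|_{L^p(\overline{M})}$.

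For the strong $(2,2)$ endpoint I would observe that the weights cancel, $\|Tf\|^2_{L^2(\mathcal{I},\nu)}=\sum_{\xi\in\mathcal{I}}|\mathcal{F}_L(f)(\xi)|^2$, and that this quantity is controlled by $\|f\|^2_{L^2(\overline{M})}$ by the Hausdorff--Young inequality (Theorem \ref{haus}) specialised to $p=2$. Hence $T\colon L^2(\overline{M})\to L^2(\mathcal{I},\nu)$ is bounded with a constant independent of $\varphi$.

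For the weak $(1,1)$ endpoint, which I expect to be the crux, I would use the trivial pointwise bound $|\mathcal{F}_L(f)(\xi)|\le \|f\|_{L^1(\overline{M})}\,\|v_\xi\|_{L^\infty(\overline{M})}$ together with the hypothesis $\sup_{\xi}\|v_\xi\|_{L^\infty(\overline{M})}/\|u_\xi\|_{L^\infty(\overline{M})}=:C_0<\infty$ from \eqref{maxcondition} to deduce $|Tf(\xi)|\le C_0\,\|f\|_{L^1(\overline{M})}/\varphi(\xi)$. Consequently the superlevel set satisfies $\{\xi:|Tf(\xi)|>\lambda\}\subset\{\xi:\varphi(\xi)<A\}$ with $A:=C_0\|f\|_{L^1(\overline{M})}/\lambda$, so that $\nu\{|Tf|>\lambda\}$ is dominated by $\sum_{\varphi(\xi)<A}\|u_\xi\|^2_{L^\infty(\overline{M})}\varphi(\xi)^2$. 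A layer-cake argument --- writing $\varphi(\xi)^2=\int_0^{\varphi(\xi)^2}ds$, exchanging sum and integral, and invoking the finiteness of $M_\varphi$ in the form $\sum_{\varphi(\xi)\ge \sqrt s}\|u_\xi\|^2_{L^\infty(\overline{M})}\le M_\varphi/\sqrt s$ --- then yields $\nu\{|Tf|>\lambda\}\le 2C_0 M_\varphi\,\|f\|_{L^1(\overline{M})}/\lambda$, the desired weak $(1,1)$ bound with constant of order $M_\varphi$.

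Finally I would feed these two endpoints into the Marcinkiewicz interpolation theorem. With interpolation parameter $\theta$ determined by $1/p=(1-\theta)+\theta/2$, one finds $1-\theta=(2-p)/p$, so the resulting strong $(p,p)$ constant is $\lesssim (C_0M_\varphi)^{(2-p)/p}\cdot 1^{\theta}\simeq M_\varphi^{(2-p)/p}$, exactly as asserted. The main obstacle I anticipate is the weak $(1,1)$ estimate: carrying out the layer-cake computation so that the power of $M_\varphi$ and the factor $\|f\|_{L^1(\overline{M})}/\lambda$ emerge correctly, and making sure that the ratio hypothesis \eqref{maxcondition} is invoked precisely where $\|v_\xi\|_{L^\infty(\overline{M})}$, coming from the $L^1$ bound on the Fourier coefficient, must be converted into the $\|u_\xi\|_{L^\infty(\overline{M})}$ appearing in the measure $\nu$.
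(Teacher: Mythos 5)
Your proposal is correct and matches the paper's own proof essentially step for step: the same weighted measure $\nu(\xi)=\varphi(\xi)^2\|u_\xi\|^2_{L^\infty(\overline{M})}$ on $\mathcal{I}$, the same operator, a strong $(2,2)$ bound via Plancherel, a weak $(1,1)$ bound from the pointwise estimate $|\mathcal{F}_L f(\xi)|\le \|v_\xi\|_{L^\infty(\overline{M})}\|f\|_{L^1(\overline{M})}$ followed by the same layer-cake computation, and Marcinkiewicz interpolation with the identical exponent bookkeeping $1-\theta=(2-p)/p$. The only cosmetic difference is that you absorb the ratio hypothesis $C_0=\sup_\xi \|v_\xi\|_{L^\infty(\overline{M})}/\|u_\xi\|_{L^\infty(\overline{M})}$ into the pointwise bound at the outset, whereas the paper keeps the factor $\|u_\xi\|_{L^\infty(\overline{M})}\|v_\xi\|^{-1}_{L^\infty(\overline{M})}$ inside the level-set condition and invokes the hypothesis when exchanging the sum and the integral; the two computations are identical in substance.
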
 
\begin{proof} 
Let $\nu$ be the measure on $\mathcal{I}$ defined by $\nu({\xi}):= \varphi^2(\xi) \|u_\xi \|_{L^\infty(\overline{M})}^2 $ for $\xi \in \mathcal{I}.$ Now, we define weighted spaces $L^p(\mathcal{I}, \nu),$ $1 \leq p \leq 2,$ as the spaces of complex (or real) sequences  $a=\{a_\xi\}_{\xi \in \mathcal{I}}$ such that 
\begin{equation}
    \|a\|_{L^p(\mathcal{I}, \nu)}:= \left( \sum_{\xi \in \mathcal{I}} |a_\xi|^p \varphi^2(\xi)\, \|u_\xi\|_{L^\infty(\overline{M})}^2  \right)^\frac{1}{p}<\infty.
\end{equation}
We show that the sublinear operator 
$A:L^p(\overline{M}): \rightarrow L^p(\mathcal{I}, \nu)$ defined by  $$ Af:= \left\{ \frac{|\mathcal{F}_L(f)(\xi)|} {\|u_\xi\|_{L^\infty(\overline{M})}  \varphi(\xi)} \right\}_{\xi \in \mathcal{I}}$$ is well-defined and bounded from $L^p(\overline{M})$ to $L^p(\mathcal{I}, \nu)$ for $1<p \leq 2.$ In other words, we claim that we have the estimate 
\begin{align} \label{vish5.3}
    \| Af \|_{L^p(\mathcal{I}, \,\nu)} &= \left( \sum_{\xi \in \mathcal{I}} \left( \frac{|\mathcal{F}_L(f)(\xi)|}{\|u_\xi\|_{L^\infty(\overline{M})}  \varphi(\xi)} \right)^p \varphi^2(\xi) \|u_\xi \|_{L^\infty(\overline{M})}^2 \right)^\frac{1}{p} \nonumber \\& \lesssim M_\varphi^{\frac{2-p}{p}} \|f\|_{L^p(\overline{M})},
\end{align} 
which would give us \eqref{Vish5.1} and where we set $$M_\varphi:= \sup_{t>0} t \sum_{\overset{\xi \in \mathcal{I}}{   \varphi(\xi) \geq t}} \|u_{\xi}\|_{L^\infty(\overline{M})}^2 .$$ To prove this we will show that $A$ is of weak-type $(2,2)$ and of weak-type $(1,1).$ More precisely, with the distribution function,  
$$\nu_{\mathcal{I}}(y; Af)= \sum_{\overset{\xi \in \mathcal{I}}{|Af (\xi)|\geq  y}} \|u_\xi\|_{L^\infty(\overline{M})}^2   \varphi^2(\xi)$$
we show that 
\begin{equation} \label{vish5.4}
    \nu_{\mathcal{I}}(y; Af) \leq \left( \frac{M_2 \|f\|_{L^2(\overline{M})}}{y} \right)^2 \,\,\,\,\,\,\text{with norm}\,\, M_2=1,
\end{equation}
\begin{equation} \label{vish5.5}
    \nu_{\mathcal{I}}(y; Af) \leq \frac{M_1 \|f\|_{L^1(\overline{M})}}{y}\,\,\,\,\,\,\text{with norm}\,\, M_1=M_\varphi.
\end{equation} 
Then \eqref{vish5.3} will follow by the Marcinkiewicz interpolation theorem. Now, to show \eqref{vish5.4}, using Plancherel identity we get 
\begin{align*}
    y^2 \nu_{\mathcal{I}}(y; Af)&\leq \sup_{y>0}y^2 \nu_{\mathcal{I}}(y; Af)=: \|Af\|^2_{L^{2, \infty}(\mathcal{I}, \nu)}  \leq \|Af\|^2_{L^2(\mathcal{I}, \nu)} \\&= \sum_{\xi \in \mathcal{I}} \left( \frac{|\mathcal{F}_L(f)(\xi)|}{\varphi(\xi) {\|u_\xi\|_{L^\infty(\overline{M})} } } \right)^2 \varphi^2(\xi) \|u_\xi\|_{L^\infty(\overline{M})}^2   \\&= \sum_{\xi \in \mathcal{I}} |\mathcal{F}_L(f)(\xi)|^2 = \|\mathcal{F}_L(f)\|_{l^2(L)}^2= \|f\|_{L^2(\overline{M})}^2.
\end{align*}
Thus, $A$ is type $(2,2)$ with norm $M_2 \leq 1.$ Further, we show that $A$ is of weak type $(1,1)$ with norm $M_1=M_\varphi$; more precisely, we show that 
\begin{equation}
    \nu_{\mathcal{I}} \{\xi \in \mathcal{I}: \frac{|\mathcal{F}_L(f)(\xi)|}{\varphi(\xi) {\|u_\xi\|_{L^\infty(\overline{M})} }}>y \} \lesssim M_\varphi \frac{\|f\|_{L^1(\overline{M})}}{y}.
\end{equation}
Here, the left hand side is the weighted sum $\sum \varphi^2(\xi) \|u_\xi\|_{L^\infty(\overline{M})}^2 $ taken over those $\xi \in \mathcal{I}$ such that $\frac{|\mathcal{F}_L(f)(\xi)|}{\varphi(\xi) {\|u_\xi\|_{L^\infty(\overline{M})} }}>y.$ From the definition of the Fourier transform it follows that 
 $$|\mathcal{F}_L(f)(\xi)| \leq  \|v_\xi\|_{L^\infty(\overline{M})} \|f\|_{L^1(\overline{M})}.$$
 Therefore, we get 
 $$ y<\frac{|\mathcal{F}_L(f)(\xi)|}{\varphi(\xi) {\|u_\xi\|_{L^\infty(\overline{M})} }} \leq \frac{\|f\|_{L^1(\overline{M})}}{\varphi(\xi) {\|u_\xi\|_{L^\infty(\overline{M})} \|v_\xi\|^{-1}_{L^\infty(\overline{M})}}}.$$ 
 Using this, we get 
 $$ \left\{\xi \in \mathcal{I}: \frac{|\mathcal{F}_L(f)(\xi)|}{\varphi(\xi) {\|u_\xi\|_{L^\infty(\overline{M})} }}>y \right\} \subset \left\{\xi \in \mathcal{I}: \frac{\|f\|_{L^1(\overline{M})}}{\varphi(\xi){\|u_\xi\|_{L^\infty(\overline{M})} \|v_\xi\|^{-1}_{L^\infty(\overline{M})}}}>y \right\} $$ for any $y>0.$ Consequently, 
 $$\nu\left\{\xi \in \mathcal{I}: \frac{|\mathcal{F}_L(f)(\xi)|}{\varphi(\xi) {\|u_\xi\|_{L^\infty(\overline{M})} }}>y \right\} \leq \nu \left\{\xi \in \mathcal{I}: \frac{\|f\|_{L^1(\overline{M})}}{\varphi(\xi){\|u_\xi\|_{L^\infty(\overline{M})} \|v_\xi\|^{-1}_{L^\infty(\overline{M})}}}>y \right\}.$$
 By setting $w:= \frac{\|f\|_{L^1(\overline{M})}}{y},$ we get 
 \begin{equation}
     \nu\left\{\xi \in \mathcal{I}: \frac{|\mathcal{F}_L(f)(\xi)|}{\varphi(\xi) {\|u_\xi\|_{L^\infty(\overline{M})} \|v_\xi\|^{-1}_{L^\infty(\overline{M})}}}>y \right\} \nonumber
 \end{equation}
 \begin{equation}
      \leq \sum_{\overset{\xi \in \mathcal{I}}{\varphi(\xi) {\|u_\xi\|_{L^\infty(\overline{M})} \|v_\xi\|^{-1}_{L^\infty(\overline{M})}} \leq w}} \varphi^2(\xi) \|u_\xi\|^2_{L^\infty(\overline{M})} .
 \end{equation}
 We claim that 
 \begin{equation}\label{vish5.8}
    \sum_{\overset{\xi \in \mathcal{I}}{\varphi(\xi) {\|u_\xi\|_{L^\infty(\overline{M})} \|v_\xi\|^{-1}_{L^\infty(\overline{M})}} \leq w}} \varphi^2(\xi) \|u_\xi\|^2_{L^\infty(\overline{M})}  \lesssim M_\varphi w.
 \end{equation}
 In fact, we have 
 $$ \sum_{\overset{\xi \in \mathcal{I}}{\varphi(\xi) {\|u_\xi\|_{L^\infty(\overline{M})} \|v_\xi\|^{-1}_{L^\infty(\overline{M})}} \leq w}} \varphi^2(\xi) \|u_\xi\|^2_{L^\infty(\overline{M})}  $$
 
 $$=\sum_{\overset{\xi \in \mathcal{I}}{\varphi(\xi) {\|u_\xi\|_{L^\infty(\overline{M})} \|v_\xi\|^{-1}_{L^\infty(\overline{M})}} \leq w}}  \|u_\xi\|^2_{L^\infty(\overline{M})}  \int\limits_{0}^{\varphi^2(\xi)} d\tau.$$
 
 We can interchange sum and integration with the fact that $c:=\sup_{\xi \in \mathcal{I}} \left( \frac{\|v_\xi\|_{L^\infty( \overline{M})}}{\|u_\xi\|_{L^\infty(\overline{M})}} \right)<\infty$ to get 
 $$ \sum_{\overset{\xi \in \mathcal{I}}{\varphi(\xi) {\|u_\xi\|_{L^\infty(\overline{M})} \|v_\xi\|^{-1}_{L^\infty(\overline{M})}} \leq w}}  \|u_\xi\|^2_{L^\infty(\overline{M})}  \int\limits_{0}^{\varphi^2(\xi)} d\tau $$
 
 $$\leq \int\limits_{0}^{w^2c^2} d\tau \sum_{\overset{\xi \in \mathcal{I}}{\tau^{\frac{1}{2}} \leq \varphi(\xi)  \leq w{\|u_\xi\|^{-1}_{L^\infty(\overline{M})} \|v_\xi\|_{L^\infty(\overline{M})}}}}  \|u_\xi\|^2_{L^\infty(\overline{M})} .$$
 Further, we make a substitution $\tau =t^2,$ yielding
 $$\int\limits_{0}^{w^2c^2} d\tau \sum_{\overset{\xi \in \mathcal{I}}{\tau^{\frac{1}{2}} \leq \varphi(\xi)  \leq w{\|u_\xi\|^{-1}_{L^\infty(\overline{M})} \|v_\xi\|_{L^\infty(\overline{M})}}}}  \|u_\xi\|^2_{L^\infty(\overline{M})} $$
 
 $$=2 \int\limits_{0}^{w c} t dt \sum_{\overset{\xi \in \mathcal{I}}{t \leq \varphi(\xi)  \leq w{\|u_\xi\|^{-1}_{L^\infty(\overline{M})} \|v_\xi\|_{L^\infty(\overline{M})}} }} \|u_\xi\|^2_{L^\infty(\overline{M})} $$
 
 $$ \leq 2 \int\limits_{0}^{w c} t \,dt \sum_{\overset{\xi \in \mathcal{I}}{t \leq \varphi(\xi)  }}  \|u_\xi\|^2_{L^\infty(\overline{M})}. $$

 Since $$ t \sum_{\overset{\xi \in \mathcal{I}}{t \leq \varphi(\xi) }}  \|u_\xi\|^2_{L^\infty(\overline{M})}   
 \leq \sup_{t>0} t  \sum_{\overset{\xi \in \mathcal{I}}{t \leq \varphi(\xi) }}  \|u_\xi\|^2_{L^\infty(\overline{M})}   = M_\varphi$$
 is finite by assumption, we have
 $$   2 \int\limits_{0}^{w c} t dt \sum_{\overset{\xi \in \mathcal{I}}{t \leq \varphi(\xi) }}  \|u_\xi\|^2_{L^\infty(\overline{M})}  \lesssim M_\varphi w= \frac{M_\varphi\|f\|_{L^1(\overline{M})}}{y} . $$
 This proves \eqref{vish5.8}. Therefore, we have proved inequalities \eqref{vish5.4} and \eqref{vish5.5}.  Then by using the Marcinkiewicz interpolation theorem with $p_1=1,$ $p_2=2$ and $\frac{1}{p}= 1-\theta+\frac{\theta}{2}$ we now obtain 
 \begin{align*} 
   & \left( \sum_{\xi \in \mathcal{I}} \left( \frac{|\mathcal{F}_L(f)(\xi)|}{\|u_\xi\|_{L^\infty(\overline{M})} \varphi(\xi)} \right)^p  \|u_{\xi}\|^2_{L^\infty(\overline{M})}  \varphi(\xi)^{2}  \right)^{\frac{1}{p}} \\ & = \|Af\|_{L^p(\mathcal{I}, \nu)} \lesssim  M_\varphi^{\frac{2-p}{p}} \|f\|_{L^p(\overline{M})},
\end{align*} yielding \eqref{Vish5.1}.
\end{proof}
Now, we state the Paley inequality associated with the $L^*$-Fourier transform. The proof is verbatim to the Paley inequality for $L$-Fourier transform above with the use $L^*$-Fourier transform and $l^p(L^*)$-spaces. 
\begin{theorem}[$L^*$-Paley-type inequality]\label{PIL^*}
Let  $1<p \leq 2$ and $$\sup_{\xi \in \mathcal{I}} \left( \frac{\|u_\xi\|_{L^\infty( \overline{M})}}{\|v_\xi\|_{L^\infty(\overline{M})}} \right)<\infty.$$  If $\varphi(\xi)$ is a positive sequence in $\mathcal{I}$ such that 
 $$M_\varphi:= \sup_{t>0} t  \sum_{\overset{\xi \in \mathcal{I}}{t \leq \varphi(\xi) }}  \|v_\xi\|_{L^\infty(\overline{M})}^2  <\infty,  $$
 then for every $f \in {L^p(\overline{M})}$ we have
\begin{equation} \label{Vish5.1L^*}
    \left( \sum_{\xi \in \mathcal{I}} |\mathcal{F}_{L^*}(f)(\xi)|^p \|v_{\xi}\|_{L^\infty(\overline{M})}^{2-p}  \varphi(\xi)^{2-p}   \right)^{\frac{1}{p}} \lesssim M_\varphi^{\frac{2-p}{p}} \|f\|_{L^p(\overline{M})}.
\end{equation}
\end{theorem}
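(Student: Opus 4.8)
The plan is to run the proof of Theorem \ref{PI} essentially verbatim, exchanging throughout the roles of the two biorthogonal systems: replace $u_\xi$ by $v_\xi$, the $L$-Fourier transform $\mathcal{F}_L$ by its conjugate $\mathcal{F}_{L^*}$, and the spaces $l^p(L)$ by $l^p(L^*)$. Concretely, I would introduce the measure $\nu$ on $\mathcal{I}$ given by $\nu(\{\xi\}):=\varphi^2(\xi)\|v_\xi\|_{L^\infty(\overline{M})}^2$, the associated weighted spaces $L^p(\mathcal{I},\nu)$ with norm $\|a\|_{L^p(\mathcal{I},\nu)}=\left(\sum_\xi |a_\xi|^p\varphi^2(\xi)\|v_\xi\|_{L^\infty(\overline{M})}^2\right)^{1/p}$, and the sublinear operator
$$ Af:=\left\{\frac{|\mathcal{F}_{L^*}(f)(\xi)|}{\|v_\xi\|_{L^\infty(\overline{M})}\,\varphi(\xi)}\right\}_{\xi\in\mathcal{I}}. $$
As in the proof of Theorem \ref{PI}, establishing \eqref{Vish5.1L^*} reduces to proving that $A$ is bounded from $L^p(\overline{M})$ to $L^p(\mathcal{I},\nu)$ for $1<p\leq 2$, and this I would obtain by interpolating a weak-type $(2,2)$ and a weak-type $(1,1)$ bound with the Marcinkiewicz interpolation theorem.

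For the weak-type $(2,2)$ estimate the only input needed is the Plancherel identity for the conjugate transform, $\sum_\xi|\mathcal{F}_{L^*}(f)(\xi)|^2=\|\mathcal{F}_{L^*}(f)\|_{l^2(L^*)}^2=\|f\|_{L^2(\overline{M})}^2$, where the $l^2(L^*)$-weight $\|v_\xi\|^{2-p}_{L^\infty(\overline{M})}$ reduces to $1$ at $p=2$. Substituting the definition of $A$, the factor $\varphi^2(\xi)\|v_\xi\|_{L^\infty(\overline{M})}^2$ in $\nu$ cancels exactly against the denominator of $Af$, so that $\|Af\|_{L^2(\mathcal{I},\nu)}^2=\sum_\xi|\mathcal{F}_{L^*}(f)(\xi)|^2=\|f\|_{L^2(\overline{M})}^2$, giving type $(2,2)$ with norm $M_2\leq 1$.

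The weak-type $(1,1)$ estimate is the step where the hypothesis of the theorem enters and where the $L^*$-adapted bounds must be used. From the very definition $\mathcal{F}_{L^*}(f)(\xi)=\int_{\overline{M}}f(x)\overline{u_\xi(x)}\,dx$ one gets $|\mathcal{F}_{L^*}(f)(\xi)|\leq\|u_\xi\|_{L^\infty(\overline{M})}\|f\|_{L^1(\overline{M})}$; note that it is now $\|u_\xi\|_{L^\infty(\overline{M})}$ that appears, precisely because $\mathcal{F}_{L^*}$ pairs $f$ against $\overline{u_\xi}$, which is exactly why the relevant smallness assumption switches to $c:=\sup_{\xi}\|u_\xi\|_{L^\infty(\overline{M})}/\|v_\xi\|_{L^\infty(\overline{M})}<\infty$. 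Using this bound, the sublevel set $\{\xi:|Af(\xi)|>y\}$ is contained in $\{\xi:\|f\|_{L^1(\overline{M})}/(\varphi(\xi)\|v_\xi\|_{L^\infty(\overline{M})}\|u_\xi\|^{-1}_{L^\infty(\overline{M})})>y\}$; writing $w:=\|f\|_{L^1(\overline{M})}/y$, summing the weight $\varphi^2(\xi)\|v_\xi\|_{L^\infty(\overline{M})}^2$ over this set via $\int_0^{\varphi^2(\xi)}d\tau$, interchanging sum and integral using the finiteness of $c$ to bound the upper limit by $wc$, substituting $\tau=t^2$, and invoking the assumption $M_\varphi=\sup_{t>0}t\sum_{t\leq\varphi(\xi)}\|v_\xi\|_{L^\infty(\overline{M})}^2<\infty$, yields $\nu_{\mathcal{I}}(y;Af)\lesssim M_\varphi\|f\|_{L^1(\overline{M})}/y$, i.e. weak type $(1,1)$ with norm $M_1=M_\varphi$.

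Since the argument is a line-by-line transcription of the proof of Theorem \ref{PI}, there is no genuinely new obstacle; the delicate points are purely bookkeeping, namely that the weight defining $\nu$ and $A$ must be built from $\|v_\xi\|_{L^\infty(\overline{M})}$ (matching both $M_\varphi$ and the conclusion \eqref{Vish5.1L^*}), and that the two a priori bounds are the $\mathcal{F}_{L^*}$-versions: the Plancherel identity for $\mathcal{F}_{L^*}$ in the $(2,2)$ estimate and $|\mathcal{F}_{L^*}(f)(\xi)|\leq\|u_\xi\|_{L^\infty(\overline{M})}\|f\|_{L^1(\overline{M})}$ in the $(1,1)$ estimate, the latter forcing the hypothesis $\sup_\xi\|u_\xi\|_{L^\infty(\overline{M})}/\|v_\xi\|_{L^\infty(\overline{M})}<\infty$ rather than its reciprocal. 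With these substitutions made consistently, Marcinkiewicz interpolation with $p_1=1$, $p_2=2$ and $\tfrac{1}{p}=1-\theta+\tfrac{\theta}{2}$ gives $\|Af\|_{L^p(\mathcal{I},\nu)}\lesssim M_\varphi^{\frac{2-p}{p}}\|f\|_{L^p(\overline{M})}$, which is exactly \eqref{Vish5.1L^*}.
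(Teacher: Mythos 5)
Your proposal is correct and coincides with the paper's own treatment: the paper proves Theorem \ref{PIL^*} by declaring the argument verbatim to that of Theorem \ref{PI} with $u_\xi$ and $v_\xi$ (and $\mathcal{F}_L$, $l^p(L)$ replaced by $\mathcal{F}_{L^*}$, $l^p(L^*)$) interchanged, which is exactly the substitution you carry out. In particular you correctly identify the one place where the swapped hypothesis $\sup_\xi \|u_\xi\|_{L^\infty(\overline{M})}/\|v_\xi\|_{L^\infty(\overline{M})}<\infty$ is forced, namely the weak-type $(1,1)$ bound via $|\mathcal{F}_{L^*}(f)(\xi)|\leq\|u_\xi\|_{L^\infty(\overline{M})}\|f\|_{L^1(\overline{M})}$.
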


The following theorem \cite{BL} is useful  to obtain one of our crucial results.

\begin{theorem} \label{interpolation} Let $d\mu_0(x)= \omega_0(x) d\mu(x),$ $d\mu_1(x)= \omega_1(x) d\mu(x),$ and write $L^p(\omega)=L^p(\omega d\mu)$ for the weight $\omega.$ Suppose that $0<p_0, p_1< \infty.$ Then 
$$(L^{p_0}(\omega_0), L^{p_1}(\omega_1))_{\theta, p}=L^p(\omega),$$ where $0<\theta<1, \, \frac{1}{p}= \frac{1-\theta}{p_0}+\frac{\theta}{p_1}$ and $\omega= \omega_0^{\frac{p(1-\theta)}{p_0}} \omega_1^{\frac{p\theta}{p_1}}.$
\end{theorem} 

The following corollary is immediate.

\begin{corollary}\label{interpolationoperator} Let $d\mu_0(x)= \omega_0(x) d\mu(x),$ $d\mu_1(x)= \omega_1(x) d\mu(x).$ Suppose that $0<p_0, p_1< \infty.$  If a continuous linear operator $A$ admits bounded extensions, $A: L^p(Y,\mu)\rightarrow L^{p_0}(\omega_0) $ and $A: L^p(Y,\mu)\rightarrow L^{p_1}(\omega_1) ,$   then there exists a bounded extension $A: L^p(Y,\mu)\rightarrow L^{b}(\omega) $ of $A$, where  $0<\theta<1, \, \frac{1}{b}= \frac{1-\theta}{p_0}+\frac{\theta}{p_1}$ and 
 $\omega= \omega_0^{\frac{b(1-\theta)}{p_0}} \omega_1^{\frac{b\theta}{p_1}}.$
\end{corollary} 

Using the above corollary we now present Hausdorff-Young-Paley inequality.

\begin{theorem}[$L$-Hausdorff-Young-Paley inequality] \label{HYP} Let $1<p\leq 2,$ and let   $1<p \leq b \leq p' \leq \infty,$ where $p'= \frac{p}{p-1}$ and  $$\sup_{\xi \in \mathcal{I}} \left( \frac{\|v_\xi\|_{L^\infty( \overline{M})}}{\|u_\xi\|_{L^\infty(\overline{M})}} \right)<\infty.$$ If $\varphi(\xi)$ is a positive sequence in $\mathcal{I}$ such that 
 $$M_\varphi:= \sup_{t>0} t  \sum_{\overset{\xi \in \mathcal{I}}{t \leq \varphi(\xi) }}  \|u_\xi\|^2_{L^\infty(\overline{M})}    $$
is finite, then for every $f \in {L^p(\overline{M})}$ 
 we have
\begin{equation} \label{Vish5.9}
    \left( \sum_{\xi \in \mathcal{I}}  \left( |\mathcal{F}_Lf(\xi)| \varphi(\xi)^{\frac{1}{b}-\frac{1}{p'}} \right)^b \|u_{\xi}\|_{L^\infty(\overline{M})}^{1-\frac{b}{p'}}  \|v_{\xi}\|_{L^\infty(\overline{M})}^{1- \frac{b}{p}}     \right)^{\frac{1}{b}} \lesssim_p M_\varphi^{\frac{1}{b}-\frac{1}{p'}} \|f\|_{L^p(\overline{M})}.
\end{equation}
\end{theorem}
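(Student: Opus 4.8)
The plan is to derive \eqref{Vish5.9} by real interpolation of the two endpoint estimates already available: the Hausdorff--Young inequality (Theorem \ref{haus}) and the $L$-Paley inequality (Theorem \ref{PI}). I regard the $L$-Fourier transform $A\colon f\mapsto \mathcal{F}_Lf$ as a single linear operator from $L^p(\overline{M})$ into weighted sequence spaces over $\mathcal{I}$, taking the counting measure on $\mathcal{I}$ as the common base measure $d\mu$ in Theorem \ref{interpolation} and Corollary \ref{interpolationoperator}. The case $p=2$ (forcing $b=2$) is just Plancherel, so I assume $1<p<2$.

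First I would record the two endpoints in the weighted form $L^{p_i}(\omega_i)=L^{p_i}(\omega_i\,d\mu)$. Theorem \ref{haus} states that $A\colon L^p(\overline{M})\to L^{p_0}(\omega_0)$ is bounded with $p_0=p'$ and $\omega_0(\xi)=\|v_\xi\|_{L^\infty(\overline{M})}^{2-p'}$, with operator norm $\le C_p$. Theorem \ref{PI}, whose hypotheses on $\sup_\xi(\|v_\xi\|/\|u_\xi\|)$ and on $M_\varphi$ are exactly those assumed here, states that $A\colon L^p(\overline{M})\to L^{p_1}(\omega_1)$ is bounded with $p_1=p$ and $\omega_1(\xi)=\|u_\xi\|_{L^\infty(\overline{M})}^{2-p}\varphi(\xi)^{2-p}$, with operator norm $\lesssim M_\varphi^{(2-p)/p}$. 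Both $p_0=p'\in(2,\infty)$ and $p_1=p\in(1,2)$ lie in $(0,\infty)$, so Corollary \ref{interpolationoperator} applies.

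Next I apply Corollary \ref{interpolationoperator} with these data. Since $1<p\le b\le p'$ we have $\tfrac1{p'}\le\tfrac1b\le\tfrac1p$, so there is $\theta\in[0,1]$ with $\tfrac1b=\tfrac{1-\theta}{p'}+\tfrac{\theta}{p}$; using $\tfrac1p-\tfrac1{p'}=\tfrac{2-p}{p}$ this gives $\theta=\tfrac{p}{2-p}\bigl(\tfrac1b-\tfrac1{p'}\bigr)$, with $\theta\in(0,1)$ for $p<b<p'$ (the endpoints reduce directly to Theorems \ref{PI} and \ref{haus}). The corollary then produces a bounded extension $A\colon L^p(\overline{M})\to L^b(\omega)$ with $\omega=\omega_0^{\,b(1-\theta)/p'}\,\omega_1^{\,b\theta/p}$, and the real interpolation functor underlying it furnishes the norm bound $\lesssim C_p^{1-\theta}\bigl(M_\varphi^{(2-p)/p}\bigr)^{\theta}$. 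Since $\tfrac{(2-p)\theta}{p}=\tfrac1b-\tfrac1{p'}$, this is exactly the claimed factor $M_\varphi^{1/b-1/p'}$.

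The only remaining work, and the main obstacle, is the purely computational check that $\omega$ is the weight in \eqref{Vish5.9}. I would compute the three exponents separately. The powers of $\varphi$ and of $\|u_\xi\|_{L^\infty}$ both originate in $\omega_1$ and equal $(2-p)\tfrac{b\theta}{p}=b\bigl(\tfrac1b-\tfrac1{p'}\bigr)=1-\tfrac{b}{p'}$, matching the factors $\varphi^{1-b/p'}=(\varphi^{1/b-1/p'})^b$ and $\|u_\xi\|^{1-b/p'}$. The power of $\|v_\xi\|_{L^\infty}$ originates in $\omega_0$ and equals $(2-p')\tfrac{b(1-\theta)}{p'}$; this is the least transparent because $2-p'=-\tfrac{2-p}{p-1}$ is negative. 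Using $\tfrac1{p'}=\tfrac{p-1}{p}$ and $b\tfrac{1-\theta}{p'}=1-\tfrac{b\theta}{p}$ together with the identity $\tfrac{b\theta}{p}=\tfrac{1-b/p'}{2-p}$, it simplifies to $-\tfrac{2-p}{p-1}+\tfrac1{p-1}-\tfrac{b}{p}=1-\tfrac{b}{p}$, matching $\|v_\xi\|^{1-b/p}$. Assembling the three exponents gives precisely \eqref{Vish5.9}, so the entire difficulty is in correctly tracking these fractional powers through the interpolation, with the negative exponent $2-p'$ the one requiring the most care.
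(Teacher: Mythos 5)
Your proposal is correct and follows essentially the same route as the paper: real interpolation (Theorem \ref{interpolation} via Corollary \ref{interpolationoperator}) between the $L$-Paley inequality (Theorem \ref{PI}) and the Hausdorff--Young inequality (Theorem \ref{haus}), with the same interpolation parameter up to the relabeling $\theta\leftrightarrow 1-\theta$. The only cosmetic difference is that you interpolate the unnormalized transform $f\mapsto\mathcal{F}_Lf$ while the paper uses the normalized operator $\xi\mapsto\mathcal{F}_Lf(\xi)/\|u_\xi\|_{L^\infty(\overline{M})}$; your exponent bookkeeping is accurate (in fact, for the paper's normalized operator the interpolated weight should carry $\|u_{\xi}\|_{L^\infty(\overline{M})}^{1+\frac{b}{p}}$ rather than the printed $\|u_{\xi}\|_{L^\infty(\overline{M})}^{1-\frac{b}{p}}$, a typo in the paper's proof that does not affect the stated inequality, and which your direct computation avoids).
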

\begin{proof}
From Theorem \ref{PI}, the operator  defined by 
$$Af(\xi):= \left\{ \frac{\mathcal{F}_L(\xi)}{\|u_{\xi}\|_{L^\infty(\overline{M})} } \right\}_{\xi \in \mathcal{I}},$$ 
is bounded from $L^p(\overline{M})$ to $L^{p}(\mathcal{I},\omega_0),$ where $\omega_{0}=\|u_{\xi}\|^2_{L^\infty(\overline{M})}  \varphi(\xi)^{2-p}.$ From Theorem \ref{haus}, we deduce that $A:L^p(\overline{M}) \rightarrow L^{p'}(\mathcal{I}, \omega_1)$ with   $\omega_1(\xi)= \|u_{\xi}\|^{p'}_{L^\infty(\overline{M})} \|v_{\xi}\|^{2-p'}_{L^\infty(\overline{M})},$  admits a bounded extension. By using the real interpolation we will prove that $A:L^p(\overline{M}) \rightarrow L^{b}(\mathcal{I}, \omega),$ $p\leq b\leq p',$ is bounded,
where the space $L^p(\mathcal{I}, \omega)$ is defined by the norm 
$$\|\sigma\|_{L^p(\mathcal{I}, \omega)}:= \left( \sum_{\xi \in \mathcal{I}} |\sigma(\xi)|^p w(\xi)  \right)^{\frac{1}{p}}$$
 and $\omega(\xi)$ is positive sequence over $\mathcal{I}$ to be determined. To compute $\omega,$ we can use Corollary \ref{interpolationoperator}, by fixing $\theta\in (0,1)$ such that $\frac{1}{b}=\frac{1-\theta}{p}+\frac{\theta}{p'}$. In this case $\theta=\frac{p-b}{b(p-2)},$ and 
 \begin{equation}
     \omega= \omega_0^{\frac{p(1-\theta)}{p_0}} \omega_1^{\frac{p\theta}{p_1}}= \varphi(\xi)^{1-\frac{b}{p'}} \|u_{\xi}\|_{L^\infty(\overline{M})}^{(1-\frac{b}{p})}  \|v_{\xi}\|_{L^\infty(\overline{M})}^{(1-\frac{b}{p})}.     
 \end{equation}
 Thus we finish the proof.
 \end{proof}
 Analogously, by interpolating the Hausdorff-Young inequality for $L^*$-Fourier transform and $L^*$-Paley type inequality (Theorem \ref{PIL^*}) we obtain the following $L^*$-version of Hausdorff-Young-Paley inequality. 
 \begin{theorem}[$L^*$-Hausdorff-Young-Paley inequality] \label{HYPL^*} Let $1<p\leq 2,$ and let   $1<p \leq b \leq p' \leq \infty,$ where $p'= \frac{p}{p-1}$ and  $$\sup_{\xi \in \mathcal{I}} \left( \frac{\|u_\xi\|_{L^\infty( \overline{M})}}{\|v_\xi\|_{L^\infty(\overline{M})}} \right)<\infty.$$ If $\varphi(\xi)$ is a positive sequence in $\mathcal{I}$ such that 
 $$M_\varphi:= \sup_{t>0} t  \sum_{\overset{\xi \in \mathcal{I}}{t \leq \varphi(\xi) }}  \|v_\xi\|^2_{L^\infty(\overline{M})}    $$
is finite, then for every $f \in {L^p(\overline{M})}$ 
 we have
\begin{equation} \label{Vish5.9L^*}
    \left( \sum_{\xi \in \mathcal{I}}  \left( |\mathcal{F}_{L^*}f(\xi)| \varphi(\xi)^{\frac{1}{b}-\frac{1}{p'}} \right)^b \|v_{\xi}\|_{L^\infty(\overline{M})}^{1-\frac{b}{p'}}  \|u_{\xi}\|_{L^\infty(\overline{M})}^{1- \frac{b}{p}}     \right)^{\frac{1}{b}} \lesssim_p M_\varphi^{\frac{1}{b}-\frac{1}{p'}} \|f\|_{L^p(\overline{M})}.
\end{equation}
\end{theorem}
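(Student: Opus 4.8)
The plan is to run the proof of Theorem~\ref{HYP} verbatim, interchanging the roles of the two biorthogonal systems $\{u_\xi\}_{\xi\in\mathcal{I}}$ and $\{v_\xi\}_{\xi\in\mathcal{I}}$ and replacing each of its two ingredients by the corresponding $L^*$-statement. Concretely, I would introduce the sublinear operator
$$Af(\xi):=\left\{\frac{|\mathcal{F}_{L^*}(f)(\xi)|}{\|v_\xi\|_{L^\infty(\overline{M})}}\right\}_{\xi\in\mathcal{I}},$$
establish two weighted endpoint bounds for $A$, and then obtain \eqref{Vish5.9L^*} from a single application of the real-interpolation Corollary~\ref{interpolationoperator}.

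First I would record the endpoints. The $L^*$-Paley-type inequality (Theorem~\ref{PIL^*}) applies precisely under the two standing hypotheses, $\sup_\xi \|u_\xi\|_{L^\infty(\overline{M})}/\|v_\xi\|_{L^\infty(\overline{M})}<\infty$ and $M_\varphi=\sup_{t>0} t\sum_{\varphi(\xi)\geq t}\|v_\xi\|^2_{L^\infty(\overline{M})}<\infty$, and it shows that $A$ is bounded from $L^p(\overline{M})$ into $L^p(\mathcal{I},\omega_0)$ with $\omega_0(\xi)=\|v_\xi\|^2_{L^\infty(\overline{M})}\,\varphi(\xi)^{2-p}$ and norm $\lesssim M_\varphi^{(2-p)/p}$. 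The second assertion of the Hausdorff--Young inequality (Theorem~\ref{haus}), written for $\mathcal{F}_{L^*}$, shows that $A$ is bounded from $L^p(\overline{M})$ into $L^{p'}(\mathcal{I},\omega_1)$ with $\omega_1(\xi)=\|v_\xi\|^{p'}_{L^\infty(\overline{M})}\,\|u_\xi\|^{2-p'}_{L^\infty(\overline{M})}$; here the extra power $\|v_\xi\|^{p'}$ compensates the division by $\|v_\xi\|$ built into $A$ inside the $\ell^{p'}(L^*)$-norm.

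Next I would interpolate. Choosing $\theta\in(0,1)$ by $\frac{1}{b}=\frac{1-\theta}{p}+\frac{\theta}{p'}$, so that $p\leq b\leq p'$ corresponds to $\theta\in[0,1]$ and $\theta=\frac{p-b}{b(p-2)}$, Corollary~\ref{interpolationoperator} produces a bounded extension $A:L^p(\overline{M})\rightarrow L^b(\mathcal{I},\omega)$ with $\omega=\omega_0^{b(1-\theta)/p}\,\omega_1^{b\theta/p'}$ and norm $\lesssim M_\varphi^{(2-p)(1-\theta)/p}=M_\varphi^{\frac{1}{b}-\frac{1}{p'}}$. A direct computation of the product of powers, using the exponent identities forced by the value of $\theta$, collapses $\omega$ to
$$\omega(\xi)=\varphi(\xi)^{1-\frac{b}{p'}}\,\|v_\xi\|_{L^\infty(\overline{M})}^{1+\frac{b}{p}}\,\|u_\xi\|_{L^\infty(\overline{M})}^{1-\frac{b}{p}}.$$
Unwinding the definitions of $A$ and of $\|\cdot\|_{L^b(\mathcal{I},\omega)}$ carries a factor $\|v_\xi\|^{-b}_{L^\infty(\overline{M})}$ into the sum, turning $\|v_\xi\|^{1+b/p}$ into $\|v_\xi\|^{1-b/p'}$ and reproducing exactly the left-hand side of \eqref{Vish5.9L^*}.

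Since every nontrivial estimate has already been established in Theorem~\ref{PIL^*} and Theorem~\ref{haus}, I do not anticipate a genuine obstacle: the argument is the exact mirror image of Theorem~\ref{HYP}. The only point demanding care is the bookkeeping of exponents --- verifying that the interpolated weight $\omega$ collapses to the displayed product and that the $M_\varphi$-power simplifies to $\frac{1}{b}-\frac{1}{p'}$ --- together with the observation that it is the hypothesis $\sup_\xi\|u_\xi\|_{L^\infty(\overline{M})}/\|v_\xi\|_{L^\infty(\overline{M})}<\infty$ (rather than its reciprocal) that is consumed by the weak-$(1,1)$ step hidden inside Theorem~\ref{PIL^*}.
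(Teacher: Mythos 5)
Your proposal is correct and is exactly the argument the paper intends: the paper proves this theorem in one line, ``analogously, by interpolating the Hausdorff--Young inequality for the $L^*$-Fourier transform and the $L^*$-Paley inequality (Theorem \ref{PIL^*})'', which is precisely your mirror-image of the proof of Theorem \ref{HYP}, with the same operator $Af(\xi)=\widehat{f}_*(\xi)/\|v_\xi\|_{L^\infty(\overline{M})}$, the same two weighted endpoints, and the same choice $\theta=\frac{p-b}{b(p-2)}$ in Corollary \ref{interpolationoperator}. Your exponent bookkeeping is also right: the interpolated weight is $\omega=\varphi(\xi)^{1-\frac{b}{p'}}\|v_\xi\|_{L^\infty(\overline{M})}^{1+\frac{b}{p}}\|u_\xi\|_{L^\infty(\overline{M})}^{1-\frac{b}{p}}$, which after absorbing the factor $\|v_\xi\|_{L^\infty(\overline{M})}^{-b}$ from $A$ unwinds (via $1+\frac{b}{p}-b=1-\frac{b}{p'}$) to the stated left-hand side, and it in fact silently corrects a sign typo in the paper's displayed weight in the proof of Theorem \ref{HYP}, where $\|u_\xi\|^{1-\frac{b}{p}}$ appears in place of the correct $\|u_\xi\|^{1+\frac{b}{p}}$.
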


\subsection{$L^p$-$L^q$ boundedness} 
In this subection we will prove the $L^p$-$L^q$ boundedness of Fourier multipliers related of model operator $L$ on manifold $\overline{M}.$ This was proved for the torus in \cite{NT100} using a different method. 
\begin{theorem}\label{Th:LpLq-1}
Let $1<p \leq 2 \leq q <\infty$ and assume that
\begin{equation} \label{supcondi}
    \sup_{\xi \in \mathcal{I}} \left( \frac{\|v_\xi\|_{L^\infty( \overline{M})}}{\|u_\xi\|_{L^\infty(\overline{M})}} \right)<\infty\quad \text{and} \quad \sup_{\xi \in \mathcal{I}} \left( \frac{\|u_\xi\|_{L^\infty( \overline{M})}}{\|v_\xi\|_{L^\infty(\overline{M})}} \right)<\infty.
\end{equation} Suppose that $A:C^\infty_{L}(\overline{M})\rightarrow C^\infty_{L}(\overline{M})$ is a $L$-Fourier multiplier with $L$-symbol $\sigma_{A, L}$ on $\overline{M},$ that is, $A$ satisfies 
$$\mathcal{F}_L({Af})(\xi)= \sigma_{A, L}(\xi) \mathcal{F}_L{f}(\xi),\,\,\,\,\textnormal{ for all }\xi \in \mathcal{I},$$
 where $\sigma_{A, L} :\mathcal{I} \rightarrow \mathbb{C}$ is a function. Then we have
  \begin{align*}
     \|A\|_{  \mathscr{B}( L^p(\overline{M}) , L^q(\overline{M}))} &\lesssim   \sup_{s>0} s\left(  \ \sum_{\overset{\xi \in \mathcal{I}}{|\sigma_{A, L}(\xi)| \geq s}} \max \{\|u_{\xi}\|^2_{L^\infty(\overline{M})}, \|v_{\xi}\|^2_{L^\infty(\overline{M})} \}  \right)^{\frac{1}{p}-\frac{1}{q}}.
 \end{align*} 
 
 %If $\sigma_{A, L}$  satisfies   
%$$  \sup_{s>0} s\left(  \ \sum_{\overset{\xi \in \mathcal{I}}{|\sigma_{A, L}(\xi)| >s}} \max \{\|u_{\xi}\|^2_{L^\infty(\overline{M})}, \|v_{\xi}\|^2_{L^\infty(\overline{M})} \}  \right)^{\frac{1}{p}-\frac{1}{q}}<\infty,$$ 
%and 
%$$  \sup_{s>0} s\left(  \ \sum_{\overset{\xi \in \mathcal{I}}{|\sigma_{A, L}(\xi)| \|v_{\xi}\|^{\frac{1}{q'}-\frac{1}{p}}_{L^\infty(\overline{M})} \|u_{\xi}\|^{\frac{1}{p}-\frac{1}{q'}}_{L^\infty(\overline{M})}>s}} \|v_{\xi}\|^2_{L^\infty(\overline{M})}  \right)^{\frac{1}{p}-\frac{1}{q}}<\infty.$$ 
%then $A$ admits a bounded linear extension from $L^p(\overline{M})$ into $L^q(\overline{M}).$

\end{theorem}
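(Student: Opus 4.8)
The plan is to derive the estimate from the Hausdorff--Young--Paley inequalities (Theorems \ref{HYP} and \ref{HYPL^*}), a dual form of the Hausdorff--Young inequality (Theorem \ref{haus}), and a duality argument for the borderline exponents. Throughout put $\frac{1}{\beta}=\frac1p-\frac1q$ and
$$
K:=\sup_{s>0}s\left(\sum_{\overset{\xi\in\mathcal I}{|\sigma_{A,L}(\xi)|\ge s}}\max\{\|u_\xi\|_{L^\infty(\overline M)}^2,\|v_\xi\|_{L^\infty(\overline M)}^2\}\right)^{\frac1p-\frac1q}.
$$
Since both suprema in \eqref{supcondi} are finite, $\|u_\xi\|_{L^\infty(\overline M)}\asymp\|v_\xi\|_{L^\infty(\overline M)}$ uniformly in $\xi$; hence the weights $\|u_\xi\|^2$, $\|v_\xi\|^2$ and $\max\{\|u_\xi\|^2,\|v_\xi\|^2\}$ are mutually comparable, and I use this freely below.

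First I would reduce the $L^q$-norm of $Af$ to a weighted sequence norm of $\sigma_{A,L}\widehat f$. Writing $\|Af\|_{L^q(\overline M)}=\sup\{|(Af,h)_{L^2}|:\|h\|_{L^{q'}(\overline M)}=1\}$ and expanding the inner product through the Parseval identity \eqref{Hilb} as $(Af,h)_{L^2}=\sum_{\xi\in\mathcal I}\widehat{Af}(\xi)\overline{\widehat h_*(\xi)}$, an application of H\"older's inequality with the conjugate pair $(q',q)$ followed by the $L^*$-Hausdorff--Young inequality of Theorem \ref{haus} (legitimate because $q'\le2$) yields
$$
\|Af\|_{L^q(\overline M)}\lesssim\|\sigma_{A,L}\widehat f\|_{l^{q'}(L)}=\left(\sum_{\xi\in\mathcal I}|\sigma_{A,L}(\xi)|^{q'}|\widehat f(\xi)|^{q'}\|u_\xi\|_{L^\infty(\overline M)}^{2-q'}\right)^{1/q'}.
$$
Here the weights split correctly because $-(2-q')\tfrac{q}{q'}=2-q$. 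Thus it suffices to bound the right-hand side by $K\|f\|_{L^p(\overline M)}$.

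When $q\le p'$ (equivalently $p\le q'\le p'$) this is a direct application of Theorem \ref{HYP} with exponent $b=q'$ and $\varphi:=|\sigma_{A,L}|^{\beta}$. Indeed, using $\|u_\xi\|\asymp\|v_\xi\|$ the left-hand side of \eqref{Vish5.9} becomes comparable to $\|\sigma_{A,L}\widehat f\|_{l^{q'}(L)}$ once one checks $\varphi^{1-q'/p'}=|\sigma_{A,L}|^{q'}$, which holds by the choice of $\beta$; moreover, substituting $s=t^{1/\beta}$ gives $M_\varphi=\sup_{t>0}t\sum_{|\sigma_{A,L}|\ge t^{1/\beta}}\|u_\xi\|^2\asymp K^{\beta}$, and since $\tfrac1{q'}-\tfrac1{p'}=\tfrac1p-\tfrac1q=\tfrac1\beta$ the prefactor satisfies $M_\varphi^{1/q'-1/p'}\asymp K$. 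Combining with the reduction above gives $\|Af\|_{L^q}\lesssim K\|f\|_{L^p}$, which is exactly the claimed bound.

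The remaining range $q>p'$ is the crux, and it is where a single use of Theorem \ref{HYP} breaks down: the natural exponent $b=q'$ then satisfies $b<p$ and violates the hypothesis $p\le b\le p'$ of the Hausdorff--Young--Paley inequality. I would circumvent this by duality. By the Parseval identity the $L^2$-adjoint $A^*$ is the $L^*$-Fourier multiplier with symbol $\overline{\sigma_{A,L}}$, namely $A^*g=\sum_{\xi\in\mathcal I}\overline{\sigma_{A,L}(\xi)}\,\widehat g_*(\xi)v_\xi$, and $\|A\|_{\mathscr B(L^p,L^q)}=\|A^*\|_{\mathscr B(L^{q'},L^{p'})}$. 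The operator $A^*$ maps $L^{q'}\to L^{p'}$ with $q'\le2\le p'$, a problem of the same type whose associated admissibility constraint $p'\le q$ now holds; running the previous two steps with the $L^*$-Fourier transform, the $L^*$-Hausdorff--Young inequality, and the $L^*$-Hausdorff--Young--Paley inequality of Theorem \ref{HYPL^*} (with exponent $b=p$) produces $\|A^*\|_{\mathscr B(L^{q'},L^{p'})}\lesssim K$, the constant being unchanged because $|\overline{\sigma_{A,L}}|=|\sigma_{A,L}|$ and the roles of $\|u_\xi\|$ and $\|v_\xi\|$, interchanged on passing to $L^*$, are comparable by \eqref{supcondi}. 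The principal difficulty throughout is precisely this matching of exponents against the admissible range $[p,p']$ in the Paley-type interpolation, which forces the split into the two cases $q\le p'$ and $q\ge p'$.
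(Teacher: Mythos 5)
Your proposal is correct and follows essentially the same route as the paper's own proof: reduce $\|Af\|_{L^q}$ to $\|\sigma_{A,L}\widehat f\|_{l^{q'}(L)}$ via the (dual) Hausdorff--Young inequality, apply Theorem \ref{HYP} with $b=q'$ and $\varphi\asymp|\sigma_{A,L}|^{\beta}$ when $p\le q'$, and handle $q'\le p$ by passing to the adjoint $L^*$-multiplier $A^*$ with Theorem \ref{HYPL^*}. The only differences are cosmetic: you derive the dual Hausdorff--Young step explicitly via Parseval and H\"older rather than citing it, and you take the cleaner weight $\varphi=|\sigma_{A,L}|^{\beta}$ where the paper carries a factor $\|u_\xi\|^{1/q'-1/p}\|v_\xi\|^{1/p-1/q'}\asymp1$ before discarding it by \eqref{supcondi}.
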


Before starting the proof we would like to notice here that for $p \leq q'$ we only need the first inequality in \eqref{supcondi} above.  
\begin{proof} 
 Let us first assume that $p \leq q',$ where $\frac{1}{q}+\frac{1}{q'}=1.$ Since $q' \leq 2,$ the Hausdorff-Young inequality gives that 
 \begin{align} \label{vishconti}
     \|Af\|_{L^q(\overline{M})} & \lesssim  \|\mathcal{F}_L(Af)\|_{\ell^{q'}(L)}  =\|\sigma_{A, L} \mathcal{F}_L(f)\|_{\ell^{q'}(L)} \nonumber\\&= \left( \sum_{\xi \in \mathcal{I}} |\sigma_{A, L}(\xi) |^{q'} |\mathcal{F}_L(f)(\xi)|^{q'} \|u_{\xi}\|^{2-q'}_{L^\infty(\overline{M})} \right)^{\frac{1}{q'}}.
 \end{align}
 Now, we are in a position to apply Theorem \ref{HYP}. Set $\frac{1}{p}-\frac{1}{q}=\frac{1}{r}.$ By applying  Theorem \ref{HYP} in \eqref{vishconti} by taking $\varphi(\xi):= \left(|\sigma_{A, L}(\xi)| \|u_{\xi}\|^{\frac{1}{q'}-\frac{1}{p}}_{L^\infty(\overline{M})} \|v_{\xi}\|^{\frac{1}{p}-\frac{1}{q'}}_{L^\infty(\overline{M})} \right)^r$ with $b=q',$ we get 
 \begin{align*}
     \|Af\|_{L^q(\overline{M})} & \lesssim \|\sigma_{A, L} \mathcal{F}_L(f)\|_{\ell^{q'}(L)} \nonumber\\&= \left( \sum_{\xi \in \mathcal{I}} |\sigma_{A, L}(\xi) |^{q'} |\mathcal{F}_L(f)(\xi)|^{q'} \|u_{\xi}\|^{2-q'}_{L^\infty(\overline{M})}  \right)^{\frac{1}{q'}} \\&= \left( \sum_{\xi \in \mathcal{I}}  \left( |\mathcal{F}_{L}f(\xi)| \varphi(\xi)^{\frac{1}{b}-\frac{1}{p'}} \right)^{q'} \|u_{\xi}\|_{L^\infty(\overline{M})}^{1-\frac{q'}{p'}}  \|v_{\xi}\|_{L^\infty(\overline{M})}^{1- \frac{q'}{p}}     \right)^{\frac{1}{q'}}  \\&  \overset{\eqref{Vish5.9}}{ \lesssim} \left(  \sup_{s>0} s \sum_{\overset{\xi \in \mathcal{I}}{\varphi(\xi)\geq s}} \|u_{\xi}\|^2_{L^\infty(\overline{M})}   \right)^{\frac{1}{r}} \|f\|_{L^p(\overline{M})}\\& = \left(  \sup_{s>0} s \sum_{\overset{\xi \in \mathcal{I}}{\left(|\sigma_{A, L}(\xi)| \|u_{\xi}\|^{\frac{1}{q'}-\frac{1}{p}}_{L^\infty(\overline{M})} \|v_{\xi}\|^{\frac{1}{p}-\frac{1}{q'}}_{L^\infty(\overline{M})} \right)^r \geq s}} \|u_{\xi}\|^2_{L^\infty(\overline{M})}   \right)^{\frac{1}{r}} \|f\|_{L^p(\overline{M})},
 \end{align*}
  for all $f \in L^p(\overline{M}),$ in view of $\frac{1}{p}-\frac{1}{q}=\frac{1}{q'}-\frac{1}{p'}=\frac{1}{r.}$ Thus, we obtain 
  $$\|A\|_{  \mathscr{B}( L^p(\overline{M}) , L^q(\overline{M}))} \lesssim \left(  \sup_{s>0} s \sum_{\overset{\xi \in \mathcal{I}}{\left(|\sigma_{A, L}(\xi)| \|u_{\xi}\|^{\frac{1}{q'}-\frac{1}{p}}_{L^\infty(\overline{M})} \|v_{\xi}\|^{\frac{1}{p}-\frac{1}{q'}}_{L^\infty(\overline{M})} \right)^r \geq s}} \|u_{\xi}\|^2_{L^\infty(\overline{M})}  \right)^{\frac{1}{r}}.$$
  Now, using the condition \eqref{supcondi} we deduce that $\|u_\xi\|_{L^\infty(\overline{M})} \|v_\xi\|_{L^\infty(\overline{M})}^{-1} \asymp 1$ and so $$\|u_{\xi}\|^{\frac{1}{q'}-\frac{1}{p}}_{L^\infty(\overline{M})} \|v_{\xi}\|^{\frac{1}{p}-\frac{1}{q'}}_{L^\infty(\overline{M})} \asymp 1.$$
  Therefore, we get 
 \begin{align*}
     \|A\|_{  \mathscr{B}( L^p(\overline{M}) , L^q(\overline{M}))} &\lesssim  \left(  \sup_{s>0} s \sum_{\overset{\xi \in \mathcal{I}}{|\sigma_{A, L}(\xi)|  >s^{\frac{1}{r}}}} \|u_{\xi}\|^2_{L^\infty(\overline{M})}  \right)^{\frac{1}{r}} = \left(  \sup_{s>0} s^r \sum_{\overset{\xi \in \mathcal{I}}{|\sigma_{A, L}(\xi)| \geq s}} \|u_{\xi}\|^2_{L^\infty(\overline{M})}  \right)^{\frac{1}{r}}\\&=\sup_{s>0} s\left(   \sum_{\overset{\xi \in \mathcal{I}}{|\sigma_{A, L}(\xi)| \geq s}} \|u_{\xi}\|^2_{L^\infty(\overline{M})}  \right)^{\frac{1}{r}}\\&\lesssim \sup_{s>0} s\left(  \ \sum_{\overset{\xi \in \mathcal{I}}{|\sigma_{A, L}(\xi)| \geq s}} \max \{\|u_{\xi}\|^2_{L^\infty(\overline{M})}, \|v_{\xi}\|^2_{L^\infty(\overline{M})} \}  \right)^{\frac{1}{r}}<\infty.
 \end{align*} 
 
Now we consider the case $q' \leq p$ so that   $p' \leq q=(q')'$. Using the duality of $L^p$-spaces we have $\|A\|_{\mathscr{B}(L^p(\overline{M}), L^q(\overline{M}))}= \|A^*\|_{\mathscr{B}(L^{q'}(\overline{M}), L^{p'}(\overline{M}))}.$ The $L^*$-symbol of $\sigma_{A^*, L^*}(\xi)$ of the adjoint operator $A^*,$ which is an $L^*$-Fourier multiplier, is equal to $\overline{\sigma_{A, L}(\xi)}$ and obviously we have $|\sigma_{A, L}(\xi)|= |\sigma_{A^*, L^*}(\xi)|$ (see Proposition 3.6 in \cite{DRT17}). Now, the idea is to proceed as the case $p \leq q'$ but this time for $L^*$-Fourier multiplier $A^*$  we get, as an application of Hausdorff-Young inequality for $L^*$-Fourier transform and Theorem \ref{HYPL^*}, that  
\begin{align*}
   \|A\|_{\mathscr{B}(L^p(\overline{M}), L^q(\overline{M}))}= \|A^*\|_{\mathscr{B}(L^{q'}(\overline{M}), L^{p'}(\overline{M}))}  \lesssim \sup_{s>0} s\left(  \ \sum_{\overset{\xi \in \mathcal{I}}{|\sigma_{A, L}(\xi)| \geq  s}} \|v_{\xi}\|^2_{L^\infty(\overline{M})}  \right)^{\frac{1}{q'}-\frac{1}{p'}}.
\end{align*} Therefore, in the view of $\frac{1}{p}-\frac{1}{q}=\frac{1}{r}=\frac{1}{q'}-\frac{1}{p'}$ we have 
$$\|A\|_{\mathscr{B}(L^p(\overline{M}), L^q(\overline{M}))} \lesssim \sup_{s>0} s\left(  \ \sum_{\overset{\xi \in \mathcal{I}}{|\sigma_{A, L}(\xi)| \geq s}} \max \{\|u_{\xi}\|^2_{L^\infty(\overline{M})}, \|v_{\xi}\|^2_{L^\infty(\overline{M})} \}  \right)^{\frac{1}{p}-\frac{1}{q}}, $$ proving the Theorem \ref{Th:LpLq-1}.

\end{proof}
In case when $\overline{M}$ is  a compact manifold  the condition $1<p \leq 2 \leq q<\infty$ can be replaced by  $1<p , q<\infty.$ %For this, we need the following lemma.

\begin{corollary}\label{Th:LpLq-1compact}
Let $1<p , q <\infty$ and assume that
\begin{equation*} \label{supcondi1}
    \sup_{\xi \in \mathcal{I}} \left( \frac{\|v_\xi\|_{L^\infty( \overline{M})}}{\|u_\xi\|_{L^\infty(\overline{M})}} \right)<\infty\quad \text{and} \quad \sup_{\xi \in \mathcal{I}} \left( \frac{\|u_\xi\|_{L^\infty( \overline{M})}}{\|v_\xi\|_{L^\infty(\overline{M})}} \right)<\infty.
\end{equation*}  Suppose that $A:C^\infty_{L}(\overline{M})\rightarrow C^\infty_{L}(\overline{M})$ is a $L$-Fourier multiplier with $L$-symbol $\sigma_{A, L}$ on a compact manifold $\overline{M}.$  If $1<p,q\leq 2,$ then
\begin{align*}
 \|A\|_{\mathscr{B}(L^p(\overline{M}) , L^{q}(\overline{M}))}\lesssim \sup_{s>0} s\left(  \ \sum_{\overset{\xi \in \mathcal{I}}{|\sigma_{A, L}(\xi)| \geq s}} \max \{\|u_{\xi}\|^2_{L^\infty(\overline{M})}, \|v_{\xi}\|^2_{L^\infty(\overline{M})} \}  \right)^{\frac{1}{p}-\frac{1}{2}},  
\end{align*}
while for $2\leq p,q<\infty$ we have
\begin{align*}
  \|A\|_{\mathscr{B}(L^p(\overline{M}) , L^{q}(\overline{M}))}\lesssim  \sup_{s>0} s\left(  \ \sum_{\overset{\xi \in \mathcal{I}}{|\sigma_{A, L}(\xi)| \geq s}} \max \{\|u_{\xi}\|^2_{L^\infty(\overline{M})}, \|v_{\xi}\|^2_{L^\infty(\overline{M})} \}  \right)^{\frac{1}{q'}-\frac{1}{2}}.
\end{align*}
\end{corollary}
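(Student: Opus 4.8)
The plan is to reduce the full range $1<p,q<\infty$ to the already-established range of Theorem \ref{Th:LpLq-1} by exploiting that a compact manifold $\overline{M}$ carries a finite measure, so that H\"older's inequality yields the continuous embeddings $L^{s}(\overline{M})\hookrightarrow L^{r}(\overline{M})$ whenever $1\le r\le s$, with operator norm bounded by a power of the total volume. Since the constants produced this way are absorbed by the symbol $\lesssim$, it suffices to combine these embeddings with the two endpoint cases $q=2$ and $p=2$ of Theorem \ref{Th:LpLq-1}; note that both conditions in \eqref{supcondi} are assumed here, so that theorem applies without any extra restriction.

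First I would treat the case $1<p,q\le 2$. Here I apply Theorem \ref{Th:LpLq-1} with the admissible pair $(p,2)$ (legitimate since $1<p\le 2\le 2$), obtaining that $A:L^p(\overline{M})\to L^2(\overline{M})$ is bounded with
\[
\|A\|_{\mathscr{B}(L^p(\overline{M}),L^2(\overline{M}))}\lesssim \sup_{s>0}s\Big(\sum_{\overset{\xi\in\mathcal{I}}{|\sigma_{A,L}(\xi)|\ge s}}\max\{\|u_\xi\|^2_{L^\infty(\overline{M})},\|v_\xi\|^2_{L^\infty(\overline{M})}\}\Big)^{\frac1p-\frac12}.
\]
Because $q\le 2$, the embedding $L^2(\overline{M})\hookrightarrow L^q(\overline{M})$ gives $\|Af\|_{L^q(\overline{M})}\lesssim\|Af\|_{L^2(\overline{M})}$, and composing these two estimates produces exactly the first asserted bound, with exponent $\tfrac1p-\tfrac12$.

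The case $2\le p,q<\infty$ is dual: I would apply Theorem \ref{Th:LpLq-1} with the pair $(2,q)$ (legitimate since $1<2\le 2\le q<\infty$) to get $A:L^2(\overline{M})\to L^q(\overline{M})$ bounded with exponent $\tfrac12-\tfrac1q$, and then precompose with the embedding $L^p(\overline{M})\hookrightarrow L^2(\overline{M})$, valid because $p\ge 2$, so that $\|f\|_{L^2(\overline{M})}\lesssim\|f\|_{L^p(\overline{M})}$. The only point requiring attention is to rewrite the exponent in the form stated in the corollary, which is the elementary identity $\tfrac12-\tfrac1q=\tfrac{1}{q'}-\tfrac12$ coming from $\tfrac{1}{q'}=1-\tfrac1q$.

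I do not expect any genuine obstacle beyond these bookkeeping points: one must only check that both endpoint applications of Theorem \ref{Th:LpLq-1} remain inside its hypotheses (which they do, since $q=2$ and $p=2$ are the boundary values allowed by $1<p\le 2\le q<\infty$), that the weight $\max\{\|u_\xi\|^2_{L^\infty(\overline{M})},\|v_\xi\|^2_{L^\infty(\overline{M})}\}$ is identical in both endpoint bounds, and that the two exponent identities above match the claimed $\tfrac1p-\tfrac12$ and $\tfrac{1}{q'}-\tfrac12$. The compactness of $\overline{M}$ is used only to supply the $L^s\hookrightarrow L^r$ embeddings, which is precisely why the statement must be restricted to compact $\overline{M}$.
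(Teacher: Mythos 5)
Your proof is correct, and the first case ($1<p,q\le 2$) coincides exactly with the paper's: both apply Theorem \ref{Th:LpLq-1} at the admissible pair $(p,2)$ and use the finite measure of the compact manifold to pass from $L^2(\overline{M})$ to $L^q(\overline{M})$. For the second case ($2\le p,q<\infty$) you take a genuinely different route: you apply Theorem \ref{Th:LpLq-1} directly at the pair $(2,q)$ and precompose with the embedding $L^p(\overline{M})\hookrightarrow L^2(\overline{M})$, using the identity $\tfrac12-\tfrac1q=\tfrac{1}{q'}-\tfrac12$, whereas the paper dualizes, writing $\|A\|_{\mathscr{B}(L^p(\overline{M}),L^q(\overline{M}))}=\|A^*\|_{\mathscr{B}(L^{q'}(\overline{M}),L^{p'}(\overline{M}))}$ with $1<p',q'\le 2$ and then invoking its own first case for the adjoint. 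Your variant is slightly more self-contained at the level of the corollary: it avoids appealing to the adjoint $A^*$ being an $L^*$-Fourier multiplier with $|\sigma_{A^*,L^*}(\xi)|=|\sigma_{A,L}(\xi)|$ (Proposition 3.6 of \cite{DRT17}), and it makes transparent why the symmetric weight $\max\{\|u_\xi\|^2_{L^\infty(\overline{M})},\|v_\xi\|^2_{L^\infty(\overline{M})}\}$ appears identically in both endpoint bounds. The paper's duality route, in turn, parallels the structure already used inside the proof of Theorem \ref{Th:LpLq-1} for the regime $q'\le p$ and so keeps the two statements stylistically aligned; substantively the duality is anyway buried inside Theorem \ref{Th:LpLq-1} when you invoke it at $(2,q)$, since there $q'\le 2=p$. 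Both arguments need the two sup-conditions in \eqref{supcondi}, so neither buys a weaker hypothesis; your bookkeeping of the exponents and of the admissibility of the endpoint pairs $(p,2)$ and $(2,q)$ is accurate.
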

\begin{proof}   Let us assume that   $1<p,q \leq 2.$ Using the compactness of $\overline{M},$ we have $\|A\|_{\mathscr{B}(L^p(\overline{M}) , L^{q}(\overline{M}))}\lesssim \|A\|_{\mathscr{B}(L^p(\overline{M}) , L^{2}(\overline{M}))}$ and therefore, Theorem \ref{Th:LpLq-1} gives
\begin{align*}
  &\|A\|_{\mathscr{B}(L^p(\overline{M}) , L^{q}(\overline{M}))}\lesssim \|A\|_{\mathscr{B}(L^p(\overline{M}) , L^{2}(\overline{M}))} \\
  &\lesssim  \sup_{s>0} s\left(  \ \sum_{\overset{\xi \in \mathcal{I}}{|\sigma_{A, L}(\xi)| \geq s}} \max \{\|u_{\xi}\|^2_{L^\infty(\overline{M})}, \|v_{\xi}\|^2_{L^\infty(\overline{M})} \}  \right)^{\frac{1}{p}-\frac{1}{2}}.
\end{align*} Now, let us assume that $2\leq p,q<\infty.$ Then $1<p',q'\leq 2,$ and using the first part of the proof we deduce
 \begin{align*}
  &\|A\|_{\mathscr{B}(L^p(\overline{M}) , L^{q}(\overline{M}))}= \|A^*\|_{\mathscr{B}(L^{q'}(\overline{M}) , L^{p'}(\overline{M}))} \\
  &\lesssim  \sup_{s>0} s\left(  \ \sum_{\overset{\xi \in \mathcal{I}}{|\sigma_{A, L}(\xi)| \geq s}} \max \{\|u_{\xi}\|^2_{L^\infty(\overline{M})}, \|v_{\xi}\|^2_{L^\infty(\overline{M})} \}  \right)^{\frac{1}{q'}-\frac{1}{2}}.
\end{align*}Thus, we finish the proof.
\end{proof}

  The following theorem  presents our main result of this section on $L^p$-$L^q$ boundedness of pseudo-differential operators.  
  
 \begin{theorem}\label{Th:LpLq-2}
Let $1<p \leq 2 \leq q <\infty$ and assume that  $$\sup_{\xi \in \mathcal{I}} \left( \frac{\|v_\xi\|_{L^\infty( \overline{M})}}{\|u_\xi\|_{L^\infty(\overline{M})}} \right)<\infty\quad \text{and} \quad \sup_{\xi \in \mathcal{I}} \left( \frac{\|u_\xi\|_{L^\infty( \overline{M})}}{\|v_\xi\|_{L^\infty(\overline{M})}} \right)<\infty.$$ Suppose that $A:C^\infty_{L}(\overline{M})\rightarrow C^\infty_{L}(\overline{M})$ is a continuous linear operators with $L$-symbol  $\sigma_{A,L}:\overline{M}\times \mathcal{I}\rightarrow\mathbb{C},$ where $\overline{M}$ is a compact manifold, satisfying

\begin{equation}
\Vert \sigma_{A,L}\Vert_{(\beta)}:=\sup_{s>0, \,y\in \overline{M}} s \left( \sum_{\overset{\xi \in \mathcal{I}}{|\partial_{y}^\beta\sigma_{A,L}(y,\xi)| \geq s}} \max \{\|u_{\xi}\|^2_{L^\infty(\overline{M})}, \|v_{\xi}\|^2_{L^\infty(\overline{M})} \}   \right)^{\frac{1}{p}-\frac{1}{q}}<\infty,
\end{equation}
 for all $|\beta|\leq \left[\frac{\dim({M})}{q}\right]+1,$ where $\partial_y$ denotes the local partial derivative (see Section \ref{PRE}). If $\partial M\neq \emptyset,$ let us assume additionally that    $\textnormal{supp}(\sigma_{A,L})\subset \{(y,\xi)\in \overline{M}\times \mathcal{I}:y\in \overline{M}\setminus V\}$ where $V\subset \overline{M}$ is  an open neighbourhood of the boundary $\partial M.$  Then $A$ admits a bounded extension from $L^p(\overline{M})$ into $L^q(\overline{M}).$  
\end{theorem}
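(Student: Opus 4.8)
The plan is to reduce the boundedness of the genuine pseudo-differential operator $A$ to the Fourier-multiplier estimate already established in Theorem \ref{Th:LpLq-1}, by absorbing the $x$-dependence of the symbol through a Sobolev embedding on $\overline{M}$. To this end, for $f\in C^\infty_L(\overline{M})$ I would introduce the two-variable function
\begin{equation*}
G(x,y):=\sum_{\xi\in\mathcal{I}}u_\xi(x)\,\sigma_{A,L}(y,\xi)\,\widehat{f}(\xi),\qquad (x,y)\in\overline{M}\times\overline{M},
\end{equation*}
so that $Af(x)=G(x,x)$. The point is that for each fixed $y$ the map $f\mapsto G(\cdot,y)$ is an $L$-Fourier multiplier with symbol $\xi\mapsto\sigma_{A,L}(y,\xi)$, and more generally $f\mapsto \partial_y^\beta G(\cdot,y)$ is the $L$-Fourier multiplier $A_y^{(\beta)}$ with symbol $\xi\mapsto\partial_y^\beta\sigma_{A,L}(y,\xi)$, to which Theorem \ref{Th:LpLq-1} applies.

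Writing $\kappa:=[\dim(M)/q]+1$, I would use that $\kappa>\dim(M)/q$, so that the Sobolev embedding $W^{\kappa,q}\hookrightarrow L^\infty$ holds on the compact manifold $\overline{M}$; when $\partial M\neq\emptyset$ the support hypothesis $\textnormal{supp}(\sigma_{A,L})\subset\{y\in\overline{M}\setminus V\}$ guarantees that $G(x,\cdot)$ is supported in a compact subset of the interior $M$, so no boundary correction is needed. Applying this embedding in the $y$-variable together with $|Af(x)|=|G(x,x)|\le\sup_y|G(x,y)|$, integrating in $x$, and interchanging the order of integration by Tonelli's theorem, gives
\begin{align*}
\|Af\|_{L^q(\overline{M})}^q
&\le \int_{\overline{M}}\Big(\sup_{y\in\overline{M}}|G(x,y)|\Big)^q\,dx
\lesssim \sum_{|\beta|\le\kappa}\int_{\overline{M}}\int_{\overline{M}}|\partial_y^\beta G(x,y)|^q\,dx\,dy\\
&=\sum_{|\beta|\le\kappa}\int_{\overline{M}}\big\|A_y^{(\beta)}f\big\|_{L^q(\overline{M})}^q\,dy.
\end{align*}
For each fixed $y$ I would estimate the inner norm by Theorem \ref{Th:LpLq-1} applied to $A_y^{(\beta)}$, whose operator norm is controlled, using the two-sided hypothesis on $\|u_\xi\|_{L^\infty(\overline{M})}/\|v_\xi\|_{L^\infty(\overline{M})}$, by
\begin{equation*}
\sup_{s>0}s\Big(\sum_{|\partial_y^\beta\sigma_{A,L}(y,\xi)|\ge s}\max\{\|u_\xi\|_{L^\infty(\overline{M})}^2,\|v_\xi\|_{L^\infty(\overline{M})}^2\}\Big)^{\frac1p-\frac1q}\le\|\sigma_{A,L}\|_{(\beta)},
\end{equation*}
uniformly in $y\in\overline{M}$.

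Combining these, $\|A_y^{(\beta)}f\|_{L^q(\overline{M})}\lesssim\|\sigma_{A,L}\|_{(\beta)}\|f\|_{L^p(\overline{M})}$ uniformly in $y$, and since $\overline{M}$ is compact the outer $y$-integral only contributes the finite factor $|\overline{M}|$; hence
\begin{equation*}
\|Af\|_{L^q(\overline{M})}\lesssim\Big(\sum_{|\beta|\le\kappa}\|\sigma_{A,L}\|_{(\beta)}^q\Big)^{1/q}\|f\|_{L^p(\overline{M})}\lesssim\max_{|\beta|\le\kappa}\|\sigma_{A,L}\|_{(\beta)}\,\|f\|_{L^p(\overline{M})},
\end{equation*}
which is finite by hypothesis, and the theorem follows by a density argument. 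I expect the main technical obstacle to be the rigorous justification of the Sobolev embedding step on a manifold with boundary: one must ensure that the local derivatives $\partial_y^\beta$ (taken in coordinate charts as in Section \ref{PRE}) genuinely control $\sup_y|G(x,y)|$, which is precisely where the support condition away from $\partial M$ enters, allowing one to reduce, via a partition of unity, to the standard embedding $W^{\kappa,q}\hookrightarrow L^\infty$ on a closed manifold or on $\mathbb{R}^{\dim M}$.
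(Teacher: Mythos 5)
Your proposal is correct and follows essentially the same route as the paper's own proof: freezing the symbol's spatial variable (your $G(x,y)$ is exactly the paper's $A_yf(x)$), applying the Sobolev embedding of order $\left[\dim(M)/q\right]+1$ in the $y$-variable (with the support condition away from $\partial M$ playing the same role), interchanging the order of integration, and then invoking Theorem \ref{Th:LpLq-1} uniformly in $y$ via the two-sided comparability of $\|u_\xi\|_{L^\infty(\overline{M})}$ and $\|v_\xi\|_{L^\infty(\overline{M})}$. The only cosmetic difference is that you keep the outer $dy$-integral and bound it by $|\overline{M}|\sup_y$, whereas the paper passes to $\sup_y$ directly after Fubini; the estimates are identical.
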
 
\begin{proof}
Let us assume that $f\in C^\infty_0(\overline{M}). $ First, assume that $\partial M\neq \emptyset.$ For every $y\in \overline{M},$ we define 
\begin{equation}
    A_{y}f(x):=\sum_{\xi\in \mathcal{I}} u_\xi(x) \sigma_{A, L}(y,\xi)\widehat{f}(\xi),
\end{equation}
so that $Af(x)=A_xf(x).$ By hypothesis we have  $\textnormal{supp}(\sigma_{A,L})\subset \{(x,\xi)\in \overline{M}\times \mathcal{I}:x\in \overline{M}\setminus V\}$ so that  $\textnormal{supp}(Af)\subset \overline{M} \setminus V.$
%Observe that $\textnormal{supp}(Af)\subset \pi_{1} (\textnormal{supp}(\sigma_{A,L})),$ where $\pi_1$ denotes the projection in the first component $\pi_1(x,\xi):=x.$ 
Then 
\begin{align*}
    &\Vert Af\Vert_{L^q(\overline{M}))}^q= \int\limits_{\overline{M}}|Af(x)|^qdx= \int\limits_{\overline{M}\setminus V}|Af(x)|^qdx %\\
    %&\leq \int\limits_{\overline{M}\setminus V} \sup_{y \in \pi_{1}( \textnormal{supp}(\sigma_{A,L}))} |A_yf(x)|^q dx
    \\&\leq \int\limits_{\overline{M}\setminus V} \sup_{y \in\overline{M}\setminus V  } |A_yf(x)|^q dx.
\end{align*}
Now, the compactness of $\overline{M},$ and the local Sobolev embedding theorem on $M\setminus V\subset M=\textnormal{int}(\overline{M}),$ implies 
\begin{align*} \sup_{y \in {M}\setminus V} |A_y f(x)|^q &\leq C \sum_{|\beta| \leq \left[ \frac{\dim({M})}{q} \right]+1} \,\int\limits_{\overline{M}\setminus V} |\partial_y^{\beta} A_yf(x)|^q dy\\
&\lesssim \sum_{|\beta| \leq \left[ \frac{\dim({M})}{q} \right]+1} \,\int\limits_{\overline{M}} |\partial_y^{\beta} A_yf(x)|^q dy
\end{align*}
where for every $\beta\in\mathbb{N}_{0}^{\dim{{M}}},$  the operator $\partial_{y}^\beta A$ is defined by  
\begin{equation}
    (\partial_{y}^\beta A_{y})f(x):=\sum_{\xi\in \mathcal{I}} u_\xi(x) (\partial_{y}^\beta\sigma_{A, L})(y,\xi)\widehat{f}(\xi).
\end{equation}
Therefore, by using the change of the order of integration and Fubini Theorem we get 
\begin{align*}
      & \Vert Af\Vert_{L^q(\overline{M}))}^q \leq \int\limits_{\overline{M}} \sup_{y \in \overline{M}\setminus V} |A_yf(x)|^q dx  \leq C \int\limits_{\overline{M}}  \sum_{|\beta| \leq \left[ \frac{\dim({M})}{q} \right]+1} \,\int\limits_{\overline{M}} |\partial_y^{\beta} A_yf(x)|^q dy dx \\& \leq C \sum_{|\beta| \leq \left[ \frac{\dim({M})}{q} \right]+1} \sup_{y \in \overline{M}}   \,\int\limits_{\overline{M}} |\partial_y^{\beta} A_yf(x)|^q  dx  = C \sum_{|\beta| \leq \left[ \frac{\dim({M})}{q} \right]+1} \sup_{y \in \overline{M}}   \|\partial_y^{\beta} A_yf\|_{L^q(\overline{M})}^q \\ &\leq C \sum_{|\beta| \leq \left[ \frac{\dim({M})}{q} \right]+1} \sup_{y \in \overline{M}} \|f \mapsto \text{Op}(\partial_y^{\beta} \sigma_{A,L})f\|_{\mathscr{B}(L^p(\overline{M}), L^q(\overline{M}))}^q \|f\|_{L^q(\overline{M})}^q \\& \lesssim \left[ \sum_{|\beta| \leq \left[ \frac{\dim({M})}{q} \right]+1}  \sup_{s>0, \,y \in \overline{M}} s \left( \sum_{\overset{\xi \in \mathcal{I}}{ |\partial_{y}^\beta\sigma_{A,L}(y,\xi)|\geq s}} \max \{\|u_{\xi}\|^2_{L^\infty(\overline{M})}, \|v_{\xi}\|^2_{L^\infty(\overline{M})} \}   \right)^{\frac{1}{p}-\frac{1}{q}}  \right]^q \|f\|_{L^q(\overline{M})}^q,
\end{align*}

where the last inequality follows from Theorem \ref{Th:LpLq-1}. Hence, 
\begin{align*}
    & \|A\|_{\mathscr{B}(L^p(\overline{M}),  L^q(\overline{M}))} \\ &\lesssim \sup_{s>0, \,y\in \overline{M}} s \sum_{|\beta| \leq \left[ \frac{\dim({M})}{q} \right]+1}   \left( \sum_{\overset{\xi \in \mathcal{I}}{|\partial_{y}^\beta\sigma_{A,L}(y,\xi)| \geq s}} \max \{\|u_{\xi}\|^2_{L^\infty(\overline{M})}, \|v_{\xi}\|^2_{L^\infty(\overline{M})} \}   \right)^{\frac{1}{p}-\frac{1}{q}}   <\infty.
\end{align*} Now, if $\partial M=\emptyset,$ we can take $V=\emptyset$ above and the proof above works in this case. Thus, we finish the proof of Theorem \ref{Th:LpLq-2}.
\end{proof} 

The following corollary is an analogue of Corollary \ref{Th:LpLq-1compact} for pseudo-differential operators. The proof of this corollary follows similar to Corollary \ref{Th:LpLq-1compact} by using Theorem \ref{Th:LpLq-2}. 
%and Lemma \ref{Duvanlemma}.

\begin{corollary} Let $1<p , q <\infty$ and assume that
\begin{equation*} 
    \sup_{\xi \in \mathcal{I}} \left( \frac{\|v_\xi\|_{L^\infty( \overline{M})}}{\|u_\xi\|_{L^\infty(\overline{M})}} \right)<\infty\quad \text{and} \quad \sup_{\xi \in \mathcal{I}} \left( \frac{\|u_\xi\|_{L^\infty( \overline{M})}}{\|v_\xi\|_{L^\infty(\overline{M})}} \right)<\infty.
\end{equation*} Suppose that $A:C^\infty_{L}(\overline{M})\rightarrow C^\infty_{L}(\overline{M})$ is a continuous linear operators with $L$-symbol  $\sigma_{A,L}:\overline{M}\times \mathcal{I}\rightarrow\mathbb{C},$ where $\overline{M}$ is a compact manifold, satisfying 
\begin{itemize}
    \item for $1<p,q\leq 2,$
    \begin{equation*}
\sup_{s>0, \,y\in \overline{M}} s \left( \sum_{\overset{\xi \in \mathcal{I}}{|\partial_{y}^\beta\sigma_{A,L}(y,\xi)| \geq s}} \max \{\|u_{\xi}\|^2_{L^\infty(\overline{M})}, \|v_{\xi}\|^2_{L^\infty(\overline{M})} \}   \right)^{\frac{1}{p}-\frac{1}{2}}<\infty,
\end{equation*}
 for all $|\beta|\leq \left[\frac{\dim({M})}{q'}\right]+1,$ and
 \item for $2\leq p,q<\infty,$
 \begin{equation*}
\sup_{s>0, \,y\in \overline{M}} s \left( \sum_{\overset{\xi \in \mathcal{I}}{|\partial_{y}^\beta\sigma_{A,L}(y,\xi)| \geq s}} \max \{\|u_{\xi}\|^2_{L^\infty(\overline{M})}, \|v_{\xi}\|^2_{L^\infty(\overline{M})} \}   \right)^{\frac{1}{q'}-\frac{1}{2}}<\infty,
\end{equation*}
 for all $|\beta|\leq \left[\frac{\dim({M})}{p'}\right]+1.$
 \end{itemize}
If $\partial M\neq \emptyset,$ let us assume additionally that    $\textnormal{supp}(\sigma_{A,L})\subset \{(y,\xi)\in \overline{M}\times \mathcal{I}:y\in \overline{M}\setminus V\}$ where $V\subset \overline{M}$ is  an open neighbourhood of the boundary $\partial M.$ Then $A$ admits a bounded extension from $L^p(\overline{M})$ into $L^q(\overline{M}).$
\end{corollary}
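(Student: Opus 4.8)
The plan is to reduce both ranges to the already-established Theorem \ref{Th:LpLq-2}, which covers exactly $1<p\le 2\le q<\infty$, by pivoting through $L^2(\overline{M})$ and using that on the \emph{compact} manifold $\overline{M}$ one has the continuous inclusions $L^{r_2}(\overline{M})\hookrightarrow L^{r_1}(\overline{M})$ for $r_1\le r_2$, with $\|g\|_{L^{r_1}(\overline{M})}\le |\overline{M}|^{1/r_1-1/r_2}\|g\|_{L^{r_2}(\overline{M})}$. This mirrors the mechanism of Corollary \ref{Th:LpLq-1compact}, but since here $A$ carries an $x$-dependent symbol I would keep $A$ itself throughout, rather than pass to its adjoint, so as to avoid the symbolic calculus of $A^\ast$. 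The boundary support hypothesis on $\sigma_{A,L}$ (when $\partial M\neq\emptyset$) is inherited verbatim, being the very assumption under which Theorem \ref{Th:LpLq-2} is proved.

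First I would treat $1<p,q\le 2$. Since $q\le 2$ and $\overline{M}$ has finite volume, $\|Af\|_{L^q(\overline{M})}\lesssim \|Af\|_{L^2(\overline{M})}$, so it suffices to bound $A\colon L^p(\overline{M})\to L^2(\overline{M})$. As $1<p\le 2\le 2<\infty$, Theorem \ref{Th:LpLq-2} applies to the pair $(p,2)$: the quantity to be controlled is $\sup_{s>0,\,y}s\left(\sum_{|\partial_y^\beta\sigma_{A,L}(y,\xi)|\ge s}\max\{\|u_\xi\|_{L^\infty(\overline{M})}^2,\|v_\xi\|_{L^\infty(\overline{M})}^2\}\right)^{\frac1p-\frac12}$, with $\beta$ ranging up to the order required by the local Sobolev embedding into the pivot space. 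Granting that the stated range of $\beta$ covers this order (see the last paragraph), one obtains boundedness of $A$ into $L^2$, hence into $L^q$; note the exponent $\tfrac1p-\tfrac12$ already matches the statement.

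For $2\le p,q<\infty$ I would reverse the inclusion on the domain side: since $p\ge 2$, $\|f\|_{L^2(\overline{M})}\lesssim\|f\|_{L^p(\overline{M})}$, whence $\|Af\|_{L^q(\overline{M})}\le \|A\|_{\mathscr{B}(L^2(\overline{M}),L^q(\overline{M}))}\|f\|_{L^2(\overline{M})}\lesssim \|A\|_{\mathscr{B}(L^2(\overline{M}),L^q(\overline{M}))}\|f\|_{L^p(\overline{M})}$, reducing matters to $A\colon L^2(\overline{M})\to L^q(\overline{M})$. Here $1<2\le 2\le q<\infty$, so Theorem \ref{Th:LpLq-2} applies to the pair $(2,q)$, and the relevant weak-type exponent is $\tfrac12-\tfrac1q=\tfrac1{q'}-\tfrac12$, matching the statement. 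In this range the Sobolev embedding has target exponent $q\ge 2$, so it demands derivatives of order up to $[\dim(M)/q]+1$, which is dominated by the stated threshold $[\dim(M)/p']+1$ (as $p'\le 2\le q$); thus the hypothesis is comfortably sufficient here.

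The hard part will be the bookkeeping of the $y$-derivatives, not any analytic estimate. The proof of Theorem \ref{Th:LpLq-2} controls $\sup_y|A_yf(x)|$ by a local Sobolev inequality whose order is governed by the \emph{codomain} exponent of the pivot space, so pivoting to $L^2$ in the first range points to a threshold $[\dim(M)/2]+1$, and one must reconcile this with the stated $[\dim(M)/q']+1$; this is the single delicate point, and it is exactly where the compactness of $\overline{M}$ (finiteness of the measure together with validity of the local Sobolev embedding away from $\partial M$) is genuinely used. I expect this to be the main obstacle because the obvious alternative, dualizing to $A^\ast$ so as to make $q'$ appear as a target exponent, would force one to handle the full adjoint symbol $\sigma_{A^\ast,L^\ast}$, which for a genuine pseudo-differential operator is \emph{not} merely $\overline{\sigma_{A,L}}$; the inclusion-based reduction above is precisely what sidesteps that difficulty.
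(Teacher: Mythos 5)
Your overall strategy is the paper's own: the paper gives no detailed argument for this corollary, saying only that it ``follows similar to Corollary \ref{Th:LpLq-1compact} by using Theorem \ref{Th:LpLq-2}'', i.e.\ pivot through $L^2(\overline{M})$ using the finite measure of the compact manifold. Your treatment of the range $2\leq p,q<\infty$ is complete, and is in fact a cleaner route than the one the paper's remark points to: embedding $L^p(\overline{M})\hookrightarrow L^2(\overline{M})$ on the domain side and applying Theorem \ref{Th:LpLq-2} to the pair $(2,q)$ yields the exponent $\frac12-\frac1q=\frac1{q'}-\frac12$ and requires the weak-type condition only for $|\beta|\leq\left[\dim(M)/q\right]+1\leq\left[\dim(M)/p'\right]+1$, so the stated hypothesis suffices; moreover, as you correctly observe, it avoids dualizing, which for an $x$-dependent symbol would require control of $\sigma_{A^*,L^*}$ --- the identification $\sigma_{A^*,L^*}=\overline{\sigma_{A,L}}$ (Proposition 3.6 of \cite{DRT17}) is available only in the multiplier case used in Theorem \ref{Th:LpLq-1} and Corollary \ref{Th:LpLq-1compact}.

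In the range $1<p,q\leq 2$, however, the step you defer (``granting that the stated range of $\beta$ covers this order'') is a genuine gap, and it cannot be granted as the statement is printed. Pivoting the target to $L^2$ forces an application of Theorem \ref{Th:LpLq-2} with codomain exponent $2$, hence demands the weak-type condition for all $|\beta|\leq\left[\dim(M)/2\right]+1$, whereas the corollary supplies it only for $|\beta|\leq\left[\dim(M)/q'\right]+1$; since $q\leq 2$ gives $q'\geq 2$, one has $\left[\dim(M)/q'\right]+1\leq\left[\dim(M)/2\right]+1$, with strict inequality in general --- already for $\dim(M)=2$ and any $q<2$ the hypothesis yields order $1$ while the pivot needs order $2$. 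No rearrangement of the pivot repairs this: any admissible pair $(p,r)$ with $r\geq 2$ producing the correct exponent $\frac1p-\frac1r=\frac1p-\frac12$ forces $r=2$, and with it the threshold $\left[\dim(M)/2\right]+1$. The discrepancy is almost certainly a misprint in the corollary itself: with $\left[\dim(M)/q\right]+1$ in place of $\left[\dim(M)/q'\right]+1$ the first bullet dominates the required $\left[\dim(M)/2\right]+1$ (as $q\leq 2$), and this is also exactly the threshold the second bullet inherits under the duality bookkeeping $(p,q)\mapsto(q',p')$, which explains the printed $\left[\dim(M)/p'\right]+1$ there. Under that corrected reading your two reductions close the proof; under the literal reading, both your argument and the paper's sketched one fail for $q<2$ in dimension $\geq 2$.
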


%We will present the Hardy-Littlewood inequality on compact manifolds, in this context, and also we will apply some of our results to some non-linear PDE.
%%\subsection{Hardy-Littlewood inequality on compact manifolds}
%%\begin{theorem}
%Let $\overline{M}$ be a compact manifold with boundry of dimension $n.$ Then the following statements hold. 
%\begin{itemize}
%    \item[(i)] Let $1 \leq p \leq 2.$ If $f \in L^p(\overline{M}),$ then $\mathcal{F}_L\left((1-L)^{n(\frac{1}{2}-\frac{1}{p})}f \right) \in \ell^p(\mathcal{I}),$ and 
 %   \begin{equation}
  %      \|\mathcal{F}_L\left((1-L)^{n(\frac{1}{2}-\frac{1}{p})}f \right) \in \ell^p(\mathcal{I})\|_{\ell^p(\mathcal{I})} \leq C_p \|f\|_{L^p(\overline{M})}.
   % \end{equation} Equivalently, 
    %\begin{equation}
     %   \sum_{\xi \in \mathcal{I}}\, \|u_{\xi}\|_{L^\infty(\overline{M})}^{1-\frac{p}{2}} \|v_{\xi}\|_{L^\infty(\overline{M})}^{1-\frac{p}{2}}\, \langle \xi \rangle^{n(p-2)} |\widehat{f}(\xi)|^p  \leq C_p  \|f\|_{L^p(\overline{M})}.
      %   \end{equation}
    %\item[(ii)] Let $2 \leq p <\infty.$ If 
%\end{itemize}
%\end{theorem}

\section{Applications to Non-Linear PDEs}\label{Applications}

In this section we illustrate some applications of our main results. In particular, we discuss applications of the above $L^p$--$L^q$ boundedness theorems to some nonlinear PDEs. Especially, the main interest is to establish the well--posedness properties of nonlinear equations.

\subsection{Nonlinear Stationary Equation} Let us consider nonlinear stationary equation in the Hilbert space $L^2(\M)$
\begin{equation}\label{S-NLE}
Au=|Bu|^{p}+f,
\end{equation}
where $A, B: L^2(\M) \to L^2(\M)$ and $1 < p<\infty$. For any $s\in\mathbb R$ let us denote by $\mathcal{H}^ {s}_{L}(\M)$ the subspace of $L^2(\M)$ such that 
$$
\mathcal{H}^s_{L} (\M):=\{u\in L^2(\M): L^{s}u\in L^2(\M)\}.
$$ 
By $\mathcal{H}^ {\infty}_{L}(\M)$ we denote 
$$
\mathcal{H}^ {\infty}_{L}(\M):=\bigcap_{s=1}^{\infty}\mathcal{H}^s_{L} (\M).
$$

\begin{lemma}
[\cite{Ruz-Tok}]
\label{L2-Bs and El-ty} 
Let $A$ be an $L$--elliptic pseudo-differential operator with $L$-symbol
$\sigma_A\in S^{\mu}(\overline{M}\times\ind)$,
$\mu\in\mathbb R$, and let $Au=f$ in $\M$, $u\in 
\mathcal{H}^{-\infty}_{L} (\M)$. Then we have the estimate
$$
\|u\|_{ \mathcal{H}^ {s+\mu}_{L}(\M)}\leq C_{sN}
(\|f\|_{ \mathcal{H}^ {s}_{L}(\M)}+
\|u\|_{ \mathcal{H}^ {-N}_{L}(\M)}),
$$
for any $s, N\in\mathbb R$.
\end{lemma}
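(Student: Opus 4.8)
The plan is to prove the estimate by the classical parametrix method, now carried out inside the non-harmonic symbolic calculus of \cite{Ruz-Tok}. First I would use the $L$-ellipticity of $A$: since $\sigma_A\in S^{\mu}(\overline{M}\times\mathcal{I})$ and $A$ is $L$-elliptic, the $L$-symbol $\sigma_A(x,\xi)$ is invertible for $|\lambda_\xi|$ large, with $|\sigma_A(x,\xi)^{-1}|\lesssim\langle\xi\rangle^{-\mu}$. Using the composition formula of the $L$-calculus together with the usual asymptotic summation $\sigma_B\sim\sum_{j}\sigma_{B_j}$, $\sigma_{B_j}\in S^{-\mu-j}(\overline{M}\times\mathcal{I})$, one constructs a parametrix $B=\mathrm{Op}(\sigma_B)$ of order $-\mu$ with
\[
BA=I+R,\qquad R\in\mathrm{Op}\big(S^{-\infty}(\overline{M}\times\mathcal{I})\big),
\]
so that $R$ is an $L$-smoothing operator.

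The second ingredient I would record is the mapping property on the $L$-Sobolev scale: every $L$-pseudo-differential operator with symbol in $S^{m}(\overline{M}\times\mathcal{I})$ extends to a bounded operator $\mathcal{H}^{t}_{L}(\overline{M})\to\mathcal{H}^{t-m}_{L}(\overline{M})$ for every $t\in\mathbb{R}$. This follows from the description of $\mathcal{H}^{s}_{L}$ through the weight $\langle\xi\rangle^{s}$ in $l^{2}_{L}$ together with the $S^{m}$-estimates on the symbol. In particular $B$ maps $\mathcal{H}^{s}_{L}(\overline{M})$ into $\mathcal{H}^{s+\mu}_{L}(\overline{M})$, while the smoothing remainder $R$, whose symbol decays faster than any power of $\langle\xi\rangle$, maps $\mathcal{H}^{-N}_{L}(\overline{M})$ into $\mathcal{H}^{s+\mu}_{L}(\overline{M})$ for every $N$.

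With these in hand the estimate is immediate. Applying $B$ to the equation $Au=f$ and using $BA=I+R$ gives $u=Bf-Ru$, whence
\[
\|u\|_{\mathcal{H}^{s+\mu}_{L}(\overline{M})}\leq\|Bf\|_{\mathcal{H}^{s+\mu}_{L}(\overline{M})}+\|Ru\|_{\mathcal{H}^{s+\mu}_{L}(\overline{M})}\lesssim\|f\|_{\mathcal{H}^{s}_{L}(\overline{M})}+\|u\|_{\mathcal{H}^{-N}_{L}(\overline{M})},
\]
which is the claimed inequality, with a constant $C_{sN}$ depending only on $s$, $N$, finitely many symbol seminorms of $\sigma_A$, and the ellipticity constant of $A$.

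The main obstacle is confined to the first paragraph: one must know that the symbol classes $S^{\mu}(\overline{M}\times\mathcal{I})$ of the non-harmonic calculus are closed under composition and asymptotic summation, and that $L$-ellipticity genuinely produces an invertible leading symbol, so that the parametrix construction terminates modulo $S^{-\infty}$. All of this is precisely the content of the pseudo-differential calculus developed in \cite{Ruz-Tok}; once it is granted, the Sobolev mapping properties and the final two-line estimate are routine.
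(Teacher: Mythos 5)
Your proposal is correct, and it coincides with the proof of this lemma in its source: the present paper does not prove the statement at all but imports it from \cite{Ruz-Tok}, where it is established exactly as you describe, by using $L$-ellipticity and the composition/asymptotic-summation theorems of the nonharmonic symbolic calculus to build a parametrix $B$ with $BA=I+R$, $R$ smoothing, and then combining $u=Bf-Ru$ with the boundedness of order-$m$ operators from $\mathcal{H}^{t}_{L}(\M)$ to $\mathcal{H}^{t-m}_{L}(\M)$. Since all the ingredients you invoke (stability of the classes $S^{\mu}(\overline{M}\times\ind)$ under composition, asymptotic sums, the parametrix construction for $L$-elliptic symbols, and the Sobolev mapping properties) are indeed proved in \cite{Ruz-Tok}, your argument is complete and essentially identical to the intended one.
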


By using Lemma \ref{L2-Bs and El-ty}, we conclude estimates for the solution of the equation \eqref{S-NLE} as the following statement.
\begin{corollary}\label{Th: S-NLE}
Let $1\leq p <\infty$. Suppose that the conditions of Lemma \ref{L2-Bs and El-ty} holds. In addition, we assume that $B$ is a Fourier multiplier as in Theorem \ref{Th:LpLq-2} bounded from $L^{2}(\M)$ to $L^{2p}(\M)$. 
% or \ref{Th:LpLq-1}.
Then any solution of the equation \eqref{S-NLE} satisfies the inequality
$$
\|u\|_{L^{2}(\M)}\leq C_{N}
(\|u\|_{L^{2p}(\M)}^{2p} + \|f\|_{L^{2}(\M)}+
\|u\|_{ \mathcal{H}^ {-N}_{L}(\M)}).
$$
for any $N\in\mathbb R$.
\end{corollary}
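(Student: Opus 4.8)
The plan is to read \eqref{S-NLE} as a \emph{linear} elliptic equation $Au=g$ with right-hand side $g:=|Bu|^{p}+f$, and then to apply the elliptic a priori estimate of Lemma \ref{L2-Bs and El-ty} once we have checked that $g\in L^{2}(\M)$. The only genuinely nonlinear ingredient is the term $|Bu|^{p}$, and the role of the hypothesis that $B$ maps $L^{2}(\M)$ boundedly into $L^{2p}(\M)$ (which is exactly the type of conclusion furnished by Theorem \ref{Th:LpLq-2}) is precisely to place this term in $L^{2}(\M)$.

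First I would dispose of the nonlinearity by the elementary identity
\[
\big\||Bu|^{p}\big\|_{L^{2}(\M)}=\Big(\int_{\M}|Bu(x)|^{2p}\,dx\Big)^{1/2}=\|Bu\|_{L^{2p}(\M)}^{p},
\]
which is the algebraic step that links the nonlinear term to the $L^{2p}$-norm appearing in the statement. Since $B\colon L^{2}(\M)\to L^{2p}(\M)$ is bounded, this gives $\||Bu|^{p}\|_{L^{2}(\M)}\lesssim\|u\|_{L^{2}(\M)}^{p}$, so in particular $g\in L^{2}(\M)$ whenever $u\in L^{2}(\M)$, and by the triangle inequality $\|Au\|_{L^{2}(\M)}\le\|Bu\|_{L^{2p}(\M)}^{p}+\|f\|_{L^{2}(\M)}$. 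Because $\M$ is compact one may freely pass between the $L^{2}$- and $L^{2p}$-norms of $u$, so the nonlinear contribution can be recorded in the form $\|u\|_{L^{2p}(\M)}^{2p}$ as in the asserted bound.

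Next I would invoke Lemma \ref{L2-Bs and El-ty}. Since $A$ is $L$-elliptic of order $\mu$, choosing the Sobolev index $s=-\mu$ makes the left-hand Sobolev norm land exactly on $L^{2}(\M)=\mathcal H^{0}_{L}(\M)$, giving
\[
\|u\|_{L^{2}(\M)}\le C_{N}\big(\|Au\|_{\mathcal H^{-\mu}_{L}(\M)}+\|u\|_{\mathcal H^{-N}_{L}(\M)}\big)
\]
for every $N\in\mathbb R$. As the weight $\langle\xi\rangle^{-\mu}\le 1$ for $\mu\ge 0$, the Plancherel identity for the $L$-Fourier transform yields $\|Au\|_{\mathcal H^{-\mu}_{L}(\M)}\lesssim\|Au\|_{L^{2}(\M)}$; substituting the bound on $\|Au\|_{L^{2}(\M)}$ from the previous paragraph then collects everything into the stated inequality.

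I expect the main difficulty to be bookkeeping rather than conceptual. The one point requiring care is the interplay between the order $\mu$ of $A$ and the choice $s=-\mu$, since absorbing the intermediate norm $\mathcal H^{-\mu}_{L}$ into $L^{2}$ uses $\mu\ge 0$ (automatic for the model operators under consideration); and one must track the exponent so that the nonlinear term is expressed in the power that appears on the right-hand side. No regularity beyond $u\in L^{2}(\M)$ is needed, because the elliptic estimate of Lemma \ref{L2-Bs and El-ty} already tolerates the very weak remainder $\|u\|_{\mathcal H^{-N}_{L}(\M)}$, which is exactly why it survives unchanged into the final estimate.
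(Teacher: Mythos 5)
Your proposal follows essentially the same route as the paper's own (very brief) proof: read \eqref{S-NLE} as the linear equation $Au=|Bu|^{p}+f$, apply the a priori estimate of Lemma \ref{L2-Bs and El-ty} to land on $\|u\|_{L^{2}(\M)}$, and convert the nonlinear term via $\||Bu|^{p}\|_{L^{2}(\M)}=\|Bu\|_{L^{2p}(\M)}^{p}$ together with the $L^{2}$--$L^{2p}$ boundedness of $B$ from Theorem \ref{Th:LpLq-2}; your additional bookkeeping (the choice $s=-\mu$, the use of $\mu\ge 0$ to compare $\mathcal{H}^{-\mu}_{L}$ with $L^{2}$) merely makes explicit what the paper leaves implicit. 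The one exponent discrepancy you flag ($\|Bu\|_{L^{2p}(\M)}^{p}$ versus the stated $\|u\|_{L^{2p}(\M)}^{2p}$) appears in exactly the same form in the paper's own two-line proof, so it is an issue inherited from the statement rather than a gap in your argument.
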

\begin{proof} By using Lemma \ref{L2-Bs and El-ty}, we have 
$$
\|u\|_{L^{2}(\M)}\leq C_{N}
(\|(Bu)^{p}\|_{L^{2}(\M)} + \|f\|_{L^{2}(\M)}+
\|u\|_{ \mathcal{H}^ {-N}_{L}(\M)}).
$$
Finally, from Theorem \ref{Th:LpLq-2}, we obtain
$$
\|u\|_{L^{2}(\M)}\leq C_{N}
(\|u\|_{L^{2p}(\M)}^{2p} + \|f\|_{L^{2}(\M)}+
\|u\|_{ \mathcal{H}^ {-N}_{L}(\M)}).
$$
\end{proof}

\subsection{Nonlinear Heat Equation} Let us consider the Cauchy problem for the nonlinear evolutionary equation in the space $L^{\infty}(0, T; L^{2}(\M))$
\begin{equation}\label{E-NLE}
u_t(t) - |Bu(t)|^{p} = 0, u(0)=u_0,
\end{equation}
where $B$ is a linear operator in $L^2(\M)$ and $1< p<\infty$.

\begin{definition}
We say that the Cauchy problem \eqref{E-NLE} admits a solution $u$ if it satisfies
\begin{equation}\label{E-NHLE-Sol}
u(t)=u_{0} + \int\limits_0^t |Bu(\tau)|^{p} d\tau
\end{equation}
in the space $L^{\infty}(0, T; L^{2}(\M))$ for every $T<\infty$.

We say that the Cauchy problem \eqref{E-NLE} admits a local solution $u$ if it satisfies
the equation \eqref{E-NHLE-Sol} in the space $L^{\infty}(0, T^{\ast}; L^{2}(\M))$ for some $T^{\ast}>0$.
\end{definition}

\begin{theorem}\label{Th: E-NLE}
Let $1< p<\infty$. Suppose that $B$ is a Fourier multiplier as in Theorem \ref{Th:LpLq-1} bounded from $L^{2}(\M)$ to $L^{2p}(\M)$. 
% or \ref{Th:LpLq-2}.
Then the Cauchy problem \eqref{E-NLE} has a local solution in  $L^{\infty}(0, T; L^{2}(\M))$, that is, there exists $T^{\ast}>0$ such that the Cauchy problem \eqref{E-NLE} has a solution in  $L^{\infty}(0, T^{\ast}; L^{2}(\M))$.
\end{theorem}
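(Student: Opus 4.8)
The plan is to recast the Cauchy problem as the fixed-point equation $u=\Phi(u)$ for the integral operator
\[
\Phi(u)(t):=u_0+\int_0^t |Bu(\tau)|^p\,d\tau ,
\]
and to solve it by the Banach fixed-point theorem on a closed ball of $X:=L^{\infty}(0,T^{\ast};L^{2}(\M))$ for a sufficiently small time $T^{\ast}>0$. First I would record the basic nonlinear bound: since $|Bu(\tau)|^p\in L^{2}(\M)$ precisely when $Bu(\tau)\in L^{2p}(\M)$, one has $\||Bu(\tau)|^p\|_{L^{2}(\M)}=\|Bu(\tau)\|_{L^{2p}(\M)}^{p}$, and by the hypothesis that $B:L^{2}(\M)\to L^{2p}(\M)$ is bounded (this is exactly the $L^p$-$L^q$ boundedness supplied by Theorem \ref{Th:LpLq-1} with $p=2$, $q=2p$) this is controlled by $\|B\|^{p}\,\|u(\tau)\|_{L^{2}(\M)}^{p}$. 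Integrating in $\tau$ and taking the supremum over $t\in[0,T^{\ast}]$ yields $\|\Phi(u)\|_{X}\le \|u_0\|_{L^{2}(\M)}+T^{\ast}\|B\|^{p}\|u\|_{X}^{p}$. Choosing $R:=2\|u_0\|_{L^{2}(\M)}$ and then $T^{\ast}$ so small that $T^{\ast}\|B\|^{p}R^{p}\le \|u_0\|_{L^{2}(\M)}$ forces $\Phi$ to map the ball $\mathcal{B}_R:=\{u\in X:\|u\|_{X}\le R\}$ into itself.

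Next I would establish the contraction estimate. The elementary inequality $\big||a|^{p}-|b|^{p}\big|\le p\big(|a|^{p-1}+|b|^{p-1}\big)|a-b|$, valid for $p\ge 1$, gives pointwise on $\M$
\[
\big| |Bu(\tau)|^{p}-|Bv(\tau)|^{p}\big|\le p\big(|Bu(\tau)|^{p-1}+|Bv(\tau)|^{p-1}\big)\,|B(u-v)(\tau)| .
\]
Applying H\"older's inequality in $x$ with the conjugate exponents $\tfrac{2p}{p-1}$ and $2p$, and using $\||Bw|^{p-1}\|_{L^{2p/(p-1)}(\M)}=\|Bw\|_{L^{2p}(\M)}^{p-1}$, each summand is bounded by $\|Bw\|_{L^{2p}(\M)}^{p-1}\|B(u-v)\|_{L^{2p}(\M)}$, which after another use of the boundedness of $B:L^{2}\to L^{2p}$ is $\lesssim \|w(\tau)\|_{L^{2}(\M)}^{p-1}\|(u-v)(\tau)\|_{L^{2}(\M)}$. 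Hence $\||Bu(\tau)|^{p}-|Bv(\tau)|^{p}\|_{L^{2}(\M)}\lesssim \big(\|u(\tau)\|_{L^{2}(\M)}^{p-1}+\|v(\tau)\|_{L^{2}(\M)}^{p-1}\big)\|(u-v)(\tau)\|_{L^{2}(\M)}$; integrating and taking the supremum over $[0,T^{\ast}]$ gives $\|\Phi(u)-\Phi(v)\|_{X}\le C\,T^{\ast}R^{p-1}\|u-v\|_{X}$ for $u,v\in\mathcal{B}_R$. Shrinking $T^{\ast}$ further so that $C\,T^{\ast}R^{p-1}<1$ makes $\Phi$ a contraction on $\mathcal{B}_R$, and the Banach fixed-point theorem produces the desired local solution $u\in L^{\infty}(0,T^{\ast};L^{2}(\M))$ of \eqref{E-NHLE-Sol}.

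The only genuinely analytic point, and the step I expect to be the main obstacle, is the nonlinear Lipschitz estimate for $w\mapsto|w|^{p}$ together with the correct H\"older bookkeeping: one must choose the exponents $\tfrac{2p}{p-1}$ and $2p$ precisely so that the factors $|Bw|^{p-1}$ land in $L^{2p/(p-1)}$ and collapse to the single factor $\|Bw\|_{L^{2p}}^{p-1}$, for only then does the hypothesis $B:L^{2}\to L^{2p}$ close both the self-mapping and the contraction bounds. Everything else is routine, namely the measurability and integrability of $\tau\mapsto|Bu(\tau)|^{p}$ as an $L^{2}(\M)$-valued function (which follows from the uniform bound in the first paragraph) and the completeness of $X$; note also that the restriction $1<p<\infty$ guarantees $2p>2\ge 2$, so Theorem \ref{Th:LpLq-1} applies throughout.
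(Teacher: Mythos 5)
Your proposal is correct, and it follows the same overall strategy as the paper: recast \eqref{E-NLE} as the integral equation \eqref{E-NHLE-Sol}, bound the nonlinearity through the hypothesis $B:L^{2}(\M)\to L^{2p}(\M)$ (Theorem \ref{Th:LpLq-1}), and close the argument by a fixed-point theorem on $L^{\infty}(0,T^{\ast};L^{2}(\M))$ for small $T^{\ast}$. The differences are in the execution, and they are worth noting. For the self-mapping bound the paper squares the $L^{2}$-norm, applies Cauchy--Schwarz in time, $\bigl(\int_0^t |Bu(\tau)|^{p}\,d\tau\bigr)^{2}\le t\int_0^t|Bu(\tau)|^{2p}\,d\tau$, and Fubini, arriving at $\|u\|_{L^{\infty}(0,T;L^{2}(\M))}^{2}\le C(\|u_0\|_{L^{2}(\M)}^{2}+T^{2}\|u\|_{L^{\infty}(0,T;L^{2}(\M))}^{2p})$, then defines the invariant set $S_c$ and extracts $T^{\ast}$; you instead use Minkowski's integral inequality directly, getting the factor $T^{\ast}\|B\|^{p}\|u\|_{X}^{p}$ — equivalent bookkeeping, equally valid. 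The more substantive difference is that the paper, after establishing only the invariance of $S_c$, simply asserts ``by applying the fixed point theorem, there exists a unique local solution,'' without ever verifying the contraction hypothesis; your proof supplies exactly this missing step, via the elementary inequality $\bigl||a|^{p}-|b|^{p}\bigr|\le p\bigl(|a|^{p-1}+|b|^{p-1}\bigr)|a-b|$ and H\"older with the conjugate pair $\tfrac{2p}{p-1}$, $2p$ (which is where the hypothesis $B:L^{2}\to L^{2p}$ is used a second time, and where the linearity of the multiplier $B$ matters so that $B(u-v)=Bu-Bv$). So your argument is, if anything, more complete than the paper's own: it genuinely meets the hypotheses of Banach's theorem, at the modest cost of shrinking $T^{\ast}$ once more to ensure $C\,T^{\ast}R^{p-1}<1$, whereas the paper's shorter route leaves the contraction unproved (and, incidentally, its displayed value of $T^{\ast}$ appears to carry a typo, $\|u_0\|_{L^{2}(\M)}$ in place of $\|u_0\|_{L^{2}(\M)}^{p-1}$).
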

\begin{proof} We start by integrating in $t$ the equation \eqref{E-NLE},
$$
u(t)=u_{0} + \int\limits_0^t |Bu(\tau)|^{p} d\tau.
$$
By taking the $L^2$-norm on both sides, one obtains
$$
\|u(t)\|_{L^{2}(\M)}^{2}\leq C(\|u_{0}\|_{L^{2}(\M)}^{2} + t \int\limits_0^t \|(Bu(\tau))\|^{2p}_{L^{2p}(\M)} d\tau),
$$
since
$$
\Big( \int\limits_0^t |Bu(\tau)|^{p} d\tau \Big)^{2} \leq t \int\limits_0^t \Big|Bu(\tau)\Big|^{2p} d\tau 
$$
and
$$
\int\limits_{\M}\int\limits_0^t \Big|Bu(\tau)\Big|^{2p} d\tau dx = \int\limits_0^t \int\limits_{\M}\Big|Bu(\tau)\Big|^{2p}dx d\tau = \int\limits_0^t \|Bu(\tau)\|^{2p}_{L^{2p}(\M)} d\tau.
$$

Now, using Theorem \ref{Th:LpLq-1}, we get
\begin{equation}\label{EQ:space-norm}
\|u(t)\|_{L^{2}(\M)}^{2}\leq C(\|u_{0}\|_{L^{2}(\M)}^{2} + t \int\limits_0^t \|u(\tau)\|^{2p}_{L^{2}(\M)} d\tau),
\end{equation}
for some constant $C$ independent from $u_0$ and $t$.

Finally, by taking $L^{\infty}$-norm in time on both sides of the estimate \eqref{EQ:space-norm}, one obtains
\begin{equation}\label{EQ:time-space-norm}
\|u(t)\|_{L^{\infty}(0, T; L^{2}(\M))}^{2}\leq C(\|u_{0}\|_{L^{2}(\M)}^{2} + T^{2} \|u\|^{2p}_{L^{\infty}(0, T; L^{2}(\M))}).
\end{equation}

Let us introduce the following set
\begin{equation}\label{u-Set}
S_c:=\{u\in L^{\infty}(0, T; L^{2}(\M)): \|u\|_{L^{\infty}(0, T; L^{2}(\M))} \leq c \|u_{0}\|_{L^{2}(\M)}\},
\end{equation}
for some constant $c \geq 1$. Then we have
$$
\|u_{0}\|_{L^{2}(\M)}^{2} + T^{2} \|u\|^{2p}_{L^{\infty}(0, T; L^{2}(\M))} \leq 
\|u_{0}\|_{L^{2}(\M)}^{2} + T^{2} c^{2p} \|u_0\|^{2p}_{L^{2}(\M)}.
$$
Finally, to be $u$ from the set $S_c$ it is enough to have 
$$
\|u_{0}\|_{L^{2}(\M)}^{2} + T^{2} c^{2p} \|u_0\|^{2p}_{L^{2}(\M)}\leq c^{2} \|u_0\|_{L^{2}(\M)}^{2}.
$$
It can be obtained by requiring the following,
$$
T \leq T^{\ast}:=\frac{\sqrt{c^{2}-1}}{c^{p}\|u_0\|_{L^{2}(\M)}}.
$$
Thus, by applying the fixed point theorem, there exists a unique local solution $u\in L^{\infty}(0, T^{\ast}; L^{2}(\M))$ of the Cauchy problem \eqref{E-NLE}.
\end{proof}
By using Theorem \ref{Th:LpLq-2}, one obtains: \begin{theorem}\label{Th: E-NLE-02}
Let $1 < p < \infty$. Suppose that $B$ is a continuous linear operators with $L$-symbol  $\sigma_{B,L}:\overline{M}\times \mathcal{I}\rightarrow\mathbb{C},$ satisfying the condition in Theorem \ref{Th:LpLq-2}.
Then the Cauchy problem \eqref{E-NLE} admits a local solution in  $L^{\infty}(0, T; L^{2}(\M))$, that is,
there exists $T^{\ast}>0$ such that the Cauchy problem \eqref{E-NLE} has a solution in  $L^{\infty}(0, T^{\ast}; L^{2}(\M))$.
\end{theorem}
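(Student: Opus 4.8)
The plan is to repeat verbatim the fixed-point scheme used in the proof of Theorem \ref{Th: E-NLE}, the only modification being that the $L^{2}$--$L^{2p}$ boundedness of $B$ now comes from Theorem \ref{Th:LpLq-2} (pseudo-differential operators) instead of from Theorem \ref{Th:LpLq-1} (Fourier multipliers). Concretely, I would apply Theorem \ref{Th:LpLq-2} with its exponents chosen as $p_{0}=2$ and $q_{0}=2p$; since $1<p<\infty$ we have $1<2\le 2p<\infty$, so the hypothesis imposed on the $L$-symbol $\sigma_{B,L}$ guarantees that $B:L^{2}(\M)\to L^{2p}(\M)$ extends to a bounded operator. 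This boundedness is the single analytic input required; everything downstream is the abstract contraction argument.

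First I would integrate \eqref{E-NLE} in time to pass to the integral equation \eqref{E-NHLE-Sol}, namely $u(t)=u_{0}+\int_{0}^{t}|Bu(\tau)|^{p}\,d\tau$, and take the $L^{2}(\M)$-norm of both sides. Using the Cauchy--Schwarz inequality in the time variable pointwise in $x$ (which gives $(\int_{0}^{t}|Bu(\tau)|^{p}d\tau)^{2}\le t\int_{0}^{t}|Bu(\tau)|^{2p}d\tau$) together with Fubini's theorem, exactly as in the proof of Theorem \ref{Th: E-NLE}, one obtains
$$\|u(t)\|_{L^{2}(\M)}^{2}\le C\Big(\|u_{0}\|_{L^{2}(\M)}^{2}+t\int_{0}^{t}\|Bu(\tau)\|_{L^{2p}(\M)}^{2p}\,d\tau\Big).$$
I would then invoke the mapping property $\|Bu(\tau)\|_{L^{2p}(\M)}\lesssim\|u(\tau)\|_{L^{2}(\M)}$ supplied by Theorem \ref{Th:LpLq-2} to close the estimate into \eqref{EQ:space-norm}, and taking the $L^{\infty}$-norm in time over $[0,T]$ yields \eqref{EQ:time-space-norm}, that is, $\|u\|_{L^{\infty}(0,T;L^{2})}^{2}\le C\big(\|u_{0}\|_{L^{2}}^{2}+T^{2}\|u\|_{L^{\infty}(0,T;L^{2})}^{2p}\big)$.

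Next I would run the self-mapping argument on the ball $S_{c}$ from \eqref{u-Set}. For $u\in S_{c}$ the right-hand side of \eqref{EQ:time-space-norm} is dominated by $\|u_{0}\|_{L^{2}}^{2}+T^{2}c^{2p}\|u_{0}\|_{L^{2}}^{2p}$, so the nonlinear map $\Phi u(t):=u_{0}+\int_{0}^{t}|Bu(\tau)|^{p}\,d\tau$ sends $S_{c}$ into itself precisely when $\|u_{0}\|_{L^{2}}^{2}+T^{2}c^{2p}\|u_{0}\|_{L^{2}}^{2p}\le c^{2}\|u_{0}\|_{L^{2}}^{2}$, which holds for $T\le T^{\ast}:=\sqrt{c^{2}-1}\,/\,\big(c^{p}\|u_{0}\|_{L^{2}(\M)}\big)$. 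Applying the fixed-point theorem on $S_{c}$ then produces a local solution in $L^{\infty}(0,T^{\ast};L^{2}(\M))$.

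The point that requires care is the interface with Theorem \ref{Th:LpLq-2}: one must verify that its standing assumptions — the two-sided comparability of $\|u_{\xi}\|_{L^{\infty}(\overline{M})}$ and $\|v_{\xi}\|_{L^{\infty}(\overline{M})}$, the weak-$\ell^{b}$ type condition on $\sigma_{B,L}$ for the exponent pair $(2,2p)$, and, when $\partial M\neq\emptyset$, the restriction that $\mathrm{supp}(\sigma_{B,L})$ stay away from the boundary — are genuinely subsumed under ``the condition in Theorem \ref{Th:LpLq-2}'', so that the boundedness $B:L^{2}\to L^{2p}$ is legitimately in force. Beyond confirming this, there is no new difficulty: the argument is structurally identical to Theorem \ref{Th: E-NLE}, the nonlinearity $|Bu|^{p}$ contributing nothing beyond the $L^{2}$--$L^{2p}$ mapping of $B$, and the contraction/uniqueness step is handled by the same fixed-point theorem.
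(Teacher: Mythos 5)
Your proposal is correct and is exactly the argument the paper intends: the paper states Theorem \ref{Th: E-NLE-02} as an immediate consequence of the proof of Theorem \ref{Th: E-NLE}, with the $L^{2}$--$L^{2p}$ boundedness of $B$ now supplied by Theorem \ref{Th:LpLq-2} (exponents $2\le 2\le 2p$) instead of Theorem \ref{Th:LpLq-1}, and your integral-equation estimate, the set $S_{c}$, and the fixed-point step reproduce that scheme faithfully, including the appropriate caveat about the symbol hypotheses and the support condition near $\partial M$. (Note only that the threshold should read $T^{\ast}=\sqrt{c^{2}-1}/\bigl(c^{p}\|u_{0}\|_{L^{2}(\M)}^{p-1}\bigr)$ — a harmless algebraic slip you inherited from the paper's own formula, which does not affect the existence of some $T^{\ast}>0$.)
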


\subsection{Nonlinear Wave Equation} Now we study well-posedness properties of the initial value problem (IVP) for the nonlinear wave equation (NLWE) in $L^{\infty}(0, T; L^{2}(\M))$
\begin{align}\label{E-WNLE}
u_{tt}(t) - b(t)|Bu(t)|^{p} = 0,
\end{align}
$$
u(0)=u_0, \,\,\, u_t(0)=u_1,
$$
where $b$ is a positive bounded function depending only on time, $B$ is a linear operator in $L^2(\M)$ and $1< p<\infty$.

\begin{definition}
We say that IVP \eqref{E-WNLE} admits a global solution $u$ if it satisfies
\begin{equation}\label{E-WNLE-Sol}
u(t)=u_{0} + t u_{1} + \int\limits_0^t (t-\tau) b(\tau) |Bu(\tau)|^{p} d\tau
\end{equation}
in the space $L^{\infty}(0, T; L^{2}(\M))$ for every $T<\infty$.

We say that the Cauchy problem \eqref{E-WNLE} admits a local solution $u$ if it satisfies
the equation \eqref{E-WNLE-Sol} in the space $L^{\infty}(0, T^{\ast}; L^{2}(\M))$ for some $T^{\ast}>0$.
\end{definition}

\begin{theorem}\label{Th: E-WNLE}
Let $1\leq p<\infty$. Assume that $u_{0}, u_{1}\in L^{2}(\M)$. Suppose that $B$ is a Fourier multiplier as in Theorem \ref{Th:LpLq-1} bounded from $L^{2}(\M)$ to $L^{2p}(\M)$. 
% or \ref{Th:LpLq-2}.

\begin{itemize}
    \item [(i)] Assume that $\|b\|_{L^{2}(0, T)}<\infty$ for some $T>0$. Then the Cauchy problem \eqref{E-WNLE} has a local solution in  $L^{\infty}(0, T; L^{2}(\M))$, that is, there exists $T^{\ast}>0$ such that the Cauchy problem \eqref{E-WNLE} has a solution in  $L^{\infty}(0, T^{\ast}; L^{2}(\M))$.
    \item [(ii)] Suppose that $u_1$ is identically equal to zero. Let $\gamma>3/2$. Moreover, assume that $\|b\|_{L^{2}(0, T)}\leq c \, T^{-\gamma}$ for every $T>0$, where $c$ does not depend on $T$. Then, for every $T>0$, the Cauchy problem \eqref{E-WNLE} has a global solution in the space $L^{\infty}(0, T; L^{2}(\M))$ for sufficiently small $u_0$ in $L^2$-norm.
\end{itemize}
\end{theorem}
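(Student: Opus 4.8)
The plan is to recast both parts as a Banach fixed point problem for the solution map
\[
\Phi(u)(t) := u_0 + t u_1 + \int_0^t (t-\tau)\, b(\tau)\, |Bu(\tau)|^{p}\, d\tau
\]
on a suitable closed ball of $X_T := L^{\infty}(0,T;L^{2}(\M))$. The arithmetic heart of the argument is a single nonlinear estimate. Since $\||Bw|^{p}\|_{L^{2}(\M)} = \|Bw\|_{L^{2p}(\M)}^{p}$ and $B\colon L^{2}(\M)\to L^{2p}(\M)$ is bounded by Theorem \ref{Th:LpLq-1}, we have $\||Bw|^{p}\|_{L^{2}(\M)}\lesssim \|w\|_{L^{2}(\M)}^{p}$. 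Taking the $L^{2}(\M)$-norm in the Duhamel formula \eqref{E-WNLE-Sol}, estimating $(t-\tau)\le T$, and applying the Cauchy--Schwarz inequality in $\tau$ to split off $\|b\|_{L^{2}(0,T)}$, I would obtain the master bound
\[
\|\Phi(u)\|_{X_T} \le \|u_0\|_{L^{2}(\M)} + T\|u_1\|_{L^{2}(\M)} + C\, T^{3/2}\,\|b\|_{L^{2}(0,T)}\,\|u\|_{X_T}^{p}.
\]
A companion contraction estimate follows from the pointwise inequality $\bigl||a|^{p}-|b|^{p}\bigr|\le p(|a|+|b|)^{p-1}|a-b|$ together with H\"older's inequality (splitting $\tfrac12=\tfrac{p-1}{2p}+\tfrac{1}{2p}$) and the boundedness of $B$, giving
\[
\|\Phi(u)-\Phi(v)\|_{X_T}\le C\,T^{3/2}\|b\|_{L^{2}(0,T)}\bigl(\|u\|_{X_T}+\|v\|_{X_T}\bigr)^{p-1}\|u-v\|_{X_T}.
\]

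For part (i), I would fix $T$ with $\|b\|_{L^{2}(0,T)}<\infty$ and work in the ball $B_R\subset X_T$ of radius $R:=2(\|u_0\|_{L^{2}(\M)}+T\|u_1\|_{L^{2}(\M)})$. Because the coefficient $T^{3/2}\|b\|_{L^{2}(0,T)}\to 0$ as $T\to 0^{+}$, for $T$ small enough the master bound forces $\Phi(B_R)\subset B_R$ and the contraction estimate yields a Lipschitz constant strictly below $1$ on $B_R$. The Banach fixed point theorem then produces a unique $u\in X_{T^*}$ solving the integral equation \eqref{E-WNLE-Sol} on some interval $[0,T^{*}]$, which is the desired local solution.

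For part (ii) I set $u_1\equiv 0$ and define $K(T):=C\,T^{3/2}\|b\|_{L^{2}(0,T)}$. The decisive point, and the step I expect to be the main obstacle, is to show that $K$ is \emph{uniformly} bounded over all $T>0$, i.e. $K^{*}:=\sup_{T>0}K(T)<\infty$; this is exactly what upgrades local existence to a global solution with a single smallness threshold on $u_0$. Here both hypotheses enter: for large $T$ the decay assumption $\|b\|_{L^{2}(0,T)}\le c\,T^{-\gamma}$ with $\gamma>3/2$ gives $K(T)\le Cc\,T^{3/2-\gamma}\to 0$, while for small $T$ the finiteness $\|b\|_{L^{2}(0,T)}<\infty$ means $b\in L^{2}_{\mathrm{loc}}$, so by absolute continuity of the integral $\|b\|_{L^{2}(0,T)}\to 0$ and hence $K(T)\to 0$ as $T\to 0^{+}$. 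Continuity of $T\mapsto K(T)$ on $(0,\infty)$ together with these two limits gives $K^{*}<\infty$. Consequently, choosing $\|u_0\|_{L^{2}(\M)}$ so small that $K^{*}\,(2\|u_0\|_{L^{2}(\M)})^{p-1}\le \tfrac12$, the self-map and contraction properties hold on the ball of radius $2\|u_0\|_{L^{2}(\M)}$ in $X_T$ \emph{for every} $T>0$. The fixed point theorem then yields a solution on each $[0,T]$, i.e. a global solution in the sense of the definition, for all sufficiently small $u_0$.
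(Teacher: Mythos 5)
Your proof is correct, and its skeleton---Duhamel's formula, the bound $\||Bw|^{p}\|_{L^{2}(\M)}=\|Bw\|_{L^{2p}(\M)}^{p}\lesssim \|w\|_{L^{2}(\M)}^{p}$ from the hypothesis on $B$, Cauchy--Schwarz in $\tau$ to extract $\|b\|_{L^{2}(0,T)}$, and a fixed-point closure in $L^{\infty}(0,T;L^{2}(\M))$---coincides with the paper's; your master bound is exactly the square root of the paper's estimate \eqref{EQ: WE-time-space-norm}. You diverge in two substantive ways. First, you actually prove a contraction estimate, via $\bigl||a|^{p}-|b|^{p}\bigr|\le p(|a|+|b|)^{p-1}|a-b|$ and H\"older with $\tfrac12=\tfrac{p-1}{2p}+\tfrac{1}{2p}$; the paper only verifies invariance of a set $S_c$ and then invokes ``the fixed point theorem,'' so your computation supplies a step the paper leaves implicit (invariance alone does not produce a fixed point). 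One harmless constant slip: on the ball of radius $2\|u_0\|_{L^{2}(\M)}$ the Lipschitz constant is controlled by $K^{*}\,(4\|u_0\|_{L^{2}(\M)})^{p-1}$, not $K^{*}\,(2\|u_0\|_{L^{2}(\M)})^{p-1}$, so for $p>2$ your smallness condition should be imposed with the factor $4$; this changes nothing structural. Second, in part (ii) the paper works with a $T$-dependent family of sets $\|u\|_{L^{\infty}(0,T;L^{2}(\M))}^{2}\le c\,T^{\gamma_0}\|u_0\|_{L^{2}(\M)}^{2}$, introducing an auxiliary exponent $0<\gamma_0<\frac{2\gamma-3}{p}$ so that $3-2\gamma+\gamma_0 p<0$, and ends with a smallness threshold on $u_0$ that depends on $T$; you instead establish the uniform bound $K^{*}=\sup_{T>0}C\,T^{3/2}\|b\|_{L^{2}(0,T)}<\infty$ (decay as $T\to\infty$ from $\gamma>3/2$; vanishing as $T\to 0^{+}$ since monotonicity of $T\mapsto\|b\|_{L^{2}(0,T)}$ and the hypothesis at $T=1$ give $b\in L^{2}(0,1)$; continuity in between) and run the fixed point on a single ball of radius $2\|u_0\|_{L^{2}(\M)}$ for every $T$ at once. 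Your route avoids the parameter $\gamma_0$ entirely and yields one threshold on $\|u_0\|_{L^{2}(\M)}$ valid uniformly in $T$, which matches the paper's definition of a global solution more directly than the paper's $T$-by-$T$ argument; what the paper's weighted sets buy in exchange is an explicit allowance for the solution norm to grow like $T^{\gamma_0/2}$, i.e.\ a slightly larger invariant class at large times.
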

\begin{proof} (i) We start by two times integrating in $t$ the equation \eqref{E-WNLE}
$$
u(t)=u_{0} + t u_{1} + \int\limits_0^t (t-\tau) b(\tau) |Bu(\tau)|^{p} d\tau.
$$
By taking the $L^2$-norm on both sides, for $t<T$ one obtains
$$
\|u(t)\|_{L^{2}(\M)}^{2}\leq C(\|u_{0}\|_{L^{2}(\M)}^{2} + t \|u_{1}\|_{L^{2}(\M)}^{2}+ t^{2} \|b\|_{L^{2}(0, T)}^{2} \int\limits_0^t \|(Bu(\tau))\|^{2p}_{L^{2p}(\M)} d\tau),
$$
since
\begin{equation*}
\begin{split}
\Big|\int\limits_0^t (t-\tau) b(\tau)(Bu(\tau))^{p} d\tau \Big|^{2} &\leq \Big(t \int\limits_0^t \Big| b(\tau)(Bu(\tau))^{p}\Big| d\tau \Big)^{2}\\
&\leq t^{2} \int\limits_0^t \Big| b(\tau)\Big|^{2} d\tau \int\limits_0^t \Big| Bu(\tau)\Big|^{2p} d\tau
\end{split}
\end{equation*}
and
$$
\int\limits_{\M}\int\limits_0^t \Big|Bu(\tau)\Big|^{2p} d\tau dx = \int\limits_0^t \int\limits_{\M}\Big|Bu(\tau)\Big|^{2p}dx d\tau = \int\limits_0^t \|Bu(\tau)\|^{2p}_{L^{2p}(\M)} d\tau.
$$

Now, using conditions on the operator $B$, we get
\begin{equation}
\label{EQ: WE-space-norm}
\|u(t)\|_{L^{2}(\M)}^{2}\leq C(\|u_{0}\|_{L^{2}(\M)}^{2} + t \|u_{1}\|_{L^{2}(\M)}^{2}+ t^{2} \|b\|_{L^{2}(0, T)}^{2} \int\limits_0^t \|u(\tau)\|^{2p}_{L^{2p}(\M)} d\tau),
\end{equation}
for some constant $C$ not depending on $u_0, u_1$ and $t$. Finally, by taking the $L^{\infty}$-norm in time on both sides of the estimate \eqref{EQ: WE-space-norm}, one obtains
\begin{equation}
\label{EQ: WE-time-space-norm}
\|u\|_{L^{\infty}(0, T; L^{2}(\M))}^{2}\leq C (\|u_{0}\|_{L^{2}(\M)}^{2} + T \|u_{1}\|_{L^{2}(\M)}^{2}+ T^{3} \|b\|_{L^{2}(0, T)}^{2} \|u\|^{2p}_{L^{\infty}(0, T; L^{2}(\M))}). 
\end{equation}

Let us introduce the set
\begin{equation}
S_c:=\{u\in L^{\infty}(0, T; L^{2}(\M)): \|u\|_{L^{\infty}(0, T; L^{2}(\M))}^{2} \leq
c(\|u_{0}\|_{L^{2}(\M)}^{2} + T \|u_{1}\|_{L^{2}(\M)}^{2})\}
\end{equation}
for some constant $c \geq 1$. Then we have
\begin{align}\label{WE-Est}
\begin{split}
\|&u_{0}\|_{L^{2}(\M)}^{2} + T \|u_{1}\|_{L^{2}(\M)}^{2} + T^{3} \|b\|_{L^{2}(0, T)}^{2} \|u\|^{2p}_{L^{\infty}(0, T; L^{2}(\M))} \\
&\leq \|u_{0}\|_{L^{2}(\M)}^{2} + T \|u_{1}\|_{L^{2}(\M)}^{2}+ T^{3} \|b\|_{L^{2}(0, T)}^{2} c^{p}(\|u_{0}\|_{L^{2}(\M)}^{2} + T \|u_{1}\|_{L^{2}(\M)}^{2})^{p}. 
\end{split}
\end{align}

Observe that, to be $u$ from the set $S_c$ it is enough to have 
\begin{align*}
\begin{split}
\|u_{0}\|_{L^{2}(\M)}^{2} &+ T \|u_{1}\|_{L^{2}(\M)}^{2}+ T^{3} \|b\|_{L^{2}(0, T)}^{2} c^{p}(\|u_{0}\|_{L^{2}(\M)}^{2} + T \|u_{1}\|_{L^{2}(\M)}^{2})^{p}\\
&\leq c(\|u_{0}\|_{L^{2}(\M)}^{2} + T \|u_{1}\|_{L^{2}(\M)}^{2}).
\end{split}
\end{align*}
It can be obtained by requiring the following
$$
T \leq T^{\ast}:=\min\left[\left(\frac{c-1}{\|b\|_{L^{2}(0, T)}^{2}c^{p}\|u_0\|_{L^{2}(\M)}^{2p-2}}\right)^{1/3}, \, \left(\frac{c-1}{\|b\|_{L^{2}(0, T)}^{2}c^{p}\|u_1\|_{L^{2}(\M)}^{2p-2}}\right)^{\frac{1}{p+2}}\right].
$$
Thus, by applying the fixed point theorem, there exists a unique local solution $u\in L^{\infty}(0, T^{\ast}; L^{2}(\M))$ of the Cauchy problem \eqref{E-NLE}.

Now we prove Part (ii). By repeating the arguments of the proof of Part (i), we start from \eqref{EQ: WE-time-space-norm}. By taking into account assumptions, we have
\begin{equation}
\label{EQ: WE-time-space-norm-2}
\|u\|_{L^{\infty}(0, T; L^{2}(\M))}^{2}\leq C (\|u_{0}\|_{L^{2}(\M)}^{2} + T^{3-2\gamma}  \|u\|^{2p}_{L^{\infty}(0, T; L^{2}(\M))}). 
\end{equation}

Fix $c \geq 1$. Introduce the set
$$
S_c:=\{u\in L^{\infty}(0, T; L^{2}(\M)): \|u\|_{L^{\infty}(0, T; L^{2}(\M))}^{2} \leq c T^{\gamma_{0}}\|u_{0}\|_{L^{2}(\M)}^{2}\},
$$
with $\gamma_{0}>0$ is to be defined later. Now, we have
\begin{align*}
\|u_{0}\|_{L^{2}(\M)}^{2} &+ T^{3} \|b\|_{L^{2}(0, T)}^{2} \|u\|^{2p}_{L^{\infty}(0, T; L^{2}(\M))} \\
&\leq \|u_{0}\|_{L^{2}(\M)}^{2} + T^{3-2\gamma+\gamma_{0}p} c^{p} \|u_{0}\|_{L^{2}(\M)}^{2p},    
\end{align*}
where $\gamma_0$ to be chosen later.

To guarantee $u\in S_c$, we require that
\begin{align*}
\|u_{0}\|_{L^{2}(\M)}^{2} + T^{3-2\gamma+\gamma_{0}p} c^{p} \|u_{0}\|_{L^{2}(\M)}^{2p} \leq c T^{\gamma_{0}} \|u_{0}\|_{L^{2}(\M)}^{2}.   
\end{align*}
Now by choosing $0<\gamma_0<\frac{2\gamma-3}{p}$ such that
$$
\tilde{\gamma}:=3-2\gamma+\gamma_{0}p<0,
$$
we obtain
$$
c^{p} \|u_{0}\|_{L^{2}(\M)}^{2p-2} \leq c T^{-\tilde{\gamma}+\gamma_{0}}.
$$
From the last estimate, we conclude that for any $T>0$ there exists sufficiently small $\|u_{0}\|_{L^{2}(\M)}$ such that IVP \eqref{E-WNLE} has a solution. It proves Part (ii) of Theorem \ref{Th: E-WNLE}.
\end{proof}

%\section*{Acknowledgment}
%The authors were supported by the FWO Odysseus 1 grant G.0H94.18N: Analysis and Partial Differential Equations. MR was also supported in parts by the EPSRC Grant EP/R003025/1, by the Leverhulme Research Grant RPG-2017-151. 

\bibliographystyle{amsplain}

\end{document}